\numberwithin{figure}{section}
\numberwithin{equation}{section}
\newtheorem{thm}[equation]{Theorem}
\newtheorem*{thm*}{Theorem}
\newtheorem{lem}[equation]{Lemma}
\newtheorem{prop}[equation]{Proposition}
\newtheorem{cor}[equation]{Corollary}
\newtheorem{lemma}[equation]{Lemma}
\theoremstyle{definition}
\newtheorem{defn}[equation]{Definition}
\theoremstyle{remark}
\newtheorem{remark}[equation]{Remark}
\newtheorem{ex}[equation]{Example}
\DeclareMathOperator{\End}{End}
\DeclareMathOperator{\Hom}{Hom}
\DeclareMathOperator{\Aut}{Aut}
\DeclareMathOperator{\isom}{Isom}
\DeclareMathOperator{\GL}{GL}
\DeclareMathOperator{\PGL}{PGL}
\DeclareMathOperator{\diag}{diag}
\DeclareMathOperator{\disc}{disc}
\DeclareMathOperator{\Pf}{Pf}
\DeclareMathOperator{\GammaL}{\Gamma L}
\DeclareMathOperator{\Gal}{Gal}
\newcommand{\M}{\mathbb{M}}
\newcommand{\bmto}{\rightarrowtail}
\newcommand{\F}{\mathbb{F}_q}
\newcommand{\la}{\langle}
\newcommand{\ra}{\rangle}
\newcommand{\tr}{{\rm tr}}
\newcommand{\pseudo}{\Psi\hspace*{-1mm}\isom}
\newcommand{\V}{\F^{\,d}}
\renewcommand{\phi}{\varphi}
\renewcommand{\leq}{\leqslant}
\renewcommand{\geq}{\geqslant}
\renewcommand{\hom}{\Hom}
\title[Isomorphism of groups of genus 2]{A fast isomorphism test for groups of genus 2}
\author{Peter A. Brooksbank}
\address{
	Department of Mathematics\\
	Bucknell University\\
	Lewisburg, PA 17837\\
	USA
}
\email{pbrooksb@bucknell.edu}
\author{Joshua Maglione}
\address{
	Department of Mathematics\\
	Colorado~State~University\\
	Fort~Collins, CO 80523\\
	USA
}
\email{maglione@math.colostate.edu}
\author{James B. Wilson}
\address{
	Department~of~Mathematics\\
	Colorado State University\\
	Fort Collins, CO 80523\\
	USA
}
\email{James.Wilson@ColoState.Edu}
\keywords{group isomorphism, pairs of forms, Pfaffian, adjoint tensor}
\subjclass{20D15, 20D45, 15A22, 20B40}
\begin{document}

%------ ABSTRACT ------%
\begin{abstract}
Motivated by the need for efficient isomorphism tests for
finite groups, 
we present a polynomial-time 
method for deciding isomorphism within a class of groups
that is well-suited to studying local properties of general finite groups.
We also report on the performance of an implementation of 
the algorithm in the computer
algebra system {\sc magma}.  
\end{abstract}

\maketitle

%=============================================================================================

%------ INTRO ------%
\section{Introduction}
\label{sec:intro}
This paper concerns the problem of testing whether 
two given finite groups are isomorphic. Work on the {\em group isomorphism problem}
has led to the development of many fundamental concepts in the modern theory of
groups -- Hall and Fitting subgroups, isoclinism, and coclass theory are some examples.
The problem itself has different aspects, ranging from
practical methods for use in the sciences \citelist{\cite{CH}\cite{ELGOB}}, 
to questions of computability~\citelist{\cite{Rabin}\cite{HL}\cite{BCGQ}\cite{LW}}, 
to the intimate but complex relationship  
it has with the {\em graph isomorphism problem}~\citelist{\cite{Miller}*{p. 132}\cite{HL}\cite{LV}*{Theorem~3.1}}.
The classification of finite simple groups, combined with natural recursive methods based 
on Sylow subgroups and the lower central series, gives a reduction to the case of $p$-groups
of exponent $p$-class $2$. Here, though, one hits a wall. 
The only general purpose techniques are variants of the 
{\em nilpotent quotient algorithm}~\citelist{\cite{OBrien}\cite{ELGOB}}, 
which in the worst cases requires $O(\exp(c_p \cdot d(G)^2))$ operations,  
where $d(G)\leq \log_p |G|$ is the size of a minimal generating set for $G$.\footnote{Most groups of order $p^n$ 
have $d(G)\approx 2n/3$~\cite{BNV:enum}*{pp. 26 \& 44}.}  
On the other hand, new techniques yield
isomorphism tests for some families of $p$-groups with unbounded
$d(G)$ that use just $O(\log^6 |G|)$ operations~\cite{LW}. 

Over a decade ago an idea emerged for a ``local-to-global" approach to isomorphism
testing of $p$-groups.  By examining many small, overlapping subgroups and quotients 
of the given groups, one aims to deduce constraints on isomorphisms between the groups themselves.  
The idea was discussed in greater detail at an Oberwolfach meeting in 2011, which in turn led to 
a collaboration of the first and third authors with E.A. O'Brien to build the infrastructure for such a 
test. 
The current work presents a nearly optimal isomorphism test for the family of groups ideal for use
as the ``local'' constituents of such a local-to-global isomorphism test. 
%These groups have also been subjects of unrelated works. Along with our algorithms 
%we  resolve several difficulties and offer substantial generalizations.

In order to present our main result in a concrete but sufficiently general setting, we assume
that groups are given, as in~\cite{KL:quo}, as quotients of permutation groups. 
Specifically,  a group $G$ is specified by sets 
$X$ and $R$ of permutations of $n$ letters, where $G=\la X\mid R\ra=\la X\ra/\la R^{\la X\ra}\ra$.
Thus, the length of an input is $\Theta((|X|+|R|)n)$, and an algorithm is {\em polynomial-time} if
it requires $O((|X|+|R|)n^c)$ operations, which can be as small as $O(\log^c |G|)$.
We prove the following result.

%%%%%
\begin{thm}
\label{thm:main1}
There is a deterministic, polynomial-time algorithm that, given groups 
$G$ and $H$, decides if $G$ is a $p$-group of exponent $p$-class $2$ and
commutator of order dividing $p^2$, and if so, decides if $G$ and $H$ are isoclinic.  
If $G$ has exponent $p$, any such isoclinism is an isomorphism.
\end{thm}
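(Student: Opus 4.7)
The plan is to reduce the isoclinism test to a pseudo-isometry problem for pairs of alternating bilinear forms, and then to solve that problem using invariants afforded by the Pfaffian and the adjoint algebra.

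\emph{Recognition of $G$.} Using standard polynomial-time routines on a permutation representation, one first computes $Z(G)$, $G'=[G,G]$, $G^p$, and the lower central series. We verify that $[G',G]=1$, that $G/Z(G)$ is elementary abelian, and that $|G'|$ divides $p^2$; if any of these tests fails, reject $G$.

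\emph{From groups to forms.} Set $V=G/Z(G)$ and $W=G'$, which are $\mathbb{F}_p$-vector spaces with $\dim W\le 2$. The commutator induces an alternating $\mathbb{F}_p$-bilinear map $b_G\colon V\times V\to W$. By Baer's correspondence, $G$ and $H$ are isoclinic exactly when the analogously constructed $b_H$ is \emph{pseudo-isometric} to $b_G$: there exist $\phi\in\GL(V)$ and $\psi\in\GL(W)$ with $\psi\circ b_G=b_H\circ(\phi\times\phi)$. Moreover, when $G$ has exponent $p$, Baer's theorem shows that such a pseudo-isometry lifts to an honest group isomorphism, so the isoclinism test already decides isomorphism in that case.

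\emph{Pseudo-isometry of form pairs.} After choosing a basis of $W$, the map $b_G$ is encoded as a pair $(A_1,A_2)$ of alternating matrices on $V$, and $b_H$ as $(B_1,B_2)$. The Pfaffian $P_G(x,y)=\Pf(xA_1+yA_2)$ is a binary form of degree $n=\tfrac12\dim V$, and a pseudo-isometry $(\phi,\psi)$ transforms $P_G$ by $\det(\phi)$ scalarly and by $\psi^{-1}$ on the variables. This splits the task in two. First, compute in polynomial time all $\psi\in\GL_2(\mathbb{F}_p)$ sending $P_G$ to a scalar multiple of $P_H$, using factorization of these binary forms over $\mathbb{F}_p$. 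Second, for each admissible $\psi$, test whether a compatible $\phi\in\GL(V)$ exists by leveraging the \emph{adjoint algebra} $\Adj(A_1,A_2)$, whose generators and structure are computable by linear algebra; this algebra captures precisely the constraints on $\phi$.

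The main obstacle is keeping the $\GL_2(\mathbb{F}_p)$-side from forcing an exponential orbit enumeration. The genus-$2$ hypothesis is exactly what makes this feasible: $P_G$ is a binary form, so classical factorization yields a polynomially small candidate set of $\psi$'s. For each candidate one must show that the adjoint algebra, together with the classical Kronecker--Weierstrass invariants of the pencil $xA_1+yA_2$, forms a \emph{complete} system of invariants for the $\phi$-action, reducing the remaining fibre test to a tractable linear-algebraic problem. Stitching these steps together, and handling the degenerate cases $\dim W\le 1$ via standard symplectic normal forms, yields the asserted deterministic polynomial-time algorithm.
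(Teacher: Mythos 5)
Your overall route is the paper's: reduce isoclinism to pseudo-isometry of a pair of alternating forms via the Baer correspondence, then decide pseudo-isometry by searching over $\GL(2,\mathbb{F}_p)$ (feasible precisely because $p$ is bounded by the input length in the permutation-quotient model) together with a fibre test for the existence of a compatible $\phi$. This is the ``Pfaffian test for small fields'' that the paper uses, supplemented by a separate, easy treatment of the genus-one cases where the centroid is $2$-dimensional over $\mathbb{Z}_p$.

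There are, however, two genuine gaps. First, the invariant you propose --- the Pfaffian $P_G(x,y)=\Pf(xA_1+yA_2)$ of the whole pencil --- is not the right one. The correct criterion (Theorem~\ref{thm:det-method}) is that a given $\hat{\alpha}\in\GammaL(2,\mathbb{F}_p)$ lifts if and only if the two fully refined orthogonal decompositions have the same number of summands and the \emph{multisets} of Pfaffians of the indecomposable summands match term by term, up to a permutation and scalars. The product of these Pfaffians forgets the partition: one summand with slope $C(a(x)^2)$ and two summands each with slope $C(a(x))$ give the same global Pfaffian but non-pseudo-isometric bimaps. Worse, whenever the pair has a flat indecomposable summand (these have odd dimension, so they occur not only when $\dim V$ is odd), every combination $xA_1+yA_2$ is degenerate and $P_G$ vanishes identically; such summands must be split off first, and are handled by noting that they are determined up to isometry by their dimensions and that every $\psi$ lifts on them (Proposition~\ref{prop:pseudo-flat}). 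Your ``degenerate cases $\dim W\le 1$'' clause does not cover this.

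Second, and more seriously, the completeness of the invariant system --- the statement that for each admissible $\psi$ a compatible $\phi$ exists whenever the (per-summand) invariants match --- is exactly the point you flag as ``one must show,'' and it is the mathematical heart of the theorem rather than a routine verification. Its proof requires the transitivity of the autoclinism group on fully refined central decompositions of genus-$2$ groups (Theorem~\ref{thm:transitive}); no Krull--Schmidt-type theorem is available here (Example~\ref{subsec:special-ex}), so this transitivity is what makes the decomposition-wise comparison legitimate. The converse direction then constructs the lift of $\hat{\mu}$ from the Kronecker--Dieudonn\'e classification, after factoring $\hat{\mu}$ into triangular and permutation pieces via an LUP decomposition. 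Without supplying this argument, your procedure is a collection of necessary conditions rather than a decision algorithm.
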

%%%%%

We shall eventually state and prove a stronger result (Theorem~\ref{thm:gp-alg}).   
This concerns
a broader family of groups that we call {\em genus} 2 (borrowing the term from Knebelman's 
work on  Lie algebras). 
Roughly speaking, these are direct products of exponent $p$-class 2 groups 
whose commutator structure may be encoded over some extension 
$\mathbb{F}_q$ of $\mathbb{Z}_p$, and whose commutator subgroup is isomorphic to 
$\mathbb{F}_q\times\mathbb{F}_q$. In particular,
there is no bound on the minimum number of generators
either of the genus 2 groups themselves, or of their commutator subgroups.  

Our methods also apply to a broader range of computational models
than the permutation group quotient model stated in Theorem~\ref{thm:main1}.
There are several standard models for computation with groups, such as
matrix groups and polycyclic presentations,
and our results apply (with minor qualifications) equally well to all of them. 
These models are discussed at greater length in Section~\ref{sec:models},
but it's important to stress two things -- first, that the order of a group is usually 
{\em exponential in its input length}, and secondly that the complexity of our
algorithms is polynomial {\em in the input length}, not in the order of the group.

\subsection{Computer implementation}
\label{sec:imp-intro}
As our principal objective in the local-to-global project is to produce 
practical algorithms for computing with $p$-groups,
we developed an implementation of the algorithm underlying Theorem~\ref{thm:main1}
in the computer algebra system {\sc magma}~\cite{magma}. Further details of the
implementation are given in Section~\ref{sec:imp}, but to illustrate the efficacy
of our methods, a sample of runtimes for 5-groups is given in
Figure~\ref{fig:plot}.  

\begin{figure}[!htbp]
\input{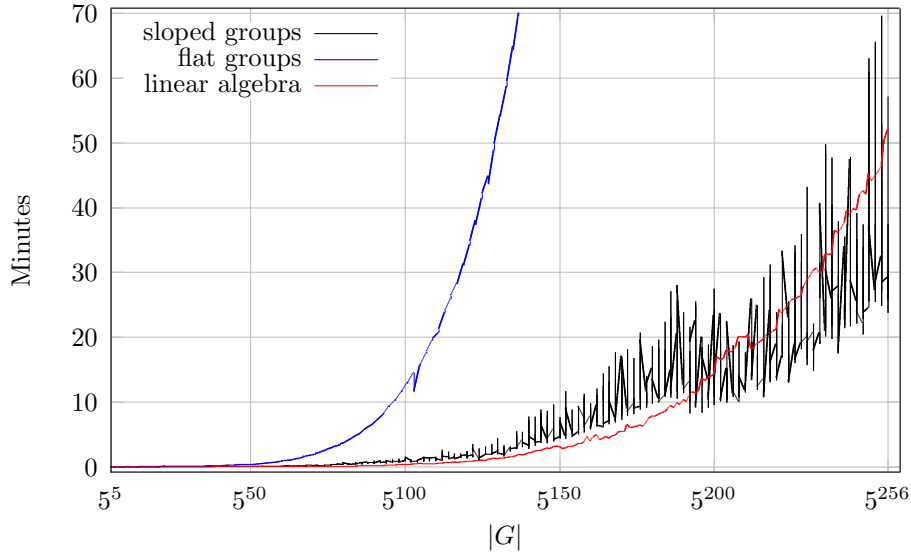}
\caption{Performance data for tests to confirm isomorphism
between groups of order $5^{d+2}$ for increasing $d$. The
``linear algebra" plot is the time needed by {\sc magma} to solve
linear $d^2\times d^2$ systems, shown here to track with our runtimes for
sloped genus 2 groups.}
\label{fig:plot}
\end{figure}

Curiosity led us to this demonstration -- we wanted to push
the orders of the input groups as high as possible while successfully
testing isomorphism within one hour.
We constructed 1260 random 5-groups of genus 2 having
orders ranging from $5^5$ to $5^{256}$, and generated for each
a random isomorphic copy. We then tasked our
implementation with finding an explicit isomorphism between
each pair of groups and plotted the completion time on the graph.
We intended to compare the performance of our implementation 
with that of default functions in {\sc magma}, but the 500GB
of memory on one of our machines was insufficient for the latter
to decide isomorphism even for groups of order $5^7$. Our algorithm
required less than 200 MB.

As the graphic in Figure~\ref{fig:plot} suggests, genus 2 groups come in two flavors --
``sloped" and ``flat". These terms will soon be defined precisely, but it suffices now to say
that in our experiment a group of order $5^d$ is always flat if $d$ is odd, 
and is generically sloped if $d$ is even. Although the performance evidently varies according
to type, both methods track with the cost of solving systems of  linear equations in approximately $d^2$ equations
and varaibles.
This is fairly clear for the sloped groups from Figure~\ref{fig:plot}; more 
refined graphics in Section~\ref{sec:imp} show that the same is true for
flat types. These data support the claim proved later in the paper that the asymptotic complexity of our algorithms
is $O(d^{2\omega})$, where $2\leq \omega<3$ is the exponent of matrix multiplication; cf. \cite{vzG}*{Chapter 12}.

We mention, finally, that we only aborted the experiment 
when we found that certain functions in {\sc magma} do not yet handle 
$p$-groups of order larger than $p^{256}$.  We view this as confirmation 
that our algorithms are adequate for practical use.

\subsection{Attacking the general isomorphism problem.}
The details of how the groups in Theorem~\ref{thm:main1} will be used within the 
local-to-global isomorphism test are the subject of a forthcoming article~\cite{BOW}, 
so we give just a brief summary of the properties that make them well-suited.
The difficulty with $p$-groups of class 2 is dealing with many 
commutator relations.  
If we work only with abelian quotients, we lose all commutator information.
Extraspecial quotients are the obvious class to look at next, 
but these groups have only two isomorphism
types (for any fixed order $p^{1+2m}$) and hence do not capture sufficient variability.  
Using groups with central
commutator isomorphic to $\mathbb{Z}_p\times \mathbb{Z}_p$ we can record 
commutator relations as points on a projective line. This, as we shall see,
admits surprising variability, but not so much as to make the problem intractable.

We expect to repeat local analysis many times within one global 
problem. We therefore need algorithms for the local versions to be extremely fast, and 
Figure~\ref{fig:plot} suggests they are. From a theoretical viewpoint,
the algorithm in Theorem~\ref{thm:main1} has 
%worst case complexity $O(p^3+\log^5 |G|)$ and average complexity $O(\log^5 |G|)$.  
complexity $O(d^{2\omega})$, where
$d=\log_p |G|-2$. This  
represents an exponential speed-up over existing $O(\exp(c d^2))$-time algorithms.
Thus, notwithstanding the constrained class of groups it handles,
Theorem~\ref{thm:main1} dramatically exceeds expectations.

\subsection{Classification problems}
Interest in the class of groups of Theorem~\ref{thm:main1} extends beyond isomorphism testing.
Perhaps most notable is that its
classification problem lies, in a technical sense, on the cusp of tractability. 
More precisely, if a classification up to isomorphism of a collection of objects
would imply the classification of finite-dimensional modules over the free 
$k$-algebra $k\la x,y\ra$, then the problem is considered {\em wild}; otherwise it is 
{\em tame}.  
While $k\la x,y\ra$-modules may sound obscure,
their classification would imply that of all finite-dimensional modules of
all finite-dimensional algebras -- clearly a wild problem in any sense of the term.  

As abelian groups and extraspecial groups are classified, these are natural examples of families
with a tame classification problem.
On the other hand, Vishnivetski{\u\i} showed in~\cite{Vish:1} that  
the classification problem for the groups in Theorem~\ref{thm:main1} is tame, but
not by giving a classification (this is likely a very hard problem).
%\footnote{The intimately related problem
%of classifying nilpotent Lie algebras $L$ with $L'$  of dimension $2$  
%was only recently completed for the case when
%$\dim L=6$ \citelist{\cite{Morozov}\cite{CdGS}}.}).
If the constraints on the groups in Theorem~\ref{thm:main1} are relaxed in mild ways, their classification 
problem becomes wild. For instance, the problem is wild
if the exponent $p$ condition is removed~\cite{Sergeichuk}. In another direction, 
the classification of exponent $p$ groups with central commutator subgroup  
$\mathbb{Z}_p\times\mathbb{Z}_p\times\mathbb{Z}_p$ is also wild~\citelist{\cite{BLS}\cite{BDLST}}.

It is not known if groups of genus $2$ over arbitrary fields are wild or tame, and we certainly
do not come close to a full classification here. Crucial to our algorithms, however, is a
canonical representation of the genus $2$ groups that are not central products of 
proper nontrivial subgroups.  We prove the following result in Section~\ref{sec:genus2}.

%%%%%
\begin{thm}
\label{thm:indecomps}
Let $G$ be a centrally indecomposable $p$-group of genus $2$ over a field $k$. 
Then $G$ is isoclinic to
one of the following two types of groups:
\begin{enumerate}[(i)]
\item a central quotient of a Heisenberg group,
\begin{align*}
	H &  =\left\{ \begin{bmatrix}
	1 & e & w \\ 0 & 1 & f \\ 0 & 0 & 1
	\end{bmatrix} : \begin{array}{c} e,f,w\in k[x]/(a(x)^c),\\ a(x) \textnormal{ irreducible } \end{array}\right\},
\end{align*} by a subgroup $L\leq [H,H]\cong k^{m}$, $m=c\deg m(x)$, which is a $k$-subspace of codimension $2$; or

\item the matrix group
\begin{align*}
	H^{\flat} & = 
	\left\{
	\left[\begin{array}{c|c|c}
		I_2 & \begin{array}{cccc} 
			e_1 & \cdots & e_m & 0 \\
			0 & e_1 & \cdots & e_m
		\end{array} & 
		\begin{array}{c} z_1 \\ z_2 \end{array}\\
		\hline		
		 & I_{m+1} & \begin{array}{c} f_0\\ \vdots \\ f_{m}\end{array}\\
		 \hline
		 & & 1
	\end{array}\right] : e_i, f_j,w_{\ell} \in k\right\}.
\end{align*} 
\end{enumerate}
\end{thm}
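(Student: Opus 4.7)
The plan is to reduce the classification to a problem in linear algebra about pencils of alternating bilinear forms, and then to invoke the classical Kronecker--Weierstrass theory adapted to the alternating setting. The first step is to use isoclinism to replace $G$ by the datum of its commutator map. Writing $V = G/Z(G)$ and $W = [G,G]$, the commutator descends to an alternating $k$-bilinear map $\Phi\colon V\times V \to W$, and two $p$-groups of genus $2$ over $k$ are isoclinic iff their commutator maps are equivalent under the action of $\GL_k(V)\times \GL_k(W)$. Since $\dim_k W = 2$, picking a basis of $W^{*}$ presents $\Phi$ as a pencil $\lambda b_1+\mu b_2$ of alternating $k$-forms on $V$, and central indecomposability of $G$ translates exactly into indecomposability of this pencil: no nontrivial splitting $V = V_1\oplus V_2$ can be simultaneously $b_1$- and $b_2$-orthogonal.

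Next I would invoke the classification of indecomposable pencils of alternating bilinear forms over $k$. In the Kronecker--Weierstrass normal form for alternating pencils, every indecomposable pencil is equivalent to one of two kinds of blocks: a \emph{regular} block, indexed by an irreducible polynomial $a(x)\in k[x]$ together with an exponent $c\geq 1$, whose endomorphism algebra is the local $k$-algebra $R = k[x]/(a(x)^{c})$; or a \emph{singular} Kronecker block, one for each odd dimension $2m+3$. The regular blocks produce pencils that are generically nondegenerate --- these are the \emph{sloped} pencils --- while the singular blocks are exactly the \emph{flat} pencils.

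For the regular case I would exhibit the Heisenberg group $H$ over $R$ as the model class-$2$ nilpotent group whose commutator is the natural $R$-valued alternating pairing $((e,f),(e',f'))\mapsto ef'-e'f$, so that $[H,H]\cong R$ as a $k$-vector space of dimension $m = c\deg a(x)$. Any $k$-linear surjection $R\twoheadrightarrow k^{2}$, equivalently any codimension-$2$ subspace $L\leq [H,H]$, pushes the $R$-valued pairing to an alternating $k$-valued pair $(b_1,b_2)$, and a direct computation shows that the resulting pencil on $V = R^{2}$ is precisely the regular indecomposable attached to $(a(x),c)$. Conversely, every regular indecomposable pencil is realized by such a quotient, yielding case (i); the calculation is routine since $R$ is a chain ring.

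Finally, for the singular case I would check directly that the matrix group $H^{\flat}$ realizes the Kronecker singular block of odd dimension $2m+3$. Taking the obvious generators given by the $e_i$, $f_j$, $z_\ell$ entries and computing commutators, the two staggered rows of $e_i$'s in the top-right block produce exactly the pair of shift-staircase alternating forms defining the singular indecomposable, and the centre is spanned by $z_1,z_2$, giving a $2$-dimensional commutator as required. The main obstacle, and the most technical ingredient of the proof, is the classification of indecomposable alternating pencils itself --- in particular handling characteristic~$2$, where the passage between skew-symmetric and alternating forms must be made with care --- since everything else amounts to a direct dictionary between canonical pencil forms and the two group constructions.
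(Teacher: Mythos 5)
Your skeleton agrees with the paper's up to the sloped case: both arguments pass via the Baer correspondence to an indecomposable pair of alternating forms, both obtain the sloped/flat dichotomy from the Kronecker--Dieudonn\'e classification (the paper packages this as Proposition~\ref{prop:slope-flat}, first splitting $V=E\oplus F$ into common totally isotropic subspaces via Lemma~\ref{lem:isotropics} and then applying Theorem~\ref{thm:Dieudonne} to the corner blocks --- which, incidentally, is how the characteristic-$2$ issue you worry about is sidestepped), and both handle the flat case by direct computation with $H^{\flat}$. Where you genuinely diverge is how the sloped indecomposable is tied to a Heisenberg quotient. The paper proves Theorem~\ref{thm:tensor-bimap}, identifying $k^{2n}\wedge_{A(\circ)}k^{2n}$ with $R=k[x]/(a(x)^{c})$ as a fully nondegenerate alternating $R$-form, so that $\circ_G$ factors through $\circ_{H(R)}$ by the universal property of the tensor product over the adjoint ring; this extra machinery is then reused in the pseudo-isometry algorithms. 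Your route --- realize the canonical regular pencil directly as a codimension-$2$ projection of the pairing $(e,f)\mapsto ef'-e'f$ on $R^{2}$ --- is more elementary and does work, but the step you call ``routine'' is the entire content and should be written out: take $\lambda_{1}\colon R\to k$ not vanishing on the socle $a(x)^{c-1}R$ (so that $(e,f)\mapsto\lambda_{1}(ef)$ is nondegenerate) and $\lambda_{2}=\lambda_{1}\circ(\text{multiplication by }x)$; the Gram matrices are then $\Lambda_{1}$ and $C(a(x)^{c})\Lambda_{1}$, and a basis change by $\Lambda_{1}^{-1}$ on $F$ yields exactly the pair $\{I,C(a(x)^{c})\}$ of Theorem~\ref{thm:Dieudonne}(i).

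One assertion in your sloped paragraph is false and must be deleted: you claim that \emph{any} $k$-linear surjection $R\to k^{2}$ pushes the Heisenberg pairing to the regular indecomposable pencil attached to $(a(x),c)$. If that were true, all codimension-$2$ central quotients of $H(R)$ would be pairwise isoclinic; Example~\ref{ex:strange} exhibits two such quotients of isomorphic Heisenberg groups that are not isomorphic, and the whole point of the Pfaffian invariant of Theorem~\ref{thm:det-method} is that different choices of $L$ give inequivalent pencils. The theorem only needs the converse direction --- that the one pencil isometric to $\circ_G$ is realized by \emph{some} quotient --- so the error is excisable, but as written it would license a wrong conclusion about the sloped case.
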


The relationship between complexity of classification and computational complexity 
is examined in a recent work of Lipyanski and Vanetik~\cite{LV}. They provide, in particular, 
a summary of known connections between wild and tame problems and graph isomorphism.

%%%%%

\subsection{Taming the groups of genus 2}
\label{subsec:prop-overview}
The reader familiar with $p$-groups and algorithms may already have noticed the intimate
connection, which will be further elucidated in Sections~\ref{sec:bimaps} and~\ref{sec:genus2},
between groups of genus 2 and pairs $\{\Phi_1,\Phi_2\}$ of alternating
forms over a finite field $\mathbb{F}_q$. Indeed, we use the classification of 
such pairs by Bond~\cite{Bond} and Scharlau~\cite{Scharlau} 
(who themselves use an earlier classification of pairs of matrices --
or {\em Kronecker modules} as they are known -- by
Kronecker and Dieudonn\'{e}~\cite{Dieudonne}).  However, we also exploit a recently
discovered Galois connection between adjoints of bilinear maps and tensor 
products~\citelist{\cite{Wilson:division}\cite{BW:autotopism}}
to prove Theorem~\ref{thm:indecomps}.

By itself, Theorem~\ref{thm:indecomps} is not sufficiently powerful to
decide isomorphism among genus 2 groups (not even if we restrict to 
centrally indecomposable groups). 
There exist non-isomorphic groups of genus 2 whose centrally 
indecomposable factors are isomorphic (see Example~\ref{subsec:special-ex}). 
Hence, theorems of Krull-Remak-Schmidt type, upon
which the classification of Kronecker modules depends, simply do not
exist for groups of genus 2. Nevertheless, we prove a transitivity result
on fully-refined central decompositions of groups of genus 2 (Theorem~\ref{thm:transitive})
that serves as the foundation for an isomorphism invariant 
(strictly speaking, an {\em isoclinism} invariant) 
based on a generalization of the Pfaffian of an alternating form. 
The resulting characterization of isomorphism classes -- presented
in Theorem~\ref{thm:det-method} in terms of bilinear maps --
leads to an isomorphism test that is effective when $\mathbb{F}_q$ is small. 

When $\mathbb{F}_q$ is large, we use a general
technique for isomorphism testing in groups of exponent $p$-class 2.
Dubbed the {\em adjoint-tensor method}, this technique was
proposed in~\cite{BW:autotopism} by the first and third authors as
a means of bridging the gap between the generic but typically slow 
nilpotent quotient algorithms, and incredibly fast but highly specialized 
isomorphism tests such as the one in~\cite{LW}.
The adjoint-tensor method is presented in mildly restricted form in Section~\ref{sec:adjten}.
It requires the user to solve several problems -- such as  {\em algebra conjugacy}, {\em algebra normalizer},
and {\em subspace transporter} -- that are known in their general forms
to be hard.  With some considerable effort, however, the constraints imposed by
the genus 2 assumption may be exploited to overcome each obstacle,
and ultimately to produce an efficient test for isomorphism.
The details of the test, which include effective methods for
computing with certain quotients of the notorious Nottingham group,
are presented in Sections~\ref{sec:iso-ind-genus2} through~\ref{sec:iso-auto}.

%=============================================================================================
\section{Nilpotent Groups and Bimaps}
\label{sec:bimaps}

We describe the relationship between groups of nilpotence class $2$ and bilinear maps.  
This has a long history going back to  work of Brahana and Baer in the 1930's. 
Henceforth, all groups are finite.

%%%
\subsection{Bimaps}
\label{subsec:bimaps}
Let $k$ be a commutative ring, and $U,V,W$ (left) $k$-modules.
A {\em $k$-bilinear map}, which we abbreviate to {\em $k$-bimap}, is a function
$\circ \colon U\times V\bmto W$ such that, for all $u,u'\in U$, $v,v'\in V$, 
and $\alpha\in k$, 
\begin{align}
\label{eq:bimap}
(u+\alpha u') \circ  v &= u\circ v + \alpha( u'\circ v ), \\
u \circ (v + \alpha v') &= u\circ v + \alpha( u\circ v').
\end{align}
The \emph{radicals} of $\circ$ are $U^{\bot}=\{ v\in V \colon U\circ v = 0 \}$, 
$V^{\top}=\{u\in U: u\circ V=0\}$, and $W^{+}=W/(U\circ V)$.
We say $\circ$ is {\em fully nondegenerate} if all three radicals are trivial.
If $U=V$ and $v\circ v = 0$ for all $v$, then $\circ$ is {\em alternating}.
We reserve the use of $U$, $V$ and $W$ for these three variables of a bimap
and write $U_{\circ}$, $U_{\bullet}$, and so forth if we need to distinguish between 
these components for separate bimaps $\circ$, $\bullet$.

A {\em homotopism} between bimaps $\bullet\colon U_{\bullet}\times V_{\bullet}\bmto W_{\bullet}$ and $\circ \colon U_{\circ}\times V_{\circ}\bmto W_{\circ}$ 
is a triple $f=(f_U,f_V;f_W)\in \hom(U_{\bullet},U_{\circ})\times \hom(V_{\bullet},V_{\circ})\times \hom(W_{\bullet},V_{\circ})$ such that
\begin{align} 
\label{eq:pseudo-isometry}
(\forall & u\in U_{\bullet},\forall v \in V_{\bullet}) &
 u f \circ v f = (u\bullet v)f.
\end{align}
When working with such a triple of maps, writing $uf$ for $u\in U$ means $uf_U$,
whereas $vf$ for $v\in V$ means $vf_V$, and so on. This occurs in one or two 
other places in the paper.
Bimaps with homotopisms form a natural category \cite{Wilson:division}.
A homotopism in which all maps are invertible is an {\em isotopism}.
We typically work here with alternating bimaps, and for such bimaps
we shall further insist that $\phi_U=\phi_V$ and refer to an isotopism 
between $\bullet$ and $\circ$ as a {\em pseudo-isometry}. 

When we need to describe a bimap in an example -- or for computation -- we 
do so via matrices. Fix generating sets $X,Y,Z$ for $U,V,W$, respectively, as $k$-modules. 
For $x\in X$, $y\in Y$, there exist $\alpha_{xyz}\in k~(z\in Z)$ 
such that
\begin{align*}
	 x\circ y &= \sum_{z\in Z} \alpha_{xyz} z.
\end{align*}
The scalars $\alpha_{xyz}$ are called
\emph{structure constants}
of $\circ$ relative to $X,Y,Z$.  
When $k$ is a field, these constants are uniquely determined by the choices of 
$X,Y,Z$ and we record the data using matrices $\Phi_z=[[\alpha_{xyz}]]$,
where $z\in Z$ and each $\Phi_z$ is an $|X|\times |Y|$ matrix. 
When $\circ$ is alternating, each $\Phi_z$ represents an alternating form on $U=V$, and
$\left\{\Phi_z\colon z\in Z\right\}$ is commonly known as a {\em system of forms}~\cites{B-F,BO}.

%Going forward we will not be particular about the placement of subscripts or superscripts in the enumeration
%of systems of forms.  It is invariably necessary to change perspectives, either to accommodate inverses and exponents
%or indicate decompositions into blocks, etc.  So we make the choices appropriate for the context. 

%%%
\subsection{Isoclinism and isomorphism of groups}
One can associate to each nilpotent group $G$ of class 2 an alternating bimap $\circ_G$.
Equivalence of such bimaps up to pseudo-isometry corresponds to an equivalence
of groups that is in general weaker than isomorphism. This equivalence was introduced
by Philip Hall and is known as {\em isoclinism}. The relationship between 
isoclinism and isomorphism for groups
is akin to that between homotopy and homeomorphism for topological spaces.
\medskip

Commutation in a group is a function 
$[,]\colon G\times G\to G$ whose image is not, in general, a subgroup of $G$. 
However, the subgroup generated by this image is the 
{\em commutator subgroup} and is denoted $[G,G]$ or $G'$.
Commutation is also not a homomorphism and hence has no kernel.
However, the {\em center} of $G$, namely
$Z(G)=\{g\in G\colon [G,g]=[g,G]=1\}$  consists of those elements
that do not influence the outcome of commutation.  
Thus,
we reduce $[,]$ to the \emph{commutation word map}
\[
\begin{array}{rccc}
	\circ_G \colon  & G/Z(G) \times G/Z(G) & \to &  G'\\ 
	& (\bar{x},\bar{y} ) & \mapsto &  [x,y],
\end{array}
\]
where $\bar{x}$ denotes the coset $xZ(G)$.
Comparing groups $G$ and $H$ only up to their commutation structures is therefore comparing
the functions
$\circ_G$ and $\circ_H$.  Doing so requires homomorphisms $f\colon G/Z(G)\to H/Z(H)$
and $\hat{f}\colon G'\to H'$ 
such that
\begin{align}
	(\forall & x,y\in G) & \bar{x}f\circ_H \bar{y}f & = (\bar{x}\circ_G\bar{y})\hat{f}.
\end{align}
The pair $(f,\hat{f})$ is a \emph{homoclinism} and, if the pair is invertible, it is an \emph{isoclinism}.  
Groups up to homoclinism
form a category with all the expected properties.

\begin{remark}
One can replace commutation with any word $w(x_1,\dots,x_n)$, producing a word map
$w\colon (G/w^*(G))^n\to w(G)$ where $w(G)$ is the \emph{verbal subgroup} generated by all evaluations of $w$
and $w^*(G)$ is the \emph{marginal subgroup} consisting of elements that do not influence the evalations
of $w$.  The associated equivalence up to $w$ is known as a {\em $w$-isologism}.  
Isoclinism is therefore the special case of $[x,y]$-isologism.
\end{remark}

In \cite{Baer:correspondence}, 
Baer established a fundamental correspondence for class 2 nilpotent groups
that may already be evident from the foregoing. 
 
\begin{thm}[Baer correspondence]
\label{thm:baer}
If $[G,G]\leq Z(G)$ then $\circ_G$ is a fully nondegenerate alternating $\mathbb{Z}$-bimap.  
Further, two groups $G$ and $H$ of nilpotence class $2$ are isoclinic if, and only if, 
$\circ_G$ and $\circ_H$ are pseudo-isometric.
\end{thm}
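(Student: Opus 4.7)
The plan proceeds in two independent stages: first, verifying that when $[G,G]\leq Z(G)$ the commutation word map descends to a fully nondegenerate alternating $\mathbb{Z}$-bimap $\circ_G \colon G/Z(G)\times G/Z(G)\bmto G'$; second, showing that this assignment carries isoclinisms to pseudo-isometries, and vice versa.

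For the first stage, I would invoke the standard commutator identities $[xy,z]=[x,z]^{y}[y,z]$ and $[x,yz]=[x,z][x,y]^{z}$. Because $[G,G]\leq Z(G)$, every conjugation acting on a commutator is trivial, so these collapse to $[xy,z]=[x,z][y,z]$ and $[x,yz]=[x,y][x,z]$. Since $G/Z(G)$ and $G'$ are both abelian, I would write them additively; the two displayed identities then express biadditivity, which over $\mathbb{Z}$ is precisely $\mathbb{Z}$-bilinearity. Well-definedness on cosets is the observation $[xc,y]=[x,y]$ whenever $c\in Z(G)$; the alternating property is the tautology $[x,x]=1$. Full nondegeneracy is equally direct: a coset $\bar y$ lies in the left radical iff $[G,y]=1$ iff $y\in Z(G)$ iff $\bar y=0$, the right radical is symmetric, and the cokernel radical $W^{+}$ vanishes because the subgroup generated by the image of $\circ_G$ is by definition $[G,G]=G'$.

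For the second stage, the isoclinism condition
\[
\bar{x}f\circ_H \bar{y}f \;=\; (\bar{x}\circ_G\bar{y})\hat f
\]
is literally the defining relation for the triple $(f,f;\hat f)$ to be a homotopism between $\circ_G$ and $\circ_H$; invertibility of $(f,\hat f)$ as a homoclinism corresponds to invertibility of the triple as a pseudo-isometry. Because the abelian groups $G/Z(G)$, $H/Z(H)$, $G'$, and $H'$ are the same objects whether viewed as groups or as $\mathbb{Z}$-modules, the two notions of isomorphism coincide, and the equivalence follows with no extra work in either direction.

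The argument is almost entirely definitional; the single point requiring real care is the identification of the cokernel radical $W^{+}$. One must read ``$U\circ V$'' in its definition as the subgroup (equivalently, $\mathbb{Z}$-submodule) generated by the image of $\circ$ rather than as the set-theoretic image, and then invoke the characterization of $G'$ as the subgroup generated by commutators. Once that convention is fixed, the rest is a verification of bookkeeping between two equivalent languages -- commutator words on one side, bilinear maps on the other.
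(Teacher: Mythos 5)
Your argument is correct and is exactly the definitional verification the paper has in mind: the paper gives no proof of its own, citing Baer and remarking that the result ``may already be evident from the foregoing,'' and your two stages (collapsing the commutator identities $[xy,z]=[x,z]^y[y,z]$ under $[G,G]\leq Z(G)$ to get biadditivity, then matching the homoclinism condition verbatim against the homotopism condition) supply precisely the missing routine details. Your one flagged point of care --- reading $U\circ V$ in the definition of $W^{+}$ as the generated subgroup so that $W^{+}=G'/[G,G]=0$ --- is indeed the only place where a convention must be fixed, and you fix it correctly.
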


The next crucial observation follows from the Universal Coefficients Theorem 
applied to group cohomology. (Direct proofs
are also known; see, for example, \cite{Wilson:unique-cent}*{Proposition 3.10}.)

\begin{prop}
\label{prop:isoclinic-isomorphic}
Two $p$-groups of nilpotence class $2$ and exponent $p$ are isoclinic if, and only if, they are isomorphic.
\end{prop}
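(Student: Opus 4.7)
The forward implication is immediate, since any isomorphism $G \to H$ induces an isoclinism by passing to the quotient $G/Z(G) \to H/Z(H)$ and restricting to $[G,G] \to [H,H]$. For the converse, the plan is to lift a given isoclinism $(f, \hat f)$ to a group isomorphism, by realizing both $G$ and $H$ as central extensions of $\mathbb{F}_p$-vector spaces whose classifying cohomology class is controlled entirely by the commutator bimap.

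Write $V = G/Z(G)$ (identified via $f$ with $H/Z(H)$) and view $G$ as a central extension $1 \to Z(G) \to G \to V \to 1$, classified by $[G] \in H^2(V, Z(G))$; likewise for $H$. For an $\mathbb{F}_p$-vector space $V$ and a trivial $\mathbb{F}_p$-module $A$, the Universal Coefficients Theorem produces a naturally split short exact sequence
$$
0 \to \mathrm{Ext}^1_{\mathbb{Z}}(V, A) \to H^2(V, A) \to \Hom(\Lambda^2 V, A) \to 0.
$$
A direct cocycle computation identifies the image of $[G]$ on the right with the commutator bimap $\circ_G$, postcomposed with the inclusion $[G,G] \hookrightarrow Z(G)$, and identifies its $\mathrm{Ext}^1$ preimage with the $p$-power map $v \mapsto \tilde v^{\,p}$ from $V$ to $Z(G)$. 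This map is well-defined because $Z(G)$ has exponent $p$, and it is additive because $p$ is odd and $G$ has class $2$.

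The decisive step invokes the exponent-$p$ hypothesis: in $G$ every lift satisfies $\tilde v^{\,p} = 1$, so the $\mathrm{Ext}^1$ component of $[G]$ vanishes, and $[G]$ is recoverable from $\circ_G$ together with the embedding $[G,G] \hookrightarrow Z(G)$; the same holds for $H$. (The case $p = 2$ is vacuous, since exponent $2$ forces abelianness.) Now extend $\hat f \colon [G,G] \to [H,H]$ to an isomorphism $Z(G) \to Z(H)$, which is available because $Z(G)/[G,G]$ and $Z(H)/[H,H]$ are $\mathbb{F}_p$-vector spaces of matching dimension once the orders of $G$ and $H$ agree. Under the resulting pair $(f, \hat f_{\mathrm{ext}})$ the classes $[G]$ and $[H]$ coincide inside $\Hom(\Lambda^2 V, Z)$, producing an equivalence of central extensions and hence an isomorphism $G \to H$ that induces the original isoclinism.

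The main technical obstacle is the UCT identification itself — in particular, verifying that the $\mathrm{Ext}^1$ summand genuinely records the $p$-power data while $\Hom(\Lambda^2 V, A)$ records the alternating commutator form. Once these identifications are in place, the remainder is routine bookkeeping: choose a linear complement to $[G,G]$ inside $Z(G)$, transport $\hat f$ across it, and convert the equality of cohomology classes into a concrete group isomorphism.
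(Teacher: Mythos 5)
Your strategy is the one the paper itself gestures at: the paper offers no proof beyond the remark that the statement ``follows from the Universal Coefficients Theorem applied to group cohomology'' together with a citation, and the cohomological bookkeeping you supply is essentially right. In particular, identifying the image of $[G]$ in $\Hom(\Lambda^2 V, Z(G))$ with the commutator bimap, identifying the $\mathrm{Ext}^1$ summand with the $p$-power map (well defined and additive for $p$ odd in class $2$), and using exponent $p$ to kill the $\mathrm{Ext}^1$ component are all correct, as is the final passage from equality of extension classes to an isomorphism inducing the given isoclinism.

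The gap is the step where you extend $\hat f\colon [G,G]\to[H,H]$ to an isomorphism $Z(G)\to Z(H)$ ``because $Z(G)/[G,G]$ and $Z(H)/[H,H]$ are $\mathbb{F}_p$-vector spaces of matching dimension once the orders of $G$ and $H$ agree.'' Nothing in the hypotheses makes those orders agree: an isoclinism matches $G/Z(G)$ with $H/Z(H)$ and $[G,G]$ with $[H,H]$ but is completely blind to $Z(G)/[G,G]$. Concretely, for $p$ odd the extraspecial group $G$ of order $p^3$ and exponent $p$ and the group $G\times\mathbb{Z}_p$ are isoclinic, both of class $2$ and exponent $p$, yet not isomorphic; so the step cannot be repaired from the stated hypotheses, and the proposition as literally worded needs a further assumption. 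Your argument is correct exactly under the additional hypothesis $|G|=|H|$ (equivalently $\dim Z(G)/[G,G]=\dim Z(H)/[H,H]$), which is how the result is actually used in the paper, where the two input groups can be assumed to have equal order, and is also what holds automatically in the stem case $Z(G)=[G,G]$. You should state that hypothesis explicitly and flag where it enters, rather than folding it into the phrase ``once the orders agree.''
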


In view of Theorem~\ref{thm:baer} and Proposition~\ref{prop:isoclinic-isomorphic} it makes sense,
for a fixed alternating bimap $\circ\colon V\times V\bmto W$, to consider the 
{\em pseudo-isometry} group
\begin{align}
\label{eq:pseudo-isometry-group}
\pseudo(\circ) &= \{ (\phi,\hat{\phi}) \in \Aut(V)\times \Aut(W) \colon 
\forall u,v\in V, u\phi\circ v\phi = (u\circ v)\hat{\phi}\}.
\end{align}

The following observation, which reduces questions of isomorphisms and automorphisms of groups 
to ones about bimaps, is proved in \cite{Wilson:unique-cent}*{Proposition 3.8}.

\begin{prop}
\label{prop:autoclinism}
If $G$ is nilpotent of class $2$, and ${\rm Aut}(G)$ its group of automorphisms, then
\begin{align*}
\xymatrix{
	1 \ar[r] &  C_{\Aut(G)}(Z(G)) \ar[r]^{\iota} & 	\Aut(G)\ar[r]^{\pi} & \pseudo(\circ_G)
}
\end{align*}
is an exact sequence.
If $G$ has exponent $p$ then $\pi$ is surjective.
\end{prop}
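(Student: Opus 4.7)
The plan is to construct $\pi$ directly, identify its kernel as $C_{\Aut(G)}(Z(G))$, and then establish surjectivity in the exponent-$p$ case by producing an explicit lift via the central-extension data.

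First I would define $\pi$. Both $Z(G)$ and $G'$ are characteristic, so any $\phi\in\Aut(G)$ descends to $\bar\phi\in\Aut(G/Z(G))$ and restricts to $\phi|_{G'}\in\Aut(G')$. The commutator identity $\phi([x,y])=[\phi(x),\phi(y)]$ is literally the pseudo-isometry condition, so $\pi(\phi):=(\bar\phi,\phi|_{G'})$ lies in $\pseudo(\circ_G)$ and $\pi$ is a homomorphism. To identify $\ker\pi$, I would note that $\bar\phi=1$ is equivalent to $\phi(x)x^{-1}\in Z(G)$ for every $x$; this automatically forces $\phi|_{G'}=1$, since $[xz,yw]=[x,y]$ whenever $z,w\in Z(G)$. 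Hence $\ker\pi$ is exactly the subgroup of central automorphisms, which the paper denotes $C_{\Aut(G)}(Z(G))$, and $\iota$ is the inclusion.

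The substance of the proposition is the surjectivity of $\pi$ under the exponent-$p$ hypothesis. Given $(\bar\phi,\hat\phi)\in\pseudo(\circ_G)$, I would build a lift by hand. Choose any $\mathbb{F}_p$-complement $A$ to $G'$ inside $Z(G)$, extend $\hat\phi$ to an automorphism $\tilde\phi$ of $Z(G)$ by acting arbitrarily on $A$ (say, as the identity), fix a set-theoretic section $\sigma\colon G/Z(G)\to G$, and define
\[
\phi\bigl(\sigma(\bar x)\,z\bigr)\;=\;\sigma\bigl(\bar\phi(\bar x)\bigr)\,\tilde\phi(z)\qquad (z\in Z(G)).
\]
The verification that $\phi$ is a homomorphism reduces to comparing $\tilde\phi\bigl(\omega(\bar x,\bar y)\bigr)$ with $\omega\bigl(\bar\phi(\bar x),\bar\phi(\bar y)\bigr)$, where $\omega(\bar x,\bar y):=\sigma(\bar x)\sigma(\bar y)\sigma(\overline{xy})^{-1}$ is the 2-cocycle of the central extension $1\to Z(G)\to G\to G/Z(G)\to 1$ determined by $\sigma$.

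The main obstacle is precisely this cocycle-matching step, and the exponent-$p$ hypothesis is what makes it go through. Decomposing $\omega=\omega_a+\omega_s$ into antisymmetric and symmetric parts (for odd $p$), the piece $\omega_a$ is a scalar multiple of $\circ_G$ and is controlled by the pseudo-isometry assumption on $(\bar\phi,\hat\phi)$; the symmetric piece $\omega_s$ records the $p$-th power map $g\mapsto g^p$, which vanishes in exponent $p$, and the residual freedom is absorbed into the choice of $\tilde\phi$ on $A$. (The case $p=2$ is trivial, since an exponent-$2$ group is abelian.) Equivalently, Theorem~\ref{thm:baer} reinterprets $(\bar\phi,\hat\phi)$ as an isoclinism $G\rightsquigarrow G$, and the Universal Coefficients argument underlying Proposition~\ref{prop:isoclinic-isomorphic} promotes such an isoclinism of exponent-$p$ groups to a bona fide automorphism.
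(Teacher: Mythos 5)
The paper does not actually prove this proposition in the text: it is quoted from \cite{Wilson:unique-cent}*{Proposition 3.8}. Your direct argument is the standard one and is essentially sound. The definition of $\pi$, the observation that triviality on $G/Z(G)$ forces triviality on $G'$, and the identification of $\ker\pi$ with the central automorphisms are all correct (the paper's notation $C_{\Aut(G)}(Z(G))$ for that kernel is best read as the centralizer of the \emph{quotient} $G/Z(G)$; taken literally as the pointwise stabilizer of the subgroup $Z(G)$ the sequence would not be exact, as the extraspecial case already shows).

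The one place you should tighten the surjectivity argument is the cocycle-matching step. As written, the formula $\phi(\sigma(\bar x)z)=\sigma(\bar\phi(\bar x))\tilde\phi(z)$ is a homomorphism if and only if $\tilde\phi(\omega(\bar x,\bar y))=\omega(\bar\phi\bar x,\bar\phi\bar y)$, and for an \emph{arbitrary} section $\sigma$ this identity simply fails. Moreover $\omega$ is only a $2$-cocycle, not a bilinear map, so the decomposition $\omega=\omega_a+\omega_s$ is not available until you have normalized $\omega$ within its cohomology class. The correct order of operations is: use the decomposition $H^2(V,Z(G))\cong\Hom(\Lambda^2V,Z(G))\oplus\Hom(V,Z(G))$ for $V=G/Z(G)$ elementary abelian and $p$ odd, note that the second (power-map) component of the class of $1\to Z(G)\to G\to V\to 1$ vanishes because $G$ has exponent $p$, and conclude that $\sigma$ may be \emph{chosen} so that $\omega=\tfrac12\,\iota\circ_G$ with $\iota\colon G'\hookrightarrow Z(G)$. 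With that choice the required identity is literally the pseudo-isometry condition on $(\bar\phi,\hat\phi)$, and the choice of $\tilde\phi$ on the complement $A$ never enters (so the ``residual freedom'' is absorbed by a coboundary adjustment of $\sigma$, not by $\tilde\phi|_A$ as you suggest). With that correction your proof is complete, and your closing remark — that one may instead invoke Theorem~\ref{thm:baer} together with the Universal Coefficients argument behind Proposition~\ref{prop:isoclinic-isomorphic} — is exactly the route the cited reference takes.
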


The proof of Proposition~\ref{prop:autoclinism} shows, in fact, that the group of {\em autoclinisms} 
of $G$ coincides with $\pseudo(\circ_G)$.
\medskip

Finally, for a fixed alternating bimap $\circ\colon V\times V\bmto W$ the {\em isometry} group is
\begin{align}
\label{eq:isometry-group}
\isom(\circ) = \{ \phi\in \Aut(V) \colon  \forall u,v\in V, u\phi\circ v\phi = u\circ v \}.
\end{align}
This is the kernel of the restriction of $\pseudo(\circ)$ to $W$.  

\subsection{Computational models for groups}
\label{sec:models}
Efficient algorithms exist to determine crucial information about groups. Details and proofs can be found in \citelist{\cite{Seress}\cite{HoltEO}}.
The meaning of efficiency depends on how groups are specified for computation.
\smallskip

The pioneering work of Sims, Cannon, and Neub\"user in the 1960s and 
1970s led to the standard models of computation that we use today.  
It is most common to specify general finite groups by small sets of generators 
(matrices over finite fields or  permutations of a finite set). Special classes of 
groups admit certain types of structured presentations as feasible computational models. 
For example, polycyclic presentations are often used for computations with solvable groups. 
Algorithms for $p$-groups should certainly apply to these specialized models but we caution that the 
complexity of multiplication can be exponential~\citelist{\cite{collection}\cite{deep-thought}}.
The notion of a ``black-box'' group was introduced by Baba\'i and Szemer\'edi 
as a means of stripping away information specific to the particular representation,
and thereby forcing algorithms to deal only with the algebraic structure of the group.

We elect, here, to assume that groups are given as quotients of representations of groups.   
This input model lies between groups given by concrete representations and
groups given in an abstract black box model. 
We mean by this that quotient groups do not act on the underlying 
set (or vector space) so one cannot naturally work with orbits (or modules)
of the given group. On the other hand, the parent group does act,  and hence provides some  
access to representation specific methodology.
This led Kantor and Luks~\cite{KL:quo} to propose the 
``quotient group thesis": {\em problems soluble in polynomial time for permutation groups
ought also to be soluble in polynomial time for quotients of such groups}
(although usually with rather more sophisticated methods).
Of critical importance to us is that often $p$-groups cannot be represented faithfully as permutations 
on small sets.  As an illustration, P. Neumann showed that 
extraspecial groups of order $2_+^{2m+1}$ have no faithful permutation 
representation on fewer than $2^m$ points, yet they
are {\em quotients} of $D_8^m$ represented on $4m$ points. We show
in Proposition~\ref{prop:degree} that this phenomenon 
also occurs for groups of genus $2$.  The inclusion of quotients 
indicates that our algorithms are prepared to 
consider groups whose order is exponential in the input size.
\smallskip

Whichever computational model we wish to work with, we require an analogue for that model of the 
following fundamental result.

 \begin{prop}
\label{prop:basic-algs}
Given a group $G$ as a quotient of a permutation group, in polynomial time one can
\begin{enumerate}[(i)]
\item find $|G|$,
\item write $x$ as a word over $x_1,\dots, x_c\in G$ or prove $x\not\in\langle x_1,\dots,x_c\rangle$,
\item find generators for $Z(G)$ and for $G'$,
\item decide if $G$ is nilpotent of class $2$, and
\item if $G$ is nilpotent of class 2, construct a system of forms for $\circ_G$.
\end{enumerate}
\end{prop}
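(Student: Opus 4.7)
The plan is to reduce each claim to standard polynomial-time routines for permutation groups, applied to the ambient group $P=\langle X\rangle$ and its normal subgroup $N=\langle R^P\rangle$, so that $G=P/N$. First I would run Schreier--Sims on $X$ to obtain a base and strong generating set for $P$; this yields $|P|$, constructive membership, and efficient orbit/stabilizer routines (Seress). Next I would compute $N$ as the normal closure of $R$ in $P$ by repeatedly conjugating the current generating set by $X$ and re-running Schreier--Sims until the size stabilises; polynomially many rounds suffice, and the process produces $|N|$ together with constructive membership in $N$.

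Item (i) is then immediate from $|G|=|P|/|N|$. For (ii), given $x\in G$ presented by a lift $\tilde x\in P$ and generators $x_1,\dots,x_c$ lifted to $\tilde x_1,\dots,\tilde x_c\in P$, the preimage $H=\langle \tilde x_1,\dots,\tilde x_c\rangle N$ is itself a permutation group in $P$; Schreier--Sims constructive membership decides $\tilde x\in H$, and the resulting straight-line program expresses $\tilde x$ as a word in the $\tilde x_i$ and generators of $N$, from which the $N$-part is discarded modulo $N$. Item (iii) uses $G'=\langle [\tilde x_i,\tilde x_j]\colon i,j\rangle\,N/N$ for any lift of a generating set of $G$, and for the centre the coset $gN$ lies in $Z(G)$ exactly when $[g,\tilde x_i]\in N$ for every generator $\tilde x_i$; the corresponding centraliser-modulo-$N$ is computed by the quotient-group analogue of Kantor--Luks~\cite{KL:quo}, which supplies polynomial-time centralisers, normal closures, and constructive membership in sections of a permutation group. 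Item (iv) reduces to checking $[[\tilde x_i,\tilde x_j],\tilde x_k]\in N$ for every triple of lifted generators. For (v), having computed $Z(G)$ and $G'$ as sections of $P$, I would compute a generating set $\{v_1,\dots,v_d\}$ of $G/Z(G)$ (by sifting lifts of $X$ through $Z(G)$) and a $\mathbb{Z}$-basis $\{w_1,\dots,w_e\}$ of the abelian group $G'$; the structure constants $\alpha_{ij\ell}$ with $v_i\circ_G v_j=\sum_\ell \alpha_{ij\ell}\,w_\ell$ are then obtained by expressing each $[\tilde v_i,\tilde v_j]\bmod N$ in terms of $w_1,\dots,w_e$ via constructive membership, and organised into the matrices $\Phi_\ell=[[\alpha_{ij\ell}]]$ of \S\ref{subsec:bimaps}.

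The main obstacle is not any one of (i)--(v) individually but the fact that $G=P/N$ is never faithfully represented: one cannot act directly on the underlying set of $P$ and get a permutation representation of $G$. This is precisely the content of the ``quotient group thesis'' of Kantor--Luks, and I would rely on their toolkit (together with the standard algorithms catalogued in \cite{Seress} and \cite{HoltEO}) to guarantee that each of the above subroutines runs in time polynomial in $(|X|+|R|)n$. Once those tools are granted, the remaining work is routine: abelian-group linear algebra inside the $\mathbb{Z}$-module $G'$, sifting lifts through the chain $N\le [P,P]N\le Z_2(P,N)$, and reading off structure constants.
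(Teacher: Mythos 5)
Your proposal is correct and follows essentially the same route as the paper: items (i)--(iv) are delegated to the standard permutation-group toolkit of Seress together with the Kantor--Luks quotient-group machinery, and item (v) is handled exactly as in the paper by fixing bases of $G/Z(G)$ and $G'$ and reading off structure constants from the commutators of lifted generators. The extra detail you supply (normal closure via repeated conjugation and Schreier--Sims, centre of the quotient via the condition $[g,\tilde x_i]\in N$, the reduction of class-$2$ testing to triples of generators) is a faithful unpacking of what the paper leaves to the cited references.
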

\begin{proof}
For (i)--(iv) see \cite{Seress}*{Chapter 6} and \cite{KL:quo}.
For (v), fix bases $\{x_1,\dots,x_d\}$ and $\{w_1,\ldots,w_e\}$ for the abelian groups $G/Z(G)$ and $G'$,
respectively. The structure constants for the associated system of forms are  obtained by writing
each $[x_i,x_j]$ as a vector relative to $\{w_1,\ldots,w_e\}$.
\end{proof}

%Outside of the functionality of Proposition~\ref{prop:basic-algs} our methods use the group as a black-box. 
%So one can apply algorithms for other input models to our methods.  We mention the complexity may 
%to what can be efficiently computed in other contexts.    

We shall state and prove various results for bimaps that require us to work with large fields.
We therefore allow ourselves to factor polynomials using randomized Las Vegas 
polynomial-time factorization algorithms.
(A Las Vegas algorithm always returns a correct result but with small, user prescribed, probability 
reports failure.)  Such methods can always be ``derandomized" whenever the characteristic $p$ is 
bounded by the input size -- as is the case with permutation group quotients.  
We refer the reader to~\cite{vzG} for further information on these matters.

%=============================================================================================

%=============================================================================================
\section{Groups of Genus $2$}
\label{sec:genus2}

In this section we propose an integer metric for the ``complexity" of a nilpotent group.  
Inspired by an analogous metric introduced by Knebelman to measure the
complexity of Lie algebras, we call this number the {\em genus} of a group.
The broader class of groups underlying
Theorem~\ref{thm:main1} are the groups of genus 2.

%%%
\subsection{The centroid and genus of a group}
Let $k$ be a commutative ring, and $\circ\colon U\times V\bmto W$ a $k$-bimap.  
The \emph{centroid} of $\circ$ is the largest 
ring, $C$, over which $\circ$ is $C$-bilinear, namely
\begin{align*}
	C({\circ}) & = \{ \sigma\in \End(U)\times \End(V)\times \End(W) \colon  \forall u, \forall v,
		(u\sigma) \circ v= (u\circ v)\sigma=u\circ (v\sigma)\}.
\end{align*}
It is understood that $\sigma\in C(\circ)$ acts naturally on $U$, $V$, and $W$ but 
we can write $\sigma=(\sigma_U, \sigma_V;\sigma_W)$ if we wish
to clarify the action on the individual $k$-modules. 
The explicit definition of $C(\circ)$ makes it clear that this ring may be obtained
as the solution of a system of linear equations.
If $\circ$ is fully nondegenerate 
-- as is the case with the commutation bimap of a group -- then $C(\circ)$ is commutative.  
The following connection between centroids and direct products was proved
in~\cite{Wilson:RemakI}*{Section 6}.

%%%
\begin{thm}
\label{thm:centroid-direct}
For a finite nilpotent group $G$ of class $2$, $G$ is isoclinic to a direct product of proper 
nontrivial subgroups if, and only if,
the centroid $C(\circ_G)$ is a direct product of proper subrings.
\end{thm}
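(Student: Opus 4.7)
The plan is to combine Baer's correspondence (Theorem~\ref{thm:baer}) with an analysis of central idempotents in the centroid. First I would establish that $G$ is isoclinic to a nontrivial direct product $G_1\times G_2$ if, and only if, the alternating bimap $\circ_G\colon V\times V\bmto W$ (with $V=G/Z(G)$, $W=G'$) splits as an \emph{orthogonal direct sum}: there exist nontrivial decompositions $V=V_1\oplus V_2$ and $W=W_1\oplus W_2$ with $V_i\circ_G V_i\subseteq W_i$ and $V_1\circ_G V_2=0=V_2\circ_G V_1$. The forward direction is immediate by taking $V_i=G_i/Z(G_i)$ and $W_i=[G_i,G_i]$. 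The reverse direction uses Baer's correspondence to realize each restricted bimap $\circ_i\colon V_i\times V_i\bmto W_i$ as the commutation map of a class-$2$ group $G_i$, after which pseudo-isometry of $\circ_G$ with $\circ_1\perp\circ_2$ gives the required isoclinism $G\sim G_1\times G_2$.

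Next I would show that such orthogonal direct sum decompositions of $\circ_G$ correspond bijectively to nontrivial idempotents of $C(\circ_G)$. Given a decomposition, the projections $(\pi_V,\pi_V;\pi_W)$ onto $V_1$ and $W_1$ form an idempotent of the centroid because orthogonality forces the two expressions $(u\pi_V)\circ v$ and $u\circ(v\pi_V)$ to coincide with $(u\circ v)\pi_W$ on each component. Conversely, a nontrivial idempotent $e=(e_V,e_V;e_W)\in C(\circ_G)$ yields the decompositions $V=Ve_V\oplus V(1-e_V)$ and $W=We_W\oplus W(1-e_W)$. Then for $u\in Ve_V$ and $v\in V(1-e_V)$, the centroid identity gives
\[
    u\circ v \;=\; (ue_V)\circ v \;=\; (u\circ v)e_W
    \quad\text{and}\quad
    u\circ v \;=\; u\circ\bigl(v(1-e_V)\bigr) \;=\; (u\circ v)(1-e_W),
\]
so $u\circ v\in We_W\cap W(1-e_W)=0$, confirming orthogonality. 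Full nondegeneracy ensures $W=V\circ V=V_1\circ V_1+V_2\circ V_2$, so the decomposition of $W$ matches the one induced by~$e$.

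Finally, since $\circ_G$ is fully nondegenerate, $C(\circ_G)$ is commutative, and a nontrivial idempotent $e$ of a commutative ring is precisely what is needed to decompose it as a product of proper subrings $C(\circ_G)=eC(\circ_G)\times(1-e)C(\circ_G)$; conversely, any such product decomposition produces a pair of complementary nontrivial idempotents. Stringing the three equivalences together yields the theorem.

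The main obstacle will be the careful bookkeeping in the second step: one must verify that the idempotent in the centroid really restricts to projections that split all three modules $V$, $V$, $W$ compatibly, and that full nondegeneracy is strong enough to force $W_i=V_i\circ V_i$ rather than some smaller subspace. A secondary (but routine) issue is checking that the groups $G_i$ reconstructed in Step~1 can be chosen so that $G_1\times G_2$ is actually isoclinic -- not merely has the same commutation bimap as -- the original~$G$; this is handled by Proposition~\ref{prop:isoclinic-isomorphic} and the observation that Baer's correspondence is natural with respect to direct sums.
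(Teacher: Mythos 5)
The paper offers no proof of this theorem: it is quoted from \cite{Wilson:RemakI}*{Section 6}, and your three-step reduction (direct decompositions up to isoclinism $\leftrightarrow$ orthogonal splittings of $\circ_G$ $\leftrightarrow$ idempotents of $C(\circ_G)$ $\leftrightarrow$ ring decompositions) is exactly the mechanism used there, so in spirit you are reproducing the intended argument. Steps 2 and 3 are sound as written: the computation forcing $V_1\circ V_2=0$ from a centroid idempotent is correct, full nondegeneracy does give $W_i=\langle V_i\circ V_i\rangle$, and you should add the (easy) observation that $e_V=0$ if and only if $e_W=0$, so a nontrivial idempotent really splits both $V$ and $W$ nontrivially.

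Two points in Step 1 need care. First, the forward direction is not ``immediate'': if a factor $G_1$ is abelian then $V_1=G_1/Z(G_1)=0$ and $W_1=0$, so the induced decomposition is trivial and yields no idempotent --- indeed $E\times\mathbb{Z}_p$ with $E$ extraspecial is a direct product of proper nontrivial subgroups whose centroid is the field $\mathbb{Z}_p$. This is an imprecision inherited from the statement itself (isoclinism discards abelian direct factors), and the forward implication should be read with ``proper nontrivial'' meaning that each factor has nontrivial commutator subgroup; your proof then goes through. Second, your reverse direction invokes the realization half of Baer's correspondence (every fully nondegenerate alternating bimap is $\circ_H$ for some class-$2$ group $H$), which Theorem~\ref{thm:baer} as stated does not provide. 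You can avoid it entirely: let $H_i\leq G$ be the preimage of $V_i$ in $G$; then $[H_1,H_2]=1$, nondegeneracy of the restricted bimaps gives $Z(H_i)=Z(G)$, and $\circ_{H_1\times H_2}$ is visibly pseudo-isometric to $\circ_G$, so the stated half of Baer's correspondence already yields that $G$ is isoclinic to $H_1\times H_2$. (The closing appeal to Proposition~\ref{prop:isoclinic-isomorphic} is unnecessary, since only isoclinism is asserted.)
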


Being concerned with questions of isomorphism, we focus on the directly indecomposable 
factors of a group -- thus, by Theorem~\ref{thm:centroid-direct}, $C=C(\circ_G)$
is a local ring.  Thus, if $J=J(C)$ is the Jacobson radical of $C$,
$W/WJ$ is a vector space over the residue field $C/J$, and we define the {\em rank}
of $W$ to be the dimension of this space.

\begin{defn}
\label{def:genus}
Let $G$ be a nilpotent group of class $2$. Then $G$ is isoclinic to a direct product 
$H_1\times \cdots \times H_s$ of directly indecomposable
groups.  The \emph{genus} of $G$ is maximum rank of $[H_i,H_i]$ as a 
$C(\circ_{H_i})$-module.
\end{defn}

The concept of genus arose first in Knebelman's attempts to
classify Lie algebras and general nonassociative algebras~\cite{Knebelman:genus} .
He observed that when the dimension of a Lie $k$-algebra $L$ 
was close the minimum number, $d(L)$, of generators, there are relatively few variable relations. 
Accordingly, he proposed that algebras of low {\em genus} -- which he defined as $\dim L-d(L)$ -- 
should be  easier to classify.  For instance, if $L$ is abelian then $\dim L-d(L)=0$,
and if $L$ is a Heisenberg Lie algebra then $\dim L-d(L)=1$.  Later, 
Bond tackled the classification of  Lie algebras of genus 2, and reduced 
the problem to the class $2$ nilpotent Lie algebras of genus $2$~\cite{Bond}.    
The latter problem remains very difficult.
In fact the classification of 6-dimensional 
Lie algebras has only recently been completed~\citelist{\cite{Morozov}\cite{CdGS}}, and
the nilpotent genus $2$ cases are the most involved.

\subsection{Some groups of low genus} 
\label{subsec:first-examples}
To reveal some important subtleties in the definition of genus,
and to provide concrete examples of the groups
we propose to study, we introduce some groups
of genus 1 and genus 2.

\begin{enumerate}[(a)]
\item Every group with cyclic central commutator subgroup has genus $1$.
For such $G$ with
$\circ_G\colon G/Z(G)\times G/Z(G)\bmto \mathbb{Z}_m$ we have
$C(\circ_G)=\mathbb{Z}_m=\mathbb{Z}_{p_1^{e_1}}\oplus \cdots \oplus \mathbb{Z}_{p_s^{e_s}}$,
with each $p_i$ a distinct prime.    As $G$ is a direct product of its Sylow subgroups, 
we need only the maximum genus when restricted to each $p_i$.  As each $\mathbb{Z}_{p_i^{e_i}}$ is
cyclic, the genus of each Sylow subgroup is $1$.  

\item Any group with central commutator subgroup isomorphic to 
$\mathbb{Z}_m\times \mathbb{Z}_n$ has genus at most $2$.
Let $G$ be such a group.
If $(m,n)=1$, then $\mathbb{Z}_m\times \mathbb{Z}_n$ is cyclic and
$G$ has genus 1.  Else, $G$ is again a product of Sylow subgroups. 
Let $P$
be a Sylow $p$-subgroup of $G$ of largest genus.  
We may assume 
$P'\cong \mathbb{Z}_{p^e}\times \mathbb{Z}_{p^f}$, $e\geq f\geq 1$. Either 
$C(\circ_P)\cong\mathbb{Z}_{p^e}$ (in which case $P$ is genus 2), or $C(\circ_P)$ 
is not local and $P$ is isoclinic to a nontrivial direct product (so $P$ is genus 1).  

\item Fix any commutative Artinian ring $K$, and consider the
Heisenberg groups 
\begin{align}\label{eq:Heisenberg}
	H_m(K) & =\left\{\begin{bmatrix} 1 & u & s \\ 0 & I_m & v^{{\rm tr}} \\ 
	0 & 0 & 1 \end{bmatrix} : u,v\in K^m, s\in K\right\}.
\end{align}
If $K=K_1\oplus \cdots \oplus K_s$ be a decomposition of $K$ into local rings,  then
$H_m(K)\cong H_m(K_1)\times \cdots \times H_m(K_s)$, so the genus is the maximum genus of any 
$H_m(K_i)$. The bimap of $H_m(K_i)$ is simply the alternating nondegenerate form 
$K_i^{2m}\times K_i^{2m}\bmto K_i$ having $K_i$ as centroid.  
Since $K_i$ is commutative and local, 
$K_i/J(K_i)$ is a field, so $H_m(K_i)$ has genus $1$. Hence, all Heisenberg 
groups have genus 1.
\end{enumerate}

While all of these examples are somewhat elementary, 
from the point of view of classification
we have already entered turbulent waters. For instance,
classifying the groups with cyclic central commutator in part (a)  has 
taken the combined work of several authors including Leong \cite{Leong}, Finogenov \cite{Fin} and 
Blackburn \cite{Blackburn}.
The Heisenberg groups in (c) were only recently characterized in abstract terms
(with no a priori knowledge of $m$ or $K$)
for the case when $K$ is a field~\cite{LW}*{Theorem 3.1}.  
(Our results extend that characterization to the case when $K$ is a cyclic algebra.)
\medskip

It may surprise the reader that groups seeming to have genus $g>1$ are in fact genus 1.
For example, $H_1(\mathbb{F}_{p^g})$ is a
group whose central commutator is isomorphic to $\mathbb{Z}_p^g$, so it would seem
that one can easily build a group of high genus. However, the centroid recovers field
structure, and viewed as a vector space over the centroid the commutator is 1-dimensional.  
Similarly, a direct product $H_1(\mathbb{F}_p)^g$ again has commutator
$\mathbb{Z}_p^g$. Via Theorem~\ref{thm:centroid-direct} and Definition~\ref{def:genus}, however,
those examples are also genus 1.  
Note, moreover, that our definition of centroid is invariant under extensions: 
if $G$ is a group of genus $g$ over $K$, and  
$H$ is a group such that $\circ_H$ is the tensor of $\circ_G$ with a field 
extension $E$ of $K$, then $H$ has genus $g$ over $E$.

%%%
\subsection{Central decompositions, hyperbolic pairs, and adjoints}
\label{sec:decomps-genus2}
The groups of genus $2$ admit two important decompositions.  
The first decomposes the group as a central product of subgroups,
and the second as a product of two abelian normal subgroups whose
intersection is central. 
We shall make essential use of both types of decomposition in our algorithms,
so we now introduce them and give characterizations that facilitate
effective computation.

\begin{defn}
\label{defn:cent-decomp}
A {\em central decomposition} of a group $G$ is a set, $\mathcal{H}$, of subgroups generating 
$G$ such that for $H\in\mathcal{H}$, $G\neq \langle \mathcal{H}-\{H\}\rangle$ and
$[H,\langle \mathcal{H}-\{H\}\rangle]=1$.  We say that $G$ is {\em centrally indecomposable} 
if $\{G\}$ is its only central decomposition. 
\end{defn}

A detailed treatment of central decompositions of $p$-groups is the subject of \cite{Wilson:unique-cent},
and we shall use some of the results therein. The second decomposition we need mimics hyperbolic pairs
in the sense of symplectic geometry.  It was introduced in \cite{LW}*{Section 6} to work with $2$-groups, but we
use it here for arbitrary $p$-groups.

\begin{defn}
\label{defn:hyp-pair}
A {\em hyperbolic pair} for a group $G$ is a pair $M,N$ of normal abelian subgroups of $G$ such that 
$G=MN$ and $M\cap N\leq Z(G)$.
\end{defn}

Both central and hyperbolic decompositions may be obtained from a ring that is easily computed from $\circ_G$,
namely the ring of adjoints.
In a similar vein to our definition of centroid, we introduce the \emph{adjoint ring}, $A(\circ)$, of a bimap
$\circ\colon U\times V\bmto W$ as the largest ring, $A$,
over which $\circ$
factors through $U\otimes_A V$, namely
\begin{align*}
	A(\circ) & = \{ \mu\in \End(U)\times\End(V)^{{\rm op}} : \forall u\forall v, u\mu\circ v = u\circ \mu v\}.
\end{align*}
Again, $A(\circ)$ may be obtained as the solution of a system of 
linear equations~\citelist{\cite{Wilson:find-central}\cite{BW:isom}\cite{BW:slope}}. 
As $\End(V)^{{\rm op}}$ suggests, we find it convenient work with the opposite ring 
in the second component -- thus $A$ acts on $U$ on the right but on $V$ on the left.
If we need to clarify the action we write $u\mu =uL_\mu$ and $\mu v=vR_{\mu}$.

If $\circ\colon V\times V\bmto W$ is a nondegenerate, alternating bimap,
then $A(\circ)$ is faithfully represented
in $\End(V)$ and in $\End(V)^{\rm op}$.  This endows $A(\circ)$ with a
natural anti-isomorphism interchanging $R_\mu$ and $L_\mu$,
giving it the structure of a $*$-ring.  
The connections to central decompositions and hyperbolic pairs 
come from the existence of certain types if idempotents in this $*$-ring.
We say that an idempotent, $e$, in $A(\circ)$ is {\em self-adjoint} if $e^*=e$,
and {\em hyperbolic} of $e^*=1-e$.
Recall that idempotents $e,f$ in a ring are {\em orthogonal} if $ef=0=fe$.

%%%
\begin{lemma}
\label{lem:selfadj-idemp}
A finite nilpotent group, $G$, of nilpotence class 2, has
\begin{enumerate}[(i)]
\item  a central decomposition $\{H_1,\dots,H_s\}$ if, and only if,
$A(\circ_G)$ has a set $\{e_1,\dots,e_s\}$ of pairwise orthogonal, 
self-adjoint idempotents that sum to $1$, and
\item a hyperbolic pair if, and only if, 
$A(\circ_G)$ has hyperbolic idempotents.
\end{enumerate}
\end{lemma}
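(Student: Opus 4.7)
My plan is to work throughout with the commutation bimap $\circ_G\colon V\times V\bmto W$, where $V=G/Z(G)$ and $W=G'$, and to translate each group-theoretic condition into a direct sum decomposition of $V$ carrying prescribed orthogonality properties under $\circ_G$. The associated projections on $V$ will be idempotents in $\End(V)$, and the $*$-involution on $A(\circ_G)$ combined with these orthogonality relations will dictate whether the projections are self-adjoint or hyperbolic. Passage from subspaces of $V$ back to subgroups of $G$ will be by taking preimages under $\pi\colon G\to G/Z(G)$, and is routine since $G$ has class $2$.

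For (i), I will first assume $\{H_1,\dots,H_s\}$ is a central decomposition and set $V_i=\pi(H_i)$. The generating property gives $V=V_1+\cdots+V_s$, and pairwise centralisation gives $V_i\circ V_j=0$ whenever $i\neq j$. The key preparatory step is to upgrade this to an internal direct sum $V=V_1\oplus\cdots\oplus V_s$; for this I will cite the correspondence between central decompositions of class $2$ nilpotent groups and orthogonal decompositions of their commutation bimaps developed in \cite{Wilson:unique-cent} (itself resting on Theorem~\ref{thm:centroid-direct}). Granted the direct sum, let $e_i\in\End(V)$ be projection onto $V_i$; decomposing $u=\sum_k u_k$ and $v=\sum_k v_k$ I will verify $ue_i\circ v=u_i\circ v_i=u\circ e_iv$, placing $e_i\in A(\circ_G)$, and the identity of left and right actions of $e_i$ on $V$ yields $e_i^*=e_i$. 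Pairwise orthogonality of the $e_i$ and the relation $\sum e_i=1$ are immediate from the direct sum. For the converse I will start from $\{e_1,\dots,e_s\}$ satisfying the hypotheses, set $V_i=Ve_i$, and use the bimodule identity together with $e_i^*=e_i$ and $e_ie_j=0$ ($i\neq j$) to deduce $V_i\circ V_j=0$; the preimages $H_i=\pi^{-1}(V_i)$ then give a central decomposition.

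For (ii) the translation is cleaner. Given a hyperbolic pair $(M,N)$, set $\bar M=\pi(M)$ and $\bar N=\pi(N)$; the conditions $G=MN$ and $M\cap N\leq Z(G)$ deliver $V=\bar M\oplus\bar N$, while $M$ and $N$ being abelian give $\bar M\circ\bar M=0=\bar N\circ\bar N$. Taking $e$ to be the projection onto $\bar M$ along $\bar N$ and decomposing $u=u_M+u_N$, $v=v_M+v_N$, a short calculation shows $ue\circ v=u_M\circ v_N=u\circ(1-e)v$, so $e\in A(\circ_G)$ with $e^*=1-e$. Conversely, from a hyperbolic idempotent $e$ I will form $V=Ve\oplus V(1-e)$ and check total isotropy of each summand via $u\circ v=ue\circ v=u\circ(1-e)v=0$ for $u,v\in Ve$ (symmetrically for $V(1-e)$), then lift to $M=\pi^{-1}(Ve)$ and $N=\pi^{-1}(V(1-e))$ to obtain the required abelian normal subgroups.

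The hard part will be the internal direct sum claim in the forward direction of (i): a set of subgroups generating $G$ and pairwise centralising one another need not descend to a linearly independent family in $V$, so the adjoint-ring projections $e_i$ are not constructed directly from the group data. The correspondence between central decompositions and orthogonal bimap decompositions from \cite{Wilson:unique-cent} is precisely the technical input that removes this obstacle, after which the rest of the argument is systematic verification of the involution and adjoint identities defining the $*$-structure on $A(\circ_G)$.
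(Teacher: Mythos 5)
Your proof is correct, and for part (ii) it is essentially the argument the paper gives: the same passage between a hyperbolic pair $(M,N)$ and the decomposition $V=\bar M\oplus\bar N$, the same computation $ue\circ v=u\circ(1-e)v$ using total isotropy of the two summands, and the same lifting of $Ve$ and $V(1-e)$ back to subgroups. The difference is in part (i): the paper disposes of it entirely by citing \cite{Wilson:unique-cent}*{Theorem 4.10}, whereas you reconstruct the correspondence directly, only deferring to that reference for the claim that $V=V_1+\cdots+V_s$ is an internal direct sum. In fact you do not need the external citation even for that step, and it is not the ``hard part'' you fear: if $v\in V_i\cap\sum_{j\neq i}V_j$, then $v\circ V_i=0$ because $v$ lies in the span of the $V_j$ with $j\neq i$, and $v\circ V_j=0$ for $j\neq i$ because $v\in V_i$; hence $v$ lies in the radical of $\circ_G$, which is trivial by the Baer correspondence (Theorem~\ref{thm:baer}), so $v=0$. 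With that observation your argument for (i) is fully self-contained, which is arguably preferable to the paper's bare citation. The only loose end, common to both directions of your (i), is the usual convention that the idempotents $e_i$ be nonzero, matching the non-redundancy clause $G\neq\langle\mathcal{H}-\{H\}\rangle$ in Definition~\ref{defn:cent-decomp}; this is worth a sentence but is not a gap.
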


\begin{proof}
A proof of (i) may be found in~\cite{Wilson:unique-cent}*{Theorem 4.10}.

For (ii), let $Z=Z(G)$, and $\circ=\circ_G\colon G/Z\times G/Z\bmto G'$.
Suppose $M,N$ is a hyperbolic pair for $G$, and put 
$E=MZ/Z$ and $F=NZ/Z$.  
Since $G=MN$ and $M\cap N \leq Z(G)$ we have $V=G/Z=E\oplus F$.  
Let $e$ denote the projection idempotent onto $E$ with kernel $F$.
Hence, $1-e$ is the projection idempotent onto $F$ with kernel $E$.
As $M$ and $N$ are
abelian, note $E\circ E=0=F\circ F$, so  
for all $u,v\in V$, 
\begin{align*}
ue\circ ev=0=u(1-e)\circ(1-e)v,
\end{align*}
and we have  
\begin{align*}
	ue\circ v & = ue\circ ev+ue\circ (1-e)v \\
	               & =u\circ (1-e)v -u(1-e)\circ (1-e) v \\
	               & = u\circ (1-e)v.
\end{align*}
In particular, $e\in A(\circ)$, and $e^*=1-e$.  

Conversely, observe that if $e\in A(\circ)$ and 
$e^*=1-e$, then $V=Ve\oplus V(1-e)$ and
$Ve\circ e V=Ve(1-e)\circ V=0$.  
Hence, $M=\{ g\in G \colon  (gZ)e=gZ\}$ and 
$N=\{g\in G \colon  gZ(1-e)=gZ\}$ is a hyperbolic pair
for $G$.
\end{proof}

Lemma~\ref{lem:selfadj-idemp} provides a tool to locate central decompositions and hyperbolic pairs.

%%%
\begin{thm}
\label{thm:cent-hyp-alg}
There are polynomial-time algorithms for each of the following:
\begin{enumerate}[(i)]
\item construct a fully-refined central decomposition of a given finite $p$-group; and
\item decide if a given $p$-group has a hyperbolic pair and construct one if it does.
\end{enumerate}
\end{thm}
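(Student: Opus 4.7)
The plan is to reduce both parts to questions about the adjoint $*$-algebra $A = A(\circ_G)$, which by Lemma~\ref{lem:selfadj-idemp} governs both central decompositions and hyperbolic pairs. First I would invoke Proposition~\ref{prop:basic-algs} to verify that $G$ has class $2$ and to produce a system of forms encoding $\circ_G \colon V \times V \bmto W$, where $V = G/Z(G)$ and $W = G'$. A basis of $A$ is then cut out of $\End(V)\times \End(V)^{\op}$ by the defining linear equations $uL_\mu \circ v = u \circ R_\mu v$, and the involution $*\colon L_\mu\leftrightarrow R_\mu$ is recovered on generators by the same linear system, all in time polynomial in $\log|G|$. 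Standard algorithms then produce the Jacobson radical $J = J(A)$ and the $*$-invariant semisimple quotient $\bar A = A/J$.

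For part (i), a fully-refined central decomposition corresponds by Lemma~\ref{lem:selfadj-idemp}(i) to a maximal family of pairwise orthogonal self-adjoint idempotents of $A$ summing to $1$. I would first compute the Wedderburn decomposition of $\bar A$ as a $*$-algebra over its finite residue field; each $*$-simple component is either a simple algebra fixed by $*$ (carrying an induced hermitian or symplectic form) or a direct sum of two isomorphic simple algebras interchanged by $*$. In both cases the $*$-primitive self-adjoint idempotents are read off from an explicit matrix model of the component. I would then lift this orthogonal family back to $A$ using a standard Hensel-type idempotent lift along the $*$-invariant nilpotent ideal $J$, and finally translate each idempotent $e_i$ to the corresponding subgroup $H_i \leq G$ by pulling back $Ve_i$ from $V$ to $G$ as in the proof of Lemma~\ref{lem:selfadj-idemp}.

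For part (ii), by Lemma~\ref{lem:selfadj-idemp}(ii) it suffices to detect an idempotent $e \in A$ with $e^* = 1 - e$ and, if it exists, to produce one. Using the $*$-Wedderburn decomposition of $\bar A$ from part (i), a hyperbolic idempotent in $\bar A$ is assembled from two ingredients: a pairing of the $*$-exchanged simple components, and, within each $*$-fixed simple component, a Lagrangian decomposition of its natural module under the induced hermitian form. The existence of such a Lagrangian is decidable in polynomial time from the isomorphism type of the hermitian space, and when present, can be constructed explicitly using linear algebra over the residue field. The resulting idempotent in $\bar A$ lifts to $A$ by the same Hensel procedure as before, and $M, N$ are recovered as the preimages in $G$ of $Ve$ and $V(1-e)$.

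The main obstacle is maintaining compatibility with the involution throughout: generic polynomial-time algorithms for Wedderburn decomposition, radical computation, and idempotent lifting must be refined so that their outputs either are self-adjoint, are paired by $*$, or satisfy $e^* = 1 - e$ as appropriate. The delicate step is in part (ii), where the Lagrangian search inside each $*$-fixed simple component of $\bar A$ must be performed uniformly across the possible hermitian/symplectic types; once this is done, everything else is a bounded sequence of linear-algebra operations over a finite field of size polynomial in the input length.
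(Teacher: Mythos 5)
Your proposal is correct and follows essentially the same route as the paper: both parts are reduced via Lemma~\ref{lem:selfadj-idemp} to finding self-adjoint (resp.\ hyperbolic) idempotents in the adjoint $*$-algebra, by computing the radical, passing to the $*$-simple summands of $A/J$, solving the corresponding isotropic/Lagrangian decomposition problem for the associated forms there, and lifting idempotents back over $J$. The only difference is cosmetic: for part (i) the paper simply cites the algorithm of Wilson's earlier work rather than re-deriving it as you do.
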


\begin{proof}
A proof of (i) may be found in~\cite{Wilson:find-central}*{Theorem 1.1}.  

The proof for (ii) is similar so we just give a sketch. Let $G$ be the given $p$-group,
and $\circ=\circ_G\colon G/Z(G)\times G/Z(G)\bmto G'$.
Recall that we can compute generators for $A(\circ)$ as the solution of a system
of equations. Hence, by
Lemma~\ref{lem:selfadj-idemp}, it suffices to find an  idempotent 
$e\in A(\circ)$ such that $e^*=1-e$.  

Using \citelist{\cite{Wilson:unique-cent}\cite{BW:isom}\cite{BW:slope}} we begin
by constructing the Jacobson radical, $J$, of $A$, and then decomposing $A/J$
as a sum $S_1\oplus\ldots\oplus S_m$ of $*$-simple ideals.
Each $S_i$ is isomorphic to the
adjoint ring of a nondegenerate alternating, symmetric, or Hermitian form 
(where in the Hermitian case we permit a degenerate field extension
$K\oplus K$ -- also called exchange); see~\cite{Wilson:find-central}*{Section 5}. 

In a $*$-simple ring, an idempotent
$e$ with $e^*=1-e$ coincides with a decomposition of the associated 
form into a pair of totally isotropic subspaces, which are readily computed
using a Gram-Schmidt type algorithm.  
Thus, within each $S_i$ we locate an idempotent
$\hat{e}_i$ with $\hat{e}_i^*=1-\hat{e}_i$.  Let $\hat{e}=\sum_i \hat{e}_i$ and use the idempotent 
lifting formula in~\cite{Wilson:unique-cent}*{Section 5.4} to lift $\hat{e}\in A/J$
to an idempotent $e\in A$ with $e^*=1-e$.
\end{proof}

We remark that one can lift idempotents more efficiently when $p$ is odd 
by computing a $*$-invariant semisimple complement
to the radical, thereby reducing the problem to the semisimple $*$-rings~\cite{BW:isom}.

%%%%
\subsection{The centrally indecomposable groups of genus 2}
\label{subsec:index-theory}
We focus now on the centrally indecomposable groups of genus 2. 
Our immediate goal is to classify the adjoint rings of the commutation bimap of these groups.
The ultimate goal is to prove Theorem~\ref{thm:indecomps}, but this must wait until Section~\ref{subsec:indecomp-proof}.

We begin with a classification by Kronecker and Dieudonn\'{e}~\cite{Dieudonne} of pairs of matrices,
which later led to classifications of pairs of forms by Scharlau~\cite{Scharlau}.  Independently --
and prior to Scharlau -- Bond \cite{Bond}*{p. 608} applied the same treatment to attempt to classify
nilpotent Lie algebras of genus $2$.
\smallskip

The following fundamental result is folklore (see, for example,~\cite{GG}*{Section 1}).

%%%
\begin{lemma}
\label{lem:isotropics}
If $\{\Phi_1,\Phi_2\}$ is a pair of alternating forms on a 
finite-dimensional vector space $V$, then
there is a decomposition $V=E\oplus F$ such that $E$ and $F$ are totally 
isotropic with respect to both forms.
\end{lemma}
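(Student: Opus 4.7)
The plan is to induct on $\dim V$, with base case $\dim V \leq 1$ being trivial since alternating forms vanish there. For the inductive step, first I would reduce to the case where the common radical $R := \mathrm{rad}(\Phi_1) \cap \mathrm{rad}(\Phi_2)$ is zero: if $R \neq 0$, choose any complement $V = R \oplus V'$, apply the inductive hypothesis to $(\Phi_1|_{V'}, \Phi_2|_{V'})$ to obtain $V' = E' \oplus F'$, split $R = R_E \oplus R_F$ arbitrarily, and set $E := E' \oplus R_E$, $F := F' \oplus R_F$. Because $R$ pairs trivially against all of $V$ under both forms, $E$ and $F$ remain doubly totally isotropic.

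With $R = 0$, I would then aim to arrange that some combination $\lambda_1 \Phi_1 + \lambda_2 \Phi_2$ is nondegenerate; over a large enough field this is generic in $(\lambda_1, \lambda_2)$, since the degeneracy locus is cut out by the Pfaffian. After an invertible change of basis on $W = k^2$ (which does not affect the meaning of ``doubly totally isotropic''), I would assume $\Phi_1$ itself is nondegenerate, and define $T \in \End(V)$ by $\Phi_2(u, v) = \Phi_1(Tu, v)$. Since both forms are alternating, $T$ is self-adjoint with respect to $\Phi_1$: $\Phi_1(Tu, v) = \Phi_2(u, v) = -\Phi_2(v, u) = -\Phi_1(Tv, u) = \Phi_1(u, Tv)$.

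Next, I would use the primary decomposition $V = \bigoplus_i V_i$ according to irreducible factors of the characteristic polynomial of $T$. A standard coprimality argument, exploiting the self-adjointness of $T$, shows that distinct primaries are $\Phi_1$-orthogonal; combined with nondegeneracy of $\Phi_1$, this forces each $V_i$ to be both $\Phi_1$-nondegenerate and $T$-invariant. It suffices to handle a single $V_i$: there $T$ has minimal polynomial a power of one irreducible, and the Kronecker-Dieudonn\'{e} normal form for alternating pencils — equivalently, the classification of $k[T]$-modules carrying a compatible alternating form with $T$ self-adjoint — decomposes $V_i$ into indecomposable pieces, each of paired-Jordan-block type with a visible $T$-invariant Lagrangian decomposition $E_i \oplus F_i$. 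Summing across $i$ yields $V = E \oplus F$, both $T$-stable and $\Phi_1$-isotropic; and since $u \in E$ implies $Tu \in E$, the identity $\Phi_2(u, v) = \Phi_1(Tu, v)$ gives $\Phi_2$-isotropy of $E$ as well (and likewise for $F$).

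The hard part will be the \emph{singular-pencil} case, in which the Pfaffian of $\lambda_1 \Phi_1 + \lambda_2 \Phi_2$ vanishes identically, together with the possibility over small finite fields that no $k$-rational $(\lambda_1, \lambda_2)$ produces a nondegenerate combination. My preferred way to handle both at once is to extend scalars to a sufficiently large $k'$, execute the adjoint argument there, and then descend via a Galois-equivariant choice of decomposition — feasible because the primary pieces and indecomposable block structure are canonical, hence $\mathrm{Gal}(k'/k)$-stable. Alternatively, one can simply invoke the full Kronecker-Dieudonn\'{e} classification of alternating pencils (as in \cite{GG}), in which the singular Kronecker blocks themselves come equipped with an explicit Lagrangian decomposition.
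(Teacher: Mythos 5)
First, for context: the paper does not prove this lemma at all --- it is recorded as folklore with a pointer to \cite{GG}*{Section 1} --- so there is no internal proof to compare against. Your treatment of the \emph{regular} case (pass to a nondegenerate member of the pencil, form $T$ with $\Phi_2(u,v)=\Phi_1(Tu,v)$, check self-adjointness, split into primary components, extract a $T$-invariant Lagrangian pair from each) is indeed the standard Goldstein--Guralnick route, and the self-adjointness computation is correct. Be aware, though, that the existence of a $T$-invariant Lagrangian splitting of each primary component is not a formal consequence of self-adjointness: it rests on the stronger fact that $\Phi_1(Tv,v)=0$ for all $v$ (equivalently, that $\Phi_2$ is genuinely alternating, which in characteristic $2$ is more than skew-symmetry), and this is precisely what forces the elementary divisors of $T$ to occur in pairs. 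That pairing is the crux of the lemma in the sloped case, and your write-up delegates it entirely to an unspecified classification theorem.

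The genuine gap is the singular-pencil case, and your ``preferred'' fix for it cannot work. If $\det(x\Phi_1+y\Phi_2)$ is the zero polynomial --- as it must be whenever $\dim V$ is odd, and as it is for every flat indecomposable appearing in Theorem~\ref{thm:indecomps}(ii) --- then no extension of scalars produces a nondegenerate member of the pencil: the determinant vanishes identically over $\overline{k}$ as well. So ``extend scalars, run the adjoint argument, descend'' never gets started in exactly the cases you single out as hard; scalar extension only addresses the separate (and milder) issue of a regular pencil having no $k$-rational nondegenerate member. Even there, your descent is under-justified: the indecomposable summands and the Lagrangian complements inside them are highly non-canonical, so Galois-equivariance of your choices must be arranged, not merely asserted. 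Your fallback --- invoke the full classification of pairs of alternating forms, in which every canonical block visibly splits as $E\oplus F$ --- does yield the lemma, but within this paper that classification (Proposition~\ref{prop:slope-flat}) is itself \emph{deduced from} the present lemma together with Theorem~\ref{thm:Dieudonne}, so as written the appeal is circular relative to this development. To make the singular case honest you would need either to cite an independent source for the classification of alternating pencils (e.g., \cite{Scharlau}) or to treat the singular Kronecker blocks directly, e.g., by the minimal-index reduction that peels off an odd-dimensional block together with its explicit isotropic splitting.
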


Let $\Psi_1,\Psi_2$ be a pair of $c\times d$ matrices with entries in a field $k$.
As transformations from $k^c$ to $k^d$ we say that
a $\Psi_1,\Psi_2$ is {\em decomposable} if we can find a bases for $k^c$ and $k^d$ 
such that $\Psi_1=\left[ \begin{smallmatrix} \Psi_{11} & 0 \\ 0 & \Psi_{12} \end{smallmatrix} \right]$ and
$\Psi_2=\left[ \begin{smallmatrix} \Psi_{21} & 0 \\ 0 & \Psi_{22} \end{smallmatrix} \right]$; otherwise the pair
is {\em indecomposable}. 
Indecomposable pairs are classified in the following classical result.
(An algorithm for this result is given in Section~\ref{subsec:standard}.)

%%%%%
\begin{thm}[Kronecker-Dieudonn{\'e} \cite{Dieudonne}]
\label{thm:Dieudonne}
If $\Psi_1,\Psi_2$ is an indecomposable pair of matrices with entries in a field $k$, then
one of the following holds:
\begin{enumerate}[(i)]
\item $\Psi_1,\Psi_2\in\M_d(k)$ and there are
bases such that $\Psi_1=I_d$ and $\Psi_2=C(a(x))$, 
where $a(x)$ is a power of an irreducible polynomial and $C(a(x))$ its companion matrix; or
\item $\Psi_1,\Psi_2\in\M_{d,d+1}(k)$ and there are bases such that $\Psi_1=[I_d | 0]$ and 
$\Psi_2= [0 | I_d]$.
\end{enumerate}
\end{thm}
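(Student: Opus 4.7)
The plan is to recast the classification in the language of matrix pencils. Writing $\Psi(x) = \Psi_1 + x \Psi_2 \in \M_{c,d}(k[x])$, the equivalence of pairs $(\Psi_1,\Psi_2)\mapsto(P\Psi_1 Q, P\Psi_2 Q)$ by $(P,Q)\in \GL_c(k)\times \GL_d(k)$ coincides with \emph{strict equivalence} of pencils in the sense of Kronecker--Weierstrass. I will split the argument according to whether the pencil is \emph{regular} ($c=d$ and $\det \Psi(x)\not\equiv 0$) or \emph{singular} (every maximal minor of $\Psi(x)$ vanishes identically).

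In the regular case, I will first arrange that $\Psi_1$ is invertible: when $|k|$ is large enough one picks $\alpha\in k$ with $\det(\Psi_1+\alpha\Psi_2)\neq 0$ and performs the substitution $x\mapsto x-\alpha$; for small fields I will extend scalars to $\bar{k}$, apply the reduction there, and descend using the uniqueness of rational canonical form. Once $\Psi_1$ is invertible, left-multiplying by $\Psi_1^{-1}$ reduces the equivalence to simultaneous conjugation of the single endomorphism $T=\Psi_1^{-1}\Psi_2$. Indecomposability of the pair is then indecomposability of the $k[x]$-module $(k^d,T)$, which by the structure theorem for modules over a PID occurs precisely when the module is $k[x]/(a(x))$ for a prime power $a(x)$. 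Reading off the companion matrix yields case (i).

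In the singular case, I will exploit the fact that $\Psi(x)$ has a nonzero kernel vector in $k(x)^d$; clearing denominators and choosing one of minimal degree $\epsilon$ yields $v(x)=v_0+xv_1+\cdots+x^\epsilon v_\epsilon\in k[x]^d$ with $(\Psi_1+x\Psi_2)v(x)=0$. Minimality of $\epsilon$ forces $\{v_0,\ldots,v_\epsilon\}$ to be linearly independent in $k^d$, and the recurrences $\Psi_1 v_i=-\Psi_2 v_{i-1}$ force $\{\Psi_1 v_1,\ldots,\Psi_1 v_\epsilon\}$ to be linearly independent in $k^c$. After completing both collections to bases, the pair takes the block form
\[
\Psi_1 = \begin{bmatrix} [I_\epsilon\mid 0] & * \\ 0 & \Psi_1' \end{bmatrix}, \qquad
\Psi_2 = \begin{bmatrix} [0\mid I_\epsilon] & * \\ 0 & \Psi_2' \end{bmatrix},
\]
and I will then clear the starred blocks by a linear-algebra manipulation, producing a direct-sum decomposition. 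Indecomposability of the original pair forces the complement $(\Psi_1',\Psi_2')$ to be trivial, whence $c=\epsilon$, $d=\epsilon+1$, and the pair is in the form of case (ii). The dual dimensions $c=d+1$ are handled by applying the same argument to the transposed pair.

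The main obstacle will be the clearance step in the singular case --- eliminating the starred blocks without disturbing the $L_\epsilon$ normal form. This is the classical ``reduction by minimal indices'' argument, and its success depends critically on the minimality of $\epsilon$: without it, the residual block $(\Psi_1',\Psi_2')$ could still contain a kernel polynomial of degree at most $\epsilon$, obstructing the inductive step. Everything else --- the descent to $k$ in the regular case, the structure-theorem argument, and the verification of indecomposability criteria --- is routine given the Smith normal form over $k[x]$ and the rational canonical form theorem.
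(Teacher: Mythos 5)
The paper does not actually prove this statement --- it is quoted from Dieudonn\'e, and Section~\ref{subsec:standard} only supplies an algorithm for converting a pair into the stated normal forms --- so your proposal is being measured against the classical Kronecker--Weierstrass argument, and in outline it \emph{is} that argument. Your treatment of the singular case is sound: the minimal-degree kernel vector, the two independence lemmas, and the clearance of the off-diagonal blocks (with the correct observation that minimality of $\epsilon$ is what makes the clearance and the induction succeed) is exactly the standard reduction by minimal indices, and indecomposability then forces the complementary block to be empty.

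Two points need repair. First, and substantively: in the regular case the substitution $x\mapsto x-\alpha$ is \emph{not} a strict equivalence of the pair --- it replaces $(\Psi_1,\Psi_2)$ by $(\Psi_1-\alpha\Psi_2,\Psi_2)$, which is generally not of the form $(X\Psi_1Y,X\Psi_2Y)$. You classify the substituted pair and stop, but the theorem is about the original one. The issue is not cosmetic: for $\Psi_1=N$ nilpotent and $\Psi_2=I_d$ the pencil is regular and indecomposable, yet no choice of bases makes $\Psi_1=I_d$; conclusion (i) only holds after interchanging the two matrices, consistent with the paper's remark that the pairs are unordered. The clean fix is to show that for an indecomposable regular pencil at least one of $\Psi_1,\Psi_2$ is already invertible (if both were singular, then $\lambda$ and $\mu$ would both divide $\det(\lambda\Psi_1+\mu\Psi_2)$ and the primary decomposition of the associated module would split the pair), swap if necessary, and then run your cyclic-module argument; alternatively, keep the substitution but conjugate back at the end, verifying that the resulting operator generates the same local algebra so the module remains cyclic with prime-power annihilator. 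Second, a smaller slip: your definition of ``singular'' (all maximal minors of $\Psi(x)$ vanish identically) excludes the non-square full-rank pencils --- in particular $[I_d\,|\,0]+x[0\,|\,I_d]$ itself has a nonvanishing $d\times d$ minor. The correct dichotomy is ``square with $\det\Psi(x)\not\equiv 0$'' versus ``$\Psi(x)$ or its transpose has a nontrivial kernel over $k(x)$''; your kernel-vector argument is in fact pitched at the latter class, so only the case division, not the argument, needs rewording.
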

%%%%%

Note that this result asserts a canonical description of indecomposable pairs up to the action
$\{\Psi_1,\Psi_2\}\mapsto \{X\Psi_1 Y, X\Psi_2 Y\}$.  If we constrain the problem so that $X=Y^{-1}$ (and hence 
that the matrices are square) the classification problem becomes wild.  I.e. $k\langle x_1,x_2\rangle$ 
is represented in $\M_d(k)$ by $x_i\mapsto \Psi_i$ defining a $k\langle x_1,x_2\rangle$-module on $k^d$.  Conjugation of $\Psi_i$ by
$Y$ amounts to module isomorphism and this is the definition of wild representations.  Similarly, increasing from 
pairs of matrices to triples gives rise to another wild problem; cf. \citelist{\cite{BLS}\cite{BDLST}}.
\smallskip

We use Theorem~\ref{thm:Dieudonne} now to classify 
pairs of forms associated to centrally indecomposable $p$-groups of genus 2.

\begin{prop}\label{prop:slope-flat}
If $G$ is a centrally indecomposable $p$-group of genus $2$ over a field $k$, then $\circ_G\colon k^d\times k^d\bmto k^2$ 
is isometric to a bimap represented by a pair
\begin{align}
\Phi_1=\begin{bmatrix} 0 & \Psi_1 \\ -\Psi_1^{{\rm tr}} & 0 \end{bmatrix}~~\mbox{and}~~
\Phi_2 = \begin{bmatrix} 0 & \Psi_2 \\ -\Psi_2^{{\rm tr}} & 0 \end{bmatrix}, 
\end{align}
of alternating forms, where the pair $\{\Psi_1,\Psi_2\}$ is given by
Theorem~\ref{thm:Dieudonne}  part (i) or (ii) according to whether $d$ is even or odd,
respectively.
\end{prop}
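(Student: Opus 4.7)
The plan is to combine Lemma~\ref{lem:isotropics} with Theorem~\ref{thm:Dieudonne}, translating the central indecomposability of $G$ into indecomposability of the pair of rectangular matrices that the commutation bimap induces via a common totally isotropic decomposition.

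First, by Theorem~\ref{thm:baer} and the genus $2$ hypothesis, $\circ_G$ is a fully nondegenerate alternating $k$-bimap on $V = k^d$ with values in $W = k^2$, so fixing a basis of $W$ presents $\circ_G$ as a pair $(\Phi_1, \Phi_2)$ of alternating forms on $V$. Apply Lemma~\ref{lem:isotropics} to write $V = E \oplus F$ with both summands simultaneously totally isotropic for $\Phi_1$ and $\Phi_2$. Ordered bases of $E$ and $F$ then display each $\Phi_i$ in precisely the claimed block shape, for some matrix $\Psi_i$ of size $(\dim E) \times (\dim F)$; moreover, a change of basis $M_E$ on $E$ and $M_F$ on $F$ replaces $\Psi_i$ by $M_E^{\tr} \Psi_i M_F$, so the equivalence class of $(\Psi_1, \Psi_2)$ under the action of Theorem~\ref{thm:Dieudonne} is intrinsic to $\circ_G$.

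The critical step is to argue that $(\Psi_1, \Psi_2)$ is indecomposable; here central indecomposability of $G$ enters. Suppose otherwise, so that nontrivial decompositions $E = E_1 \oplus E_2$ and $F = F_1 \oplus F_2$ exist in which each $\Psi_i$ is block diagonal. Setting $V_j := E_j \oplus F_j$ yields $V = V_1 \oplus V_2$ with $V_1 \circ_G V_2 = 0$: the contributions $E_a \circ E_b$ and $F_a \circ F_b$ vanish because $E$ and $F$ are totally isotropic, while $E_a \circ F_b$ vanishes for $a \neq b$ by the block structure. Taking preimages $H_j$ of $V_j$ under the quotient $G \to V$ then produces proper subgroups with $G = H_1 H_2$ and $[H_1, H_2] = 1$ (since for $h_1 \in H_1$, $h_2 \in H_2$ the commutator $[h_1, h_2] \in G'$ is the image of $(h_1 Z(G), h_2 Z(G)) \in V_1 \times V_2$ under $\circ_G$, which lies in $V_1 \circ_G V_2 = 0$). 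By Definition~\ref{defn:cent-decomp} this is a nontrivial central decomposition of $G$, contradicting the hypothesis; hence $(\Psi_1, \Psi_2)$ is indecomposable, and Theorem~\ref{thm:Dieudonne} applies.

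A parity count finishes the proof: in case~(i) of Theorem~\ref{thm:Dieudonne} the matrices are square $d' \times d'$, giving $\dim V = 2d'$ even, whereas in case~(ii) they are $d' \times (d'+1)$, giving $\dim V = 2d'+1$ odd. Thus $d$ even forces case~(i) and $d$ odd forces case~(ii), and Theorem~\ref{thm:Dieudonne} supplies the canonical forms of $\Psi_1, \Psi_2$ after one final basis change on $V$. The main obstacle is the bridge between the combinatorial decomposition of $(\Psi_1, \Psi_2)$ as a pair of rectangular matrices and a bona fide central decomposition of the group $G$; once that correspondence is secured the remaining identifications of matrix blocks and dimensions are routine.
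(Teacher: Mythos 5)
Your proof is correct and follows essentially the same route as the paper: represent $\circ_G$ by a pair of alternating forms, invoke Lemma~\ref{lem:isotropics} for a common totally isotropic splitting $V=E\oplus F$, deduce that the corner-block pair $\{\Psi_1,\Psi_2\}$ is indecomposable, and finish with Theorem~\ref{thm:Dieudonne} and the parity count. The only difference is in how indecomposability of $\{\Psi_1,\Psi_2\}$ is justified: the paper cites the idempotent correspondence of Lemma~\ref{lem:selfadj-idemp}(i) to pass from central indecomposability of $G$ to orthogonal indecomposability of the form pair, whereas you construct a central decomposition of $G$ directly from a putative block decomposition of the rectangular matrix pair — a clean, self-contained way to supply the bridge the paper leaves implicit.
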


\begin{proof}
Let $G$ be a centrally indecomposable group of genus $2$ over a field $k$.
Regard $V=G/Z(G)$ and $W=G'$ as $k$-spaces, so that $\dim_kW=2$, and
consider the $k$-bimap $\circ=\circ_G\colon V\times V\bmto W$.
As in Section~\ref{subsec:bimaps}, $\circ$ is represented by a pair 
$\{\Phi_1,\Phi_2\}$ of alternating forms over $k$. As $G$ is centrally indecomposable,
by Lemma~\ref{lem:selfadj-idemp}(i) $\{\Phi_1,\Phi_2\}$ is orthogonally indecomposable.
Further, by 
Lemma~\ref{lem:isotropics}, there is a decomposition $V=E\oplus F$ with
$E\circ E=0=F\circ F$. Thus, the restriction of $\circ$ to $E\times F$ 
yields an indecomposable pair of matrices (the ``corner blocks").   
Using appropriate basis changes in $E$ and $F$  we have
\begin{align}
\Phi_1=\begin{bmatrix} 0 & \Psi_1 \\ -\Psi_1^{{\rm tr}} & 0 \end{bmatrix}~~\mbox{and}~~
\Phi_2 = \begin{bmatrix} 0 & \Psi_2 \\ -\Psi_2^{{\rm tr}} & 0 \end{bmatrix}, 
\end{align}
where the pair $\{\Psi_1,\Psi_2\}$ is given by
Theorem~\ref{thm:Dieudonne}  part (i) or (ii) depending on 
whether $\dim_k V$ is even or odd,
respectively.
\end{proof}

The dichotomy in Proposition~\ref{prop:slope-flat} 
-- and its eventual incarnation in Theorem~\ref{thm:indecomps} --
is fundamental to our algorithm, and we introduce some helpful 
terminology from~\cite{BW:slope} for easy reference. 

\begin{defn}
A centrally indecomposable group, $G$, of genus 2 
is said to be {\em sloped} if it
is type (i), and {\em flat} if it is type (ii). 
We extend the appropriate notion of sloped and flat 
to the associated $k$-bimap, $\circ_G$.
(Note, if
the $k$-dimension of $G/\Phi(G)$ is even then $G$ is sloped, and otherwise it is flat.)
\end{defn}

We stress that Proposition~\ref{prop:slope-flat} is not a classification of centrally 
indecomposable groups of genus $2$, even if their exponent is $p$.  That would first require  
a classification of irreducible polynomials.  Secondly -- and much more troubling for our algorithms --
{\em pairs of forms
are not unique to a group of genus 2.}

\begin{ex}
Let $k=\mathbb{Z}_3$, and put $a(x)=x^2+1$ and $b(x)=x^2+x+2$.  
The Heisenberg groups $H(k[x]/(a(x)^2))$ and $H(k[x]/(b(x)^2))$ are isomorphic (they are both over $\mathbb{F}_9$) and centrally 
indecomposable of genus $2$, but there
are choices of generators for these groups where the associated pairs of forms are non-isometric.
E.g.: natural choices give rise to bimaps represented, respectively, by the pairs
\begin{align*}
 \left\{\begin{bmatrix} 0 & I_4\\ -I_4 & 0 \end{bmatrix},\begin{bmatrix} 0 & C(a(x)^2) \\ -C(a(x)^2)^{{\rm tr}} & 0 \end{bmatrix}\right\},\\
  \left\{\begin{bmatrix} 0 & I_4\\ -I_4 & 0 \end{bmatrix},\begin{bmatrix} 0 & C(b(x)^2) \\ -C(b(x)^2)^{{\rm tr}} & 0 \end{bmatrix}\right\}.
\end{align*}
\end{ex}

Another way in which choice of generators removes a canonical relationship to Kronecker type arguments is
seen by constructing groups of genus $2$ as {\em quotients} of Heisenberg groups.

\begin{ex}\label{ex:strange}
Let $k=\mathbb{Z}_3$, and put $a_1(x)=x^4+x^3+x^2+1$, $a_2(x)=x^4+2x^2+2$, and $a_3(x)=x^4+x^3+2x+1$.  
Set $H_i=H(k[x]/(a_i(x)))$.  As each $a_i(x)$ is irreducible $H_1\cong H_2\cong H_3$.  Now, with respect to the natural basis $\{1,x,x^2,x^3\}$,
define 
\begin{align*}
	M_i & = \left\{\begin{bmatrix} 1 & 0 & a+bx \\ 0 & 1 & 0 \\ 0 & 0 & 1 \end{bmatrix} : a,b\in k\right\}\leq H_i.
\end{align*} 
Evidently, 
$G_i=H_i/M_i$ has genus $2$ over $k$ and is centrally indecomposable.  
However, $G_1\not\cong G_2$ while $G_2\cong G_3$ (there is an isomorphism induced by sending $x^2\mapsto 2x^2+x^3$ and $x^3\mapsto x^2$).
That $G_1\not\cong G_2$ can be settled either exhaustively or by apply the algorithm of Theorem~\ref{thm:main1}.
\end{ex}

We will soon need the following consequence of Proposition~\ref{prop:slope-flat}.

\begin{cor}
\label{cor:types}
Let $G$ be a centrally indecomposable $p$-group of genus $2$ over a field $k$, 
$A=A(\circ_G)$ its ring of adjoints, and $J=J(A)$ the Jacobson radical of $A$.
\begin{enumerate}[(i)]
\item $G$ is sloped if, and only if, $A/J\cong \M_2(K)$, where $K/k$ a field extension 
and the induced involution on $A/J$ is $\left[\begin{smallmatrix} a & b \\ c & d \end{smallmatrix}\right]\mapsto \left[\begin{smallmatrix} d & -b \\ -c & a \end{smallmatrix}\right]$.
\item $G$ is flat if, and only if, $A/J\cong K\oplus K$ with involution $(a,b)\mapsto (b,a)$.
\end{enumerate}
\end{cor}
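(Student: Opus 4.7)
The plan is to exploit the explicit normal forms from Proposition~\ref{prop:slope-flat} together with the rigidity imposed by central indecomposability. By Lemma~\ref{lem:selfadj-idemp}(i), a centrally indecomposable $G$ has $A=A(\circ_G)$ with no nontrivial self-adjoint idempotents, so $A/J$ is $*$-simple. The Wedderburn classification of $*$-simple artinian rings leaves only two possibilities: a matrix ring $\M_n(D)$ with an involution, or the exchange $\M_n(D)\oplus\M_n(D)^{\mathrm{op}}$. I will show these correspond precisely to the sloped and flat cases by computing $A$ directly from the two normal forms, and then deduce the form of the involution.

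For the sloped case, I fix the basis in which $\Phi_1=\bigl[\begin{smallmatrix} 0 & I_d \\ -I_d & 0 \end{smallmatrix}\bigr]$ and $\Phi_2=\bigl[\begin{smallmatrix} 0 & C \\ -C^{\mathrm{tr}} & 0 \end{smallmatrix}\bigr]$, where $C=C(a(x))$ and $a(x)=q(x)^c$ with $q$ irreducible. An adjoint pair $(L_\mu,R_\mu)=(X,Y)$ satisfies $X^{\mathrm{tr}}\Phi_i=\Phi_i Y$; since $\Phi_1$ is invertible, $Y=\Phi_1^{-1}X^{\mathrm{tr}}\Phi_1$ and the second equation becomes the requirement that $X^{\mathrm{tr}}$ centralize $T:=\Phi_2\Phi_1^{-1}=\mathrm{diag}(C,C^{\mathrm{tr}})$. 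Writing $X$ in $d\times d$ blocks, this forces the diagonal blocks to commute with $C$ and the off-diagonal blocks to intertwine $C$ and $C^{\mathrm{tr}}$. Since the centralizer of $C$ in $\M_d(k)$ is $K_0:=k[C]\cong k[x]/(a(x))$ and $C\sim C^{\mathrm{tr}}$ via a fixed similarity, each of the four blocks is parametrized by a copy of $K_0$. Assembling these identifications yields $A\cong\M_2(K_0)$ as $K_0$-algebras. Because $K_0$ is a local commutative ring, $J(A)=\M_2(J(K_0))$ and
\begin{equation*}
A/J \cong \M_2\bigl(K_0/J(K_0)\bigr) = \M_2(K), \qquad K := k[x]/(q(x)),
\end{equation*}
which is a field extension of $k$. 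Finally, tracking $X\mapsto Y=\Phi_1^{-1}X^{\mathrm{tr}}\Phi_1$ through the block identification shows that the involution on $\M_2(K_0)$ is precisely $\bigl[\begin{smallmatrix} a & b \\ c & d \end{smallmatrix}\bigr]\mapsto\bigl[\begin{smallmatrix} d & -b \\ -c & a \end{smallmatrix}\bigr]$, the classical adjoint/symplectic involution; this descends to the same formula on $A/J$.

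For the flat case, $\Phi_1$ and $\Phi_2$ are both singular, so the centralizer trick does not apply directly. I solve the adjoint equations arising from $\Psi_1=[\,I_d\mid 0\,]$ and $\Psi_2=[\,0\mid I_d\,]$ by hand. A direct (but routine) linear-algebra computation shows that $A$ contains two orthogonal idempotents $e_1,e_2$ summing to $1$ --- these come from the natural decomposition $V=E\oplus F$ of Lemma~\ref{lem:isotropics} --- with $e_1^*=e_2$, and further that $e_1Ae_2=0$ while $e_2Ae_1$ is a nilpotent two-sided ideal. Hence $J(A)=e_2Ae_1$ and $A/J\cong e_1Ae_1\oplus e_2Ae_2$. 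A short separate check identifies each corner $e_iAe_i$ with a common field $K$, giving $A/J\cong K\oplus K$, and since $*$ swaps $e_1$ and $e_2$ by construction, the involution is the exchange $(a,b)\mapsto(b,a)$.

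The main obstacle is the careful bookkeeping of the involution in the sloped case: the similarity $P$ with $C^{\mathrm{tr}}=P^{-1}CP$ is not canonical, and the signs in the final formula depend on how $P$ interacts with the transpose appearing in $Y=\Phi_1^{-1}X^{\mathrm{tr}}\Phi_1$. One must choose $P$ compatibly (for instance, the standard symmetric similarity for companion matrices) so that the classical adjoint formula emerges cleanly. With both cases in hand, the ``iff'' assertions are automatic: $\M_2(K)$ is simple and $K\oplus K$ is not, so the two quotients cannot coincide, and the sloped/flat dichotomy of Proposition~\ref{prop:slope-flat} matches the dichotomy in the statement.
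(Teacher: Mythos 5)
Your argument is correct, but it takes a genuinely different route from the paper's. The paper never computes $A$ explicitly: it observes that central indecomposability forbids nontrivial self-adjoint idempotents (Lemma~\ref{lem:selfadj-idemp}(i)) while Proposition~\ref{prop:slope-flat} forces a hyperbolic idempotent, notes that both properties lift over $J$ (so they hold in $A$ iff they hold in $A/J$), and then quotes Osborn's classification of $*$-simple rings with these two constraints to conclude that $M_2(K)$ with the symplectic involution and $K\oplus K$ with exchange are the only options; the parity of $\dim_k G/Z(G)$ then sorts the two cases. You instead compute $A$ from the Kronecker normal forms: in the sloped case $A=C_{\M_{2n}(k)}(\sigma)\cong \M_2(k[x]/(a(x)^c))$ with local coefficient ring, so $A/J\cong\M_2(K)$ with $K$ the residue field, and tracking $L\mapsto \Phi_1L^{\mathrm{tr}}\Phi_1^{-1}$ through the block identification (with a symmetric intertwiner between $C$ and $C^{\mathrm{tr}}$) does yield exactly the stated involution; in the flat case the block-by-block solution of the adjoint equations gives two $1$-dimensional diagonal corners swapped by $*$, one vanishing off-diagonal corner, and one square-zero off-diagonal corner equal to $J$. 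Your approach is longer and the flat computation is only asserted rather than carried out (it does check out: the diagonal corners are forced to be scalars because the Kronecker block $L_n$ has trivial endomorphism ring, and exactly one off-diagonal corner vanishes), but it is self-contained, avoids the appeal to Osborn's theorem and to $*$-compatible idempotent lifting, and identifies $K$ concretely ($k[x]/(q(x))$ in the sloped case, $k$ itself in the flat case), which is more than the corollary asserts. Your opening appeal to the Wedderburn classification of $*$-simple rings is superfluous once the direct computations are in place, and the closing ``iff'' argument via simplicity of $\M_2(K)$ versus non-simplicity of $K\oplus K$ is a clean way to get both converses from the exhaustive dichotomy of Proposition~\ref{prop:slope-flat}.
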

\begin{proof}
By Lemma~\ref{lem:selfadj-idemp}(i), the only idempotents of $A$ with $e =e^*$ are $0$ and $1$.  Furthermore,
by Proposition~\ref{prop:slope-flat}, $G$ is hyperbolic and so $A$ has a hyperbolic idempotent $e^*=1-e$.
As $A$ is Artinian (in fact finite), idempotents lift over $J$, and by \cite{Wilson:find-central}*{Section 5.4} they lift retaining
the self-adjoint and hyperbolic relationships respectively.  Therefore, $A$ has such idempotents if, and only if, $A/J$ has
them.  Now we apply a classification due to Osborn (see \cite{Wilson:unique-cent}*{Theorem~4.26}) to assert
the only choices for $A/J$ are $M_2(K)$ and $K\oplus K$ together with the given involutions.  In the first case the
dimension of $G/Z(G)$ is even and hence corresponds to the sloped case.  In the second case the group is flat.
\end{proof}

%%%%%%%
\subsection{Uniqueness of orthogonal and hyperbolic decompositions}
\label{subsec:transitivity}
As we mentioned earlier, our algorithms for bimaps of genus 2 
will utilize both types of decomposition described in the previous
section. When we do so, we shall need to know that our 
particular choices are in fact generic. More precisely, we shall require the
following transitivity facts. (Recall, from Proposition~\ref{prop:isoclinic-isomorphic},
that isoclinisms coincide with isomorphisms for groups of exponent $p$.) 

\begin{thm}
\label{thm:transitive}
If $G$ be a finite $p$-group of genus 2, then the group
of autoclinisms of $G$ acts transitively on
\begin{enumerate}[(a)]
\item the set of fully-refined central decompositions of $G$, and
\item the set of hyperbolic pairs of $G$.
\end{enumerate}
In fact the subgroup of autoclinisms that centralize $Z(G)$ is transitive on both sets.
\end{thm}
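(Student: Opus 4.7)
The plan is to translate both assertions to the level of the commutation bimap $\circ = \circ_G$ via the Baer correspondence (Theorem~\ref{thm:baer}) and Proposition~\ref{prop:autoclinism}, which identify the group of autoclinisms of $G$ with $\pseudo(\circ)$. Under this identification, the subgroup of autoclinisms that centralize $Z(G)$ contains $\isom(\circ)$, which is naturally isomorphic to the unitary group $U(A) = \{u \in A^\times : u^*u = 1\}$ of the $*$-ring $A = A(\circ)$ acting on $A$ by inner $*$-automorphisms. By Lemma~\ref{lem:selfadj-idemp}, fully-refined central decompositions of $G$ correspond bijectively to complete systems $\{e_1,\dots,e_s\}$ of pairwise orthogonal \emph{primitive} self-adjoint idempotents of $A$ summing to $1$, while hyperbolic pairs correspond to hyperbolic idempotents $e \in A$ (with $e^* = 1-e$) taken up to the involution $e \leftrightarrow 1-e$. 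Thus the theorem reduces to showing that $U(A)$ acts transitively on each of these two types of idempotent data.

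The genus $2$ hypothesis controls the semisimple quotient $A/J$, where $J=J(A)$: writing $G$ as a central product of centrally indecomposable factors and applying Corollary~\ref{cor:types} to each, every $*$-simple summand of $A/J$ is either $\M_2(K)$ with the symplectic involution or $K \oplus K$ with the exchange involution (simpler summands arising from genus~$1$ constituents are handled uniformly). In either type a direct calculation shows the only self-adjoint idempotents are $0$ and $1$; hence any complete orthogonal system of primitive self-adjoint idempotents in $A/J$ coincides, up to reordering, with the canonical system of $*$-central primitive idempotents. Similarly, within each $*$-simple summand the hyperbolic idempotents form a single $U(A/J)$-orbit modulo $e \leftrightarrow 1 - e$: in the $\M_2(K)$ case this is the classical transitivity of the symplectic group on ordered hyperbolic decompositions of the symplectic plane $K^2$, and in the $K \oplus K$ case there are only two hyperbolic idempotents, $(1,0)$ and $(0,1)$, which define the same unordered hyperbolic pair.

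What remains is to lift the transitivity from $A/J$ up to $A$. Given two systems $\{e_i\}$ and $\{f_i\}$ in $A$ whose images in $A/J$ agree after relabeling, I would use a standard induction over the filtration $A \supseteq J \supseteq J^2 \supseteq \cdots$: at each stage the obstruction to conjugating the $k$-th approximation lies in $J^k/J^{k+1}$ and can be cleared by an element of $1 + J^k$ which, by an averaging argument available for odd $p$, may be chosen to satisfy $u^*u = 1$. The needed $*$-equivariant idempotent lifting formulas are precisely those developed in \cite{Wilson:unique-cent}*{Section~5.4} and sharpened in \cite{BW:isom}; they also ensure that orthogonality of the $e_i$ and the hyperbolic relation $e^*=1-e$ are preserved at every step. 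The same induction handles the hyperbolic case, treating $\{e,\,1-e\}$ as a complete system of two idempotents exchanged by $*$.

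The principal technical obstacle is precisely this lifting step: one must verify that the conjugating unit at each stage of the $J$-adic induction can be chosen in the unitary group $U(A)$ rather than merely in $A^\times$. This is where the $*$-invariance of $1+J^k$ and the odd-characteristic averaging trick enter; in characteristic $2$ one instead appeals to the explicit formulas in the cited references. Once this $*$-equivariant lifting is established, transitivity of $U(A)$ -- and therefore of the autoclinism subgroup centralizing $Z(G)$ -- follows for both fully-refined central decompositions and hyperbolic pairs, proving both parts of the theorem simultaneously.
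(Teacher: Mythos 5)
Your overall strategy---pass to the $*$-ring $A=A(\circ_G)$ via Lemma~\ref{lem:selfadj-idemp}, analyse the semisimple quotient $A/J$ using the genus-$2$ classification of involutions, and lift over the radical---is the same route the paper takes (part (a) is delegated to \cite{Wilson:unique-cent}*{Theorem 6.6} together with the observation from Corollary~\ref{cor:types} that no summand has orthogonal type; part (b) is Witt's lemma plus lifting over the radical using the involution). However, there is a concrete error in your description of $A/J$. You obtain the $*$-simple summands of $A/J$ by ``applying Corollary~\ref{cor:types} to each'' centrally indecomposable factor, conclude that every summand is $\M_2(K)$ or $K\oplus K$, and hence that the only self-adjoint idempotents in each summand are $0$ and $1$, so that the complete system of primitive self-adjoint idempotents of $A/J$ is \emph{unique} up to reordering. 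This fails as soon as two indecomposable central factors are pseudo-isometric: the adjoint ring of an orthogonal sum is strictly larger than the direct sum of the adjoint rings of the pieces (it contains the off-diagonal adjoint elements intertwining isometric summands), and isotypic components merge into a single $*$-simple summand such as $\M_{2m}(K)$ with a symplectic-type involution, or an exchange ring $\M_m(K)\oplus\M_m(K)$. Such a ring has many nontrivial self-adjoint idempotents and many complete orthogonal systems of primitive ones, so your uniqueness claim---and with it part (a) as you have argued it---collapses exactly in the case where the theorem has content. (The paper's discussion in Section~\ref{subsec:large} of the permutation group $\Pi$ of isotypic components inside $N^*(A)$ makes the same point.)

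The repair is standard but essential: within each (possibly large) $*$-simple summand, complete systems of primitive self-adjoint idempotents correspond to fully refined orthogonal decompositions of the underlying nondegenerate alternating, Hermitian, or exchange form, and Witt's theorem gives transitivity of the unitary group on these; one then lifts over $J$ as you describe. The genus-$2$ hypothesis enters precisely here: Corollary~\ref{cor:types} guarantees that no summand carries a symmetric (orthogonal-type) form, the one case over a finite field where transitivity on point decompositions fails---which is why the paper warns immediately after the theorem that general central decompositions do not enjoy such transitivity. Your part (b) is closer to correct, since you do invoke symplectic Witt transitivity, but it needs the same correction (the relevant symplectic space is $K^{2m}$, not $K^2$); your final lifting paragraph is fine in outline and matches \cite{Wilson:unique-cent}*{Section 5.4}.
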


\begin{proof}
For (a), refer to~\cite{Wilson:unique-cent}*{Theorem 6.6}. 
Corollary~\ref{cor:types} tells us that $\circ_G$ has no indecomposable summands 
of orthogonal type, and so $\isom(\circ_G)$ is transitive
on its fully-refined orthogonal decompositions.  

The proof of (b) is similar. By Witt's lemma, the isometries of a nondegenerate form
act transitively on the set of hyperbolic bases.  Then, using
involutions, one lifts this action over the radical.
\end{proof}

We stress that central product decompositions \emph{do not}, in general, 
possess such transitivity \cite{Wilson:unique-cent}*{Theorem~1.1(ii)}, so
groups of genus 2 are somewhat special in this regard.  Even so,
Theorem~\ref{thm:transitive} does not give rise to a theorem of Krull-Schmidt 
type~\cite{Wilson:unique-cent}*{Definition~2.6}. 
Indeed, as illustrated by the example below, identical sets of centrally indecomposable
groups may occur as fully-refined central decompositions of
non-isoclinic groups of genus 2. 
This hints at the difficulties in using central products within isomorphism tests.

%%%
\begin{ex}
\label{subsec:special-ex}
Let $k$ be any field, and $K=k(\omega)$ a quadratic extension of $k$.  
Put $H=H_1(k)\times H_1(k)\times H_1(K)$, a direct product of Heisenberg groups,
and let
\begin{align*}
		N_1 & =\left\langle
		\left(\begin{bmatrix} 1 & 0 & 1\\ 0 & 1 & 0 \\ 0 & 0 & 1 \end{bmatrix}, I_3, 
			\begin{bmatrix} 1 & 0 & -1 \\ 0 & 1 & 0 \\ 0 & 0 & 1 \end{bmatrix}\right),
\left(I_3, \begin{bmatrix} 1 & 0 & -1\\ 0 & 1 & 0 \\ 0 & 0 & 1 \end{bmatrix},
			\begin{bmatrix} 1 & 0 & -\omega \\ 0 & 1 & 0 \\ 0 & 0 & 1 \end{bmatrix}\right) 			
			\right\rangle\\
		N_2 & =\left\langle
		\left(\begin{bmatrix} 1 & 0 & 1\\ 0 & 1 & 0 \\ 0 & 0 & 1 \end{bmatrix}, 
		\begin{bmatrix} 1 & 0 & -1\\ 0 & 1 & 0 \\ 0 & 0 & 1 \end{bmatrix}, 
			I_3\right),
\left(I_3, \begin{bmatrix} 1 & 0 & 1\\ 0 & 1 & 0 \\ 0 & 0 & 1 \end{bmatrix},
\begin{bmatrix} 1 & 0 & -1\\ 0 & 1 & 0 \\ 0 & 0 & 1 \end{bmatrix}
			\right) 			
			\right\rangle.
\end{align*}
Then each $N_i$ is normal in $H$, and the groups $G_i=H/N_i$
have genus $2$ over $k$.  Moreover,
each group has a full-refined central decomposition
consisting of two copies of a group $X$, and one copy of a group $Y$,
yet $G_1$ and $G_2$ are non-isomorphic (in fact non-isoclinic).
For, if
$\omega$ has minimum polynomial $x^2-ax-b$.
Then, for $i=1,2$, the bimap $\circ_{G_i}$ is represented by 
$\left\{\left[\begin{smallmatrix} 0 & I_4 \\ -I_4 & 0\end{smallmatrix}\right],
\left[\begin{smallmatrix} 0_4 & L_i\\  -L_i^t &  0_4 \end{smallmatrix}\right]\right\}$,
where
\begin{align*}
	L_1 & =  \begin{bmatrix} 0 & & \\  & 1 & \\ & & 0 & 1 \\ & & b & a \end{bmatrix} &
	L_2 & =  \begin{bmatrix} 0 & & \\  & 0 & \\ & & 0 & 1 \\ & & b & a \end{bmatrix}.
\end{align*}
Since $L_1$ has three distinct eigenvalues in $K$, and $L_2$ only two,
the centralizers, $C(L_1)$ and $C(L_2)$, are non-isomorphic algebras.  
For $i=1,2$,
by~\cite{BW:slope}*{Lemma 3.2}, $A(\circ_{G_i})$ is isomorphic to $\mathbb{M}_2(C(L_i))$,
so $A(\circ_{G_1})$ and $A(\circ_{G_2})$ 
are non-isomorphic algebras. It follows that 
$G_1$ and $G_2$ are non-isoclinic.
\end{ex}

\subsection{A characterization of the indecomposable groups of genus 2}
\label{subsec:indecomp-proof}
We are almost ready to prove Theorem~\ref{thm:indecomps}.
Our approach requires that we examine the adjoint ring of the
commutation bimap of these groups in greater depth.
In particular, we provide a rather complete description of the bimap
obtained by forming a tensor product over such rings (Theorem~\ref{thm:tensor-bimap}). 
As well as helping us 
prove Theorem~\ref{thm:indecomps}, this will provide the 
foundation for the adjoint-tensor isomorphism test for the centrally
indecomposable groups of genus 2 that we present in Section~\ref{sec:iso-ind-genus2}.
\medskip

We will need the following convenient characterization of the adjoint ring of an indecomposable
bimap of the sloped type.

\begin{lemma}[\cite{BW:slope}*{Lemma 3.2(i)}]
\label{lem:centralizer}
Let $\circ\colon k^d\times k^d\bmto k^2$ be an alternating bimap
represented by a pair $\{\Phi_1,\Phi_2\}$ of forms with $\Phi_1$ invertible. Then
\begin{align*}
	A(\circ) & = C_{\mathbb{M}_d(k)}(\sigma), & \mbox{where}~\sigma  = \Phi_2\Phi_1^{-1}.
\end{align*}
In particular, $Z(A(\circ))=k[\sigma]\cong k[x]/(m(x))$, where $m(x)$ is the minimum polynomial of 
$\sigma$. If $\circ$ is indecomposable, then $m(x)=a(x)^e$ with $a(x)$ irreducible.
\end{lemma}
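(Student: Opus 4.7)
My plan is to translate the adjoint defining condition directly into matrix equations involving $\Phi_1$ and $\Phi_2$, and then to exploit invertibility of $\Phi_1$ to collapse the two-sided data $(L_\mu, R_\mu)$ down to a single matrix commuting with $\sigma$.

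Concretely, I will fix a basis $\{w_1,w_2\}$ of $k^2$ so that $u \circ v = (u\Phi_1 v^{\tr})w_1 + (u\Phi_2 v^{\tr})w_2$ for row vectors $u,v$. Representing $\mu \in A(\circ)$ by matrices $L = L_\mu$, $R = R_\mu \in \M_d(k)$ via $u\mu = uL$ and $\mu v = vR$, the condition $u\mu \circ v = u \circ \mu v$ holding for all $u, v$ is equivalent to the two identities
\[
L\,\Phi_i \;=\; \Phi_i\,R^{\tr}, \qquad i = 1, 2.
\]
Since $\Phi_1$ is invertible, the $i = 1$ identity gives $R^{\tr} = \Phi_1^{-1} L \Phi_1$; substituting into the $i = 2$ identity and rearranging reduces the remaining constraint to $L\sigma = \sigma L$ with $\sigma = \Phi_2 \Phi_1^{-1}$. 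Conversely, any $L \in C_{\M_d(k)}(\sigma)$ together with this formula for $R$ satisfies both identities, so $\mu \mapsto L_\mu$ is a ring isomorphism $A(\circ) \xrightarrow{\sim} C_{\M_d(k)}(\sigma)$.

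For the center I will invoke the classical fact that $Z(C_{\M_d(k)}(\sigma)) = k[\sigma]$ for every square matrix $\sigma$, which follows from the structure of centralizers via rational canonical form. The kernel of the evaluation map $k[x] \to k[\sigma]$ is generated by the minimum polynomial $m(x)$ of $\sigma$, giving $k[\sigma] \cong k[x]/(m(x))$.

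The main obstacle will be the final assertion. I will argue contrapositively: assume $m(x) = f(x)g(x)$ with $\gcd(f,g) = 1$ and both factors nonconstant. The Chinese Remainder Theorem splits $k[\sigma] = Z(A(\circ))$ as a nontrivial direct sum of ideals, yielding a central orthogonal idempotent pair $(e, 1-e)$ in $A(\circ)$. To obtain the contradiction via Lemma~\ref{lem:selfadj-idemp}(i), I still need each such $e$ to be self-adjoint under the natural involution $(L,R) \mapsto (R,L)$ of $A(\circ)$. Here the skew-symmetry $\Phi_i^{\tr} = -\Phi_i$ does the crucial work: a short computation gives $\sigma^{\tr} = \Phi_1^{-1}\sigma\Phi_1$, and hence for any polynomial $p$,
\[
\Phi_1\, p(\sigma)^{\tr} \;=\; p(\sigma)\,\Phi_1.
\]
Writing $L_e = p(\sigma)$ and using $R_e^{\tr} = \Phi_1^{-1}L_e\Phi_1$ then forces $R_e = L_e$, i.e.\ $e^* = e$. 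A pair of nontrivial orthogonal self-adjoint idempotents contradicts central indecomposability of $\circ$, so $m(x)$ must be a power of a single irreducible polynomial.
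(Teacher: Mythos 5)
Your argument is correct. Note that the paper does not prove this lemma at all --- it is imported verbatim from \cite{BW:slope}*{Lemma 3.2(i)} --- so there is no in-paper proof to compare against; what you have written is a valid self-contained derivation. The reduction of the adjoint conditions $L\Phi_i=\Phi_i R^{\tr}$ ($i=1,2$) to $L\sigma=\sigma L$ via invertibility of $\Phi_1$ is exactly right, the identification $Z(C_{\M_d(k)}(\sigma))=k[\sigma]\cong k[x]/(m(x))$ is the standard bicommutant fact, and your handling of the last assertion is the one nonobvious step: the computation $\sigma^{\tr}=\Phi_1^{-1}\sigma\Phi_1$ (using $\Phi_i^{\tr}=-\Phi_i$) correctly shows that every central idempotent $e$ with $L_e=p(\sigma)$ satisfies $R_e=L_e$, i.e.\ $e^*=e$, so a nontrivial coprime factorization of $m(x)$ would produce a nontrivial orthogonal (central) decomposition, contradicting indecomposability via Lemma~\ref{lem:selfadj-idemp}(i). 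The only cosmetic point is that ``indecomposable'' for the bimap means orthogonally indecomposable (equivalently, the associated group is centrally indecomposable), which is exactly the hypothesis your idempotent argument contradicts.
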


Note, Lemma~\ref{lem:centralizer} 
requires only that $\Phi_1$ is invertible and
makes no assumption about the indecomposability of the bimap --
we shall have more to say on this point after we prove the following 
crucial result.

\begin{thm}
\label{thm:tensor-bimap}
Let $k$ be a field and $\circ\colon k^{2n}\times k^{2n}\bmto k^2$ 
an indecomposable, alternating bimap represented by a pair $\{\Phi_1,\Phi_2\}$ 
with $\Phi_1$ invertible. Let
$\sigma=\Phi_2\Phi_1^{-1}$, and let $m(x)\in k[x]$ be
the minimal polynomial of $\sigma$. Then 
\begin{align}
\label{eq:wedge-iso}
	k^{2m}\otimes_{A(\circ)} k^{2n} & = k^{2n}\wedge_{A(\circ)} k^{2n} \cong k[x]/(m(x)),
\end{align} 
and $\otimes\colon k^{2n}\times k^{2n}\bmto k^{2n}\otimes_{A(\circ)} k^{2n}$ 
is a fully nondegenerate alternating $k[x]/(m(x))$-form.
Furthermore, the isomorphism in~(\ref{eq:wedge-iso}) can be computed in polynomial time.
\end{thm}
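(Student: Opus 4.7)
The plan is to decode the tensor product via Morita equivalence between $A:=A(\circ)$ and its center. By Lemma~\ref{lem:centralizer}, $R:=Z(A)=k[\sigma]\cong k[x]/(m(x))$ with $m(x)=a(x)^e$ for some irreducible $a(x)$, because $\circ$ is indecomposable. Since $\dim_k V=2n$ is even, $\circ$ is sloped, so Corollary~\ref{cor:types}(i) gives $A/J(A)\cong M_2(K)$ with $K=k[x]/(a(x))$, carrying the symplectic involution $\mu^*=J^{-1}\mu^\top J$, $J=\bigl[\begin{smallmatrix}0&1\\-1&0\end{smallmatrix}\bigr]$. Lifting matrix units from $A/J(A)$ to $A$ while preserving the $*$-structure (via the formulas of \cite{Wilson:find-central}*{Section~5.4}) yields a $*$-ring isomorphism $A\cong M_2(R)$; the involution fixes $R$ pointwise and acts by negation on the off-diagonal corners $eA(1-e)$ and $(1-e)Ae$, where $e$ is the lift of the $(1,1)$ matrix unit (a hyperbolic idempotent with $e^*=1-e$).

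Next, indecomposability of $\circ$ forces $V$ to be indecomposable as an $A$-module, and $\dim_k V=2\dim_k R$ then identifies $V\cong R^2$ as a left $A$-module via Morita; the right action is $v\cdot \mu=\mu^* v$. Using Theorem~\ref{thm:cent-hyp-alg}(ii) to produce a hyperbolic idempotent $e$, pick any nonzero $y\in(1-e)Ae$ (so that $y^*=-y$ by the preceding paragraph), take a free $R$-generator $f_1\in eV$, and set $f_2:=yf_1\in(1-e)V$, yielding an $R$-basis $\{f_1,f_2\}$ of $V$. The key relations are
\begin{equation*}
f_1\otimes f_1=(f_1\cdot e)\otimes f_1=((1-e)f_1)\otimes f_1=0,\qquad f_2\otimes f_2=0,
\end{equation*}
and
\begin{equation*}
f_1\otimes f_2=f_1\otimes yf_1=(f_1\cdot y)\otimes f_1=(y^* f_1)\otimes f_1=-f_2\otimes f_1.
\end{equation*}
Thus $V\otimes_A V$ is cyclic over $R$ with generator $f_1\otimes f_2$; Morita equivalence between $M_2(R)$ and $R$ shows this generator is free, so $V\otimes_A V\cong R=k[x]/(m(x))$. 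Expanding $u=u_1 f_1+u_2 f_2$ and $v=v_1 f_1+v_2 f_2$ gives $u\otimes v=(u_1 v_2-u_2 v_1)(f_1\otimes f_2)$; commutativity of $R$ then yields $v\otimes v=0$, so $V\otimes_A V=V\wedge_A V$. The bimap $\otimes$ is visibly $R$-bilinear, alternating, and fully nondegenerate (the radicals vanish because $(f_1,-)$ and $(-,f_2)$ already surject onto $R$).

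For polynomial-time computation, $A=C(\sigma)$ and $R=k[\sigma]$ are produced by linear algebra and minimum-polynomial computation; $e$ by Theorem~\ref{thm:cent-hyp-alg}(ii); $y$ as any nonzero vector in the $R$-rank-$1$ module $(1-e)Ae$; and $f_1$ as any nonzero vector in $eV$. The isomorphism $V\otimes_A V\to R$ then sends $u\otimes v$ to $u_1 v_2-u_2 v_1$ after expressing $u,v$ in the basis $\{f_1,f_2\}$. The principal obstacle is steering the matrix-unit lifting through the $*$-structure of $A$ so that the symplectic involution on $A/J(A)$ lifts correctly, and then carefully tracking the right $A$-action on $V$ via this involution; once these are in place, the entire theorem reduces to the standard Morita identification recalled above.
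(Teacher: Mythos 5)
Your route---identify $A=A(\circ)$ with $\M_2(R)$ for $R=k[x]/(m(x))$ as a $*$-ring, identify $V$ with $R^2$, and read off $V\otimes_AV\cong R$ by Morita theory---is in substance the paper's own computation done abstractly rather than in a canonical basis, but two steps as written have genuine gaps. The more serious one is the assertion that the involution negates the corners, so that $y^*=-y$: this is exactly where the alternating relation $f_1\otimes f_2=-f_2\otimes f_1$ (hence $\otimes=\wedge$) comes from, you yourself flag it as ``the principal obstacle,'' and the idempotent-lifting formulas you cite do not deliver it---they lift idempotents, not the action of $*$ on the off-diagonal Peirce components. Abstractly, $*$ restricted to the free rank-one $R$-module $(1-e)Ae$ is multiplication by some $\rho\in R$ with $\rho^2=1$ and $\rho\equiv-1\pmod{J(R)}$. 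In odd characteristic this forces $\rho=-1$, but in characteristic $2$ every $\rho=1+j$ with $j^2=0$ satisfies both constraints and defines a legitimate involution of $\M_2(R)$ fixing $R$ and sending $e$ to $1-e$, so the claim does not follow formally from the $*$-structure. One must use the specific adjoint involution: in the canonical form of Proposition~\ref{prop:slope-flat} a corner element is a matrix $b$ with $b\Psi^{\rm tr}=\Psi b$ for $\Psi$ a companion (hence nonderogatory) matrix, every such intertwiner is symmetric (Taussky--Zassenhaus), and therefore $b^*=-b^{\rm tr}=-b$. Some argument of this kind has to be supplied before the alternating property is proved.

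The second gap is the phrase ``any nonzero $y\in(1-e)Ae$.'' Since $m(x)=a(x)^c$ with possibly $c>1$, $R$ is local but not a field, and a nonzero element of a free rank-one $R$-module need not generate it. If $y\in J(A)$ then $f_2=yf_1$ does not generate $(1-e)V$ over $R$, $\{f_1,f_2\}$ is not an $R$-basis of $V$, and $f_1\otimes f_2$ is not a free generator of $V\otimes_AV$, so the formula $u\otimes v=(u_1v_2-u_2v_1)(f_1\otimes f_2)$ and the nondegeneracy claim both collapse; you must take $y\notin J(A)$. Two smaller points: $V$ is \emph{not} indecomposable as an $A$-module (it splits as $eV\oplus(1-e)V$), so $V\cong R^2$ should instead be deduced from faithfulness together with $\dim_kV=2n=2\dim_kR$; and your argument is tied to the indecomposable case through Corollary~\ref{cor:types}, whereas the paper's computation is arranged to work whenever $\Phi_1$ is invertible, which is the generality in which the theorem is reused later (Remark~\ref{rem:general-slope} and Section~\ref{sec:iso-genus2}).
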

\begin{proof}
By Lemma~\ref{lem:isotropics}, there is a decomposition
$k^{2n}=E\oplus F$ with $E\circ E=0=F\circ F$, and by Theorem~\ref{thm:transitive}
this decomposition is unique up to a choice of basis. Thus, we may assume
\begin{align}
\label{eq:slope-pair}
\Phi_1=
\begin{bmatrix}
0 & I_n  \\
-I_n & 0  \\ 
\end{bmatrix}
~~~\mbox{and}~~~
\Phi_2=
\begin{bmatrix}
0  & \Psi \\
-\Psi^{\tr} & 0
\end{bmatrix},
\end{align}
where $\Psi$ is in Rational Canonical Form, so that
\begin{align}
\label{eq:slope}
	\sigma=\Phi_2\Phi_1^{-1} & = \begin{bmatrix}
		\Psi & 0 \\ 0 & \Psi^{\tr}
	\end{bmatrix}.
\end{align}
By Lemma~\ref{lem:centralizer}, $A(\circ)=C_{\mathbb{M}_{2n}(k)}(\sigma)=
\mathbb{M}_2(C_{\mathbb{M}_n(k)}(\Psi))$.  
The structure of centralizer matrices is well-studied and is determined by the representation of $\Psi$.  
In particular, there is a divisor chain $a_s(x)|a_{s-1}(x)|\cdots |a_1(x)$ of
$m(x)=a_1(x)$ and, using companion matrices $C(a_i)$, we have
\begin{align}
	\Psi = \diag(C(a_1), \ldots, C(a_s)).
\end{align}
Correspondingly $E=E_1\oplus \cdots \oplus E_s$ as a $k[x]$-module, with
$x$ acting as $\Psi$. 
The centralizer of $\Psi$ is a checkered matrix,
\begin{align*}
	C_{\mathbb{M}_{n}(k)}(\Psi) & = \left\{ [[M_{ij}]] : 1\leq i,j\leq m, ~~M_{ij}\in \hom_{k[x]}(E_i,E_j)\right\}.
\end{align*}
Now, the essential trick is to observe that for $\alpha_i \in E_i$, 
because $E_1$ is a faithful 
representation of $k[\Psi]$, 
$\hom_{k[x]}(E_1,E_i)=\hom_{k[x]}(k[x],E_i)\cong E_i$ as 
$k[x]$-modules.
Therefore, there exists a matrix
\begin{align*}
M_{\alpha} &=
\begin{bmatrix}
\tilde{\alpha}_1 & \tilde{\alpha}_2 & \cdots & \tilde{\alpha}_s \\
0 & 0 & \cdots & 0 \\
\vdots & \vdots & \ddots & \vdots \\
0 & 0 & \cdots & 0
\end{bmatrix}\in C_{\mathbb{M}_{n}(k)}(\Psi),
& e_1M_{\alpha} & =(\alpha_1,\ldots,\alpha_s)=\alpha\in E.
\end{align*}
Thus, $E=e_1 C_{\mathbb{M}_{d/2}(k)}(\Psi)$ is a cyclic $k[x]$-module, as is $F$.

We can now elucidate the structure of the bimap $\otimes_A$. Let $\alpha=(\alpha_1,\ldots,\alpha_s)\in E$ and 
$\beta=(\beta_1,\ldots,\beta_s)\in F$. Then,
\begin{align*}
\alpha \otimes_A \beta &=
e_1\begin{bmatrix} M_{\alpha} &  \\  & 0_n \end{bmatrix} \otimes_A
f_1
\begin{bmatrix}
0_n &  \\
 & M_{\beta}
\end{bmatrix}\\
&=
e_1\begin{bmatrix} M_{\alpha} &  \\  & 0_0 \end{bmatrix}\begin{bmatrix} M_{\beta}^{{\rm tr}} &  
\\  & 0_n \end{bmatrix} \otimes_A f_1\\
&= (\alpha \cdot \beta)(e_1\otimes_Af_1).
\end{align*}
here $\alpha\cdot\beta=\alpha_1\beta_1+\cdots  +\alpha_m\beta_m$ is the usual dot-product.
In particular, $k^{2n}\otimes_A k^{2n}=k[x](e_1\otimes f_1)$ is a cyclic 
$k[x]$-module, so 
the tensor product is a form.
All of the necessary constructions are carried out in polynomial time so the result follows.
\end{proof}

\begin{remark}
\label{rem:general-slope}
Although our application to Theorem~\ref{thm:indecomps}
concerns {\em indecomposable} bimaps of genus 2,
Theorem~\ref{thm:tensor-bimap} again requires only that $\Phi_1$ is invertible 
(just like Lemma~\ref{lem:centralizer}). 
This is explained in Section~\ref{sec:iso-genus2}.
We extend the notion of ``sloped" to any alternating bimap of genus 2
represented by $\{\Phi_1,\Phi_2\}$ with $\Phi_1$ invertible, 
and refer to $\sigma=\Phi_2\Phi_1^{-1}$ as a {\em slope}
of $\circ$. The slope is crucial to the work in~\cite{BW:slope} but also features
in earlier works such as~\citelist{\cite{GG}\cite{B-F}}.
\end{remark}

We are finally ready to prove Theorem~\ref{thm:indecomps}, restated here
for convenience.

%%%%%
\begin{thm*}
Let $G$ be a centrally indecomposable $p$-group of genus $2$ over a field $k$. 
Then $G$ is isoclinic to
one of the following two types of groups:
\begin{enumerate}[(i)]
\item (sloped case) a central quotient of a Heisenberg group,
\begin{align*}
	H &  =\left\{ \begin{bmatrix}
	1 & e & w \\ 0 & 1 & f \\ 0 & 0 & 1
	\end{bmatrix} \colon 
	\begin{array}{c} e,f,w\in k[x]/(a(x)^c),\\ a(x) \textnormal{ irreducible } \end{array}\right\},
\end{align*} by a subgroup $L\leq [H,H]\cong k^{m}$, $m=c\deg m(x)$, which is a $k$-subspace of codimension $2$; or

\item (flat case) the matrix group
\begin{align*}
	H^{\flat} & = 
	\left\{
	\left[\begin{array}{c|c|c}
		I_2 & \begin{array}{cccc} 
			e_1 & \cdots & e_m & 0 \\
			0 & e_1 & \cdots & e_m
		\end{array} & 
		\begin{array}{c} w_1 \\ w_2 \end{array}\\
		\hline		
		 & I_{m+1} & \begin{array}{c} f_0\\ \vdots \\ f_{m}\end{array}\\
		 \hline
		 & & 1
	\end{array}\right] \colon e_i, f_j,w_{\ell} \in k\right\}.
\end{align*} 
\end{enumerate}
\end{thm*}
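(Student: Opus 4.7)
The plan is to combine the canonical form for $\circ_G$ given by Proposition~\ref{prop:slope-flat} with the tensor-square computation of Theorem~\ref{thm:tensor-bimap}, and to transfer the resulting bimap descriptions back to groups via Baer's correspondence (Theorem~\ref{thm:baer}). First I would apply Proposition~\ref{prop:slope-flat} to $G$: it replaces $\circ_G$ by an indecomposable pair $\{\Phi_1,\Phi_2\}$ of alternating forms whose shape is prescribed by Theorem~\ref{thm:Dieudonne}, splitting the argument into the sloped case ($\dim_k V$ even, with $\Psi_2 = C(a(x)^c)$ for an irreducible $a(x)$) and the flat case ($\dim_k V$ odd, with $\Psi_1 = [I_m\mid 0]$ and $\Psi_2 = [0\mid I_m]$).

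For the sloped case, $\Phi_1$ is invertible, so Theorem~\ref{thm:tensor-bimap} applies, identifying $V\otimes_{A(\circ_G)}V$ with the commutative local ring $R := k[x]/(a(x)^c)$ and exhibiting $\otimes\colon V\times V \bmto R$ as a fully nondegenerate alternating $R$-form of rank $2$. Any such form on a rank-$2$ free module is hyperbolic, hence isometric to the standard symplectic $R$-form on $R^2$; but that is precisely $\circ_H$ for the Heisenberg group $H = H_1(R)$ of~(\ref{eq:Heisenberg}). Because $\circ_G$ is $k^2$-valued and factors through $\otimes$, there is a $k$-linear surjection $\pi\colon R \to k^2$ with $\circ_G = \pi\circ\otimes$; let $L := \ker\pi$, a $k$-subspace of $[H,H]\cong R$ of codimension $2$, where $R$ has $k$-dimension $m = c\deg a(x)$. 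Since $[H,H]\leq Z(H)$, $L$ is central in $H$, so $H/L$ has class at most $2$ and its commutation bimap is $\pi\circ\circ_H = \circ_G$. Theorem~\ref{thm:baer} then yields that $G$ is isoclinic to $H/L$, giving description~(i).

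For the flat case I would verify (ii) by direct matrix computation. Write a general element of $H^{\flat}$ as $I + N$ with $N$ the strictly upper-triangular part displayed in the theorem; since $N^3 = 0$ the group has class $2$, and its commutator reduces to $[N_A, N_B] = N_AN_B - N_BN_A$, which survives only in the upper-right $2 \times 1$ block with value $E_A f_B - E_B f_A$, where $E_X$ denotes the $2\times(m+1)$ Toeplitz block carrying $e_1,\dots,e_m$ and $f_X$ the column $(f_0,\dots,f_m)^{\tr}$. Reading the two coordinates of this vector back as $k$-bilinear forms on the $m$-dimensional $e$-space and the $(m+1)$-dimensional $f$-space recovers exactly the pair $\Psi_1 = [I_m\mid 0]$ and $\Psi_2 = [0\mid I_m]$ predicted by the flat branch of Proposition~\ref{prop:slope-flat}. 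So $\circ_{H^{\flat}}$ is pseudo-isometric to $\circ_G$, and Baer's correspondence concludes this case as well.

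The hard part is the sloped case. Theorem~\ref{thm:tensor-bimap} does the structural heavy lifting, but one still has to check that every fully nondegenerate alternating rank-$2$ $R$-form over this commutative local ring is isometric to the Heisenberg form (a hyperbolic pair is supplied by Lemma~\ref{lem:isotropics}), and to track the Morita-type identification of $V$ with $R^2$ as an $A(\circ_G)$-module carefully enough to match the two bimaps on the nose rather than merely up to $k$-linear equivalence. By contrast, the flat case is pure bookkeeping once the block structure of $H^{\flat}$ is written down.
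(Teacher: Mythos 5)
Your proposal is correct and follows essentially the same route as the paper: Proposition~\ref{prop:slope-flat} supplies the sloped/flat dichotomy, Theorem~\ref{thm:tensor-bimap} is used in the sloped case to factor $\circ_G$ through the Heisenberg form $\circ_H$ over $R=k[x]/(a(x)^c)$ and identify $G$ with a central quotient $H/L$ by the codimension-$2$ kernel of the induced projection $R\to G'$, and the flat case is settled by direct computation of the commutator structure of $H^{\flat}$. Your extra remark that the rank-$2$ nondegenerate alternating $R$-form is necessarily the standard symplectic one is a point the paper leaves implicit in the explicit formula $\alpha\otimes_A\beta=(\alpha\cdot\beta)(e_1\otimes_A f_1)$ from the proof of Theorem~\ref{thm:tensor-bimap}, but it is a sound and equivalent way to close that step.
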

%%%%%

\noindent {\em Proof.}
Following Proposition~\ref{prop:slope-flat} we know that every centrally indecomposable group 
$G$ of genus $2$ determines a pair of alternating forms $\{\Psi_1,\Psi_2\}$ of the form 
of (\ref{eq:slope-pair}). It remains to connect the two possible matrix pairs to the corresponding 
matrix groups state in Theorem~\ref{thm:indecomps}.

Suppose $G$ is sloped and let $m(x)=a(x)^c$ be the minimum polynomial of 
$\circ_G:k^{2n}\times k^{2n}\bmto k^2$. Set $H=H(R)$ -- the Heisenberg group over $R=k[x]/(a(x)^c)$.  
Then  the commutation bimap $\circ_H\colon R^{2}\times R^{2}\bmto R$ is an alternating {\em $R$-form}.  Choose an isomorphism $\phi:H/Z(H)\to G/Z(G)$
(both are isomorphic to $k^{2n}$).
By Theorem~\ref{thm:tensor-bimap}, $\circ_G$ factors through $\circ_H$ yielding 
a projection $\pi\colon R\to G'$
with $(u\phi\circ_H v\phi)\pi= u\circ_G v$.  This gives rise to an isomorphism $\hat{\phi}\colon H'/\ker\pi\to G'$,  
and $(\phi,\hat{\phi})$ is a pseudo-isometry from $\circ_G$ to $\circ_H$. Furthermore, if
\begin{align*}
	M & = \left\{\begin{bmatrix} 1 & 0 & w \\ 0 & 1 & 0 \\ 0 &0 & 1 \end{bmatrix} : w\pi=0\right\},
\end{align*}
then $G$ is isoclinic to $H/M$.

Next, we consider the flat case.  Let $E_{ij}$ indicate the matrix with $1$ in position $ij$ and $0$ elsewhere. Set
\begin{align*}
	M_a & = I+E_{1(a+2)}+E_{2(a+3)}, & N_b & = I+E_{(2+b)(n+4)},~\mbox{and} & Z_c & = I+E_{c(n+4)}.
\end{align*}
Then $H^{\flat}=\langle M_1,\dots,M_n,N_0,\dots,N_{n},Z_1,Z_2\rangle$ and $[H^{\flat},H^{\flat}]=\langle Z_1, Z_2\rangle$.
From matrix multiplication with respect to these generators we find the system of form derived from $H^{\flat}$ is the unique
 indecomposable flat pair of dimension $2n+1$.  
 Thus, if $G$ is centrally indecomposable and flat, then $G$ and $H^{\flat}$ are isoclinic.
 \hfill $\Box$

\subsection{Generalized discriminants and Pfaffians}
\label{subsec:pfaffian}
We have previously stated that Theorem~\ref{thm:indecomps} by itself is not enough
to decide isomorphism among groups of genus 2 over a field $k$. So, what else is needed?
The main result of this section provides a necessary and sufficient condition for isomorphism
between groups of genus 2 whose indecomposable central factors are all sloped.
This in turn gives rise to an isomorphism test that is effective when $|k|$ is small.
\medskip

In the foregoing discussion of the bimap $\circ=\circ_G\colon V\times V\bmto W$ 
associated to such a group, $G$, we have worked exclusively with 
the $k$-space $V=G/Z(G)$.  Now we turn our attention to $W=G'=k^2$.
Recall from Proposition~\ref{prop:autoclinism} that
\[
1\to C_{\Aut(G)}(Z(G))\to \Aut(G)\to \Aut(W)
\]
is an exact sequence.
As $\Aut(G)$ acts on the centroid, $C(\circ)=k$,
its action on $W$ is $k$-semilinear so we may replace $\Aut(W)$ with ${\rm \Gamma L}(2,k)$.
If $\{\Phi_1,\Phi_2\}$ is a pair of alternating forms representing $\circ$, we wish
to study the action of the pseudo-isometry group $\pseudo(\circ)$ on the 
2-dimensional $k$-space spanned by this pair. 
In particular, we are interested in deciding when (and how) an
element of ${\rm \Gamma L}(2,k)$ lifts to $\pseudo(\circ)$.
We begin by generalizing the notions of ``discriminant" 
to arbitrary lists of square matrices, and of ``Pfaffian" to
pairs of alternating forms.
\medskip

The discriminant of a bilinear form $\Psi$ is the square class of the determinant.
In this way it is invariant up to isometry, since $\det(X\Psi X^{\tr})=\det(X)^2 \det(\Psi)$.
For systems $\{\Psi_1,\ldots,\Psi_m\}$ of  forms we define
the {\em generalized discriminant} as follows:
\begin{align*}
	\disc(\Psi_1,\ldots,\Psi_m)=\det(x_1 \Psi_1+\cdots + x_m\Psi_m)\in k[x_1,\dots,x_m].
\end{align*}
We shall work with such systems up to isotopism, 
which means we can modify by independent matrices $X$ and $Y$
to arrive at 
\begin{align*}
\disc(X\Psi_1 Y,\ldots,X\Psi_mY)=
\det(X)\det(Y)\disc(\Psi_1,\ldots,\Psi_m).
\end{align*}  
Thus, $\disc(\Psi_1,\ldots,\Psi_m)$ is a homogenous polynomial defined only up
to a non-zero scalar multiple -- this is an interesting isotopism invariant so long as 
$m>1$.  
\smallskip

Next, consider a pair $\{\Phi_1,\Phi_2\}$ of alternating forms
representing a sloped bimap of genus 2.  
There exist subspaces 
$E$ and $F$ of equal dimension relative to which,  
\begin{align}
\label{eq:hyp}
(i=1,2) & & \Phi_i= \begin{bmatrix} 0 & \Psi_i \\ -\Psi_i^t & 0 \end{bmatrix},
\end{align}
so $\disc(\Phi_1,\Phi_2)=\disc(\Psi_1,\Psi_2)^2$. We therefore define the {\em generalized Pfaffian}, 
\begin{align}
	\Pf(\Phi_1,\Phi_2)=\disc(\Psi_1,\Psi_2).
\end{align}
We will use a natural action of $\GammaL(2,k)$ on the
 homogeneous polynomials in $k[x,y]$.
 For $\hat{\alpha}=\left(\left[ \begin{smallmatrix} a & b \\ c & d \end{smallmatrix}\right],\tau\right)\in \GL(2,k) \rtimes \Gal(k)$,
 define
\begin{align}
\label{eq:homog-action}
 f^{\hat{\alpha}}(x,y) = f^\tau(ax+by,cx+dy).
\end{align}

We now integrate the Pfaffian of a sloped pair with our understanding 
of centrally indecomposable groups of genus 2 to interpret
an isomorphism invariant introduced by 
Vishnevetski{\u\i} \citelist{\cite{Vish:1}\cite{Vish:2}}
in the case when $k=\mathbb{Z}_p$.
A version of the following theorem was announced in~\cite{Vish:1}
but the proof only considered the forward direction. We need the converse 
for our  isomorphism test so we provide a complete proof.

\begin{thm}
\label{thm:det-method}
Let $\{\Phi_1,\Phi_2\}$ and $\{\Lambda_1,\Lambda_2\}$ be alternating $k$-forms,
each written relative to any fully refined orthogonal decomposition, so for $i=1,2$,
\begin{align*}
	\Phi_i & = \diag\left(\Phi_i^{(1)},\cdots, \Phi_i^{(s)}\right) &
	\Lambda_i & = \diag\left(\Lambda_i^{(1)},\cdots, \Lambda_i^{(t)}\right).
\end{align*}
For $\hat{\alpha}\in \GammaL(2,k)$, there is a pseudo-isometry $(\alpha,\hat{\alpha})$ from $\{\Phi_1,\Phi_2\}$ 
to $\{\Lambda_1,\Lambda_2\}$ if, and only if,
$s=t$ and there is a permutation $\sigma$ of $\{1,\ldots,s\}$ such that for all $i$,
\begin{align}
\label{eqn:det-method}
	\Pf\left(\Phi_1^{(i)},\Phi_2^{(i)}\right)^{\hat{\alpha}} & \equiv \Pf\left(\Lambda_1^{(i\sigma)},\Lambda_2^{(i\sigma)}\right) \pmod{k^{\times}}.
\end{align}
\end{thm}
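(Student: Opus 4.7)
The plan is to reduce both directions to a single transformation law for the Pfaffian under pseudo-isometry: if $(\alpha,\hat{\alpha})$ is a pseudo-isometry from $\{\Phi_1,\Phi_2\}$ to $\{\Lambda_1,\Lambda_2\}$, then
\[
\Pf(\Phi_1,\Phi_2)^{\hat{\alpha}} \equiv \Pf(\Lambda_1,\Lambda_2) \pmod{k^{\times}}.
\]
To derive this, I would first unwind the pseudo-isometry condition $u\alpha \circ_\Lambda v\alpha = (u\circ_\Phi v)\hat{\alpha}$ to obtain the matrix identity $\alpha(x\Lambda_1 + y\Lambda_2)\alpha^{\tr} = x'\Phi_1 + y'\Phi_2$, where $(x',y')$ is the image of $(x,y)$ under the linear component of $\hat{\alpha}$ (twisted by the Galois component $\tau$ acting on coefficients). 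The classical identity $\Pf(\alpha M \alpha^{\tr}) = \det(\alpha)\Pf(M)$ for an alternating $M$, applied with $M = x\Lambda_1 + y\Lambda_2$, then yields the displayed congruence once one notes that in the sloped hyperbolic presentation~(\ref{eq:hyp}) our $\Pf$ coincides with the classical Pfaffian of $x\Phi_i+y\Phi_j$.

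For the forward direction, I would invoke multiplicativity: since $\Phi_i$ and $\Lambda_i$ are block-diagonal in the fixed refinements, both sides of the global identity factor as products of block Pfaffians. By Proposition~\ref{prop:slope-flat} and Lemma~\ref{lem:centralizer}, each indecomposable sloped block $(\Phi_1^{(i)},\Phi_2^{(i)})$ contributes a factor equal, up to a scalar, to the homogenization of a prime power $a(z)^{c}$ with $a$ irreducible. Unique factorization in the UFD $k[x,y]$, applied to homogeneous polynomials, then yields a bijection $\sigma$ of block indices matching these prime-power factors pairwise; in particular $s = t$, and the desired block-wise congruence~(\ref{eqn:det-method}) follows.

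For the converse, the block-wise hypothesis says that after pre-composing $(\Phi_1^{(i)},\Phi_2^{(i)})$ with the $\hat{\alpha}$-action on $W$ and normalizing into the Dieudonn\'e canonical form of Theorem~\ref{thm:Dieudonne}(i), the resulting slope $\sigma_i = \Phi_2\Phi_1^{-1}$ has the same minimum polynomial as the slope of $(\Lambda_1^{(i\sigma)},\Lambda_2^{(i\sigma)})$. The Kronecker--Dieudonn\'e classification together with Lemma~\ref{lem:centralizer} then supplies a block isometry $\alpha_i$ realizing the $\hat{\alpha}$-twisted pseudo-isometry, and assembling $\alpha = \bigoplus_i \alpha_i$ (composed with the permutation $\sigma$) against the common $\hat{\alpha}$ produces the required global pseudo-isometry. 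The hardest step is this backward direction: one must verify that the abstract polynomial identity lifts to a genuine pseudo-isometry on each block, which requires tracking how $\hat{\alpha}$ transforms the slope via a M\"obius-type substitution while preserving invertibility of the first form (so that Dieudonn\'e's classification applies to the transformed pair), and then ensuring that the Galois-semilinear twist $\tau$ can be assembled consistently across all blocks without breaking the common choice of $\hat{\alpha}$.
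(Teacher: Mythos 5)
Your converse direction is essentially the paper's argument (reduce to indecomposable blocks with a common $\hat{\alpha}$, use the Pfaffian to pin down the slope's polynomial, and invoke Kronecker--Dieudonn\'e to produce each block map $\alpha_i$); the paper makes the ``M\"obius-type substitution'' issue you flag precise by factoring $\hat{\mu}$ through an LUP-decomposition and lifting each triangular or swap factor separately, and it disposes of the Galois twist $\tau$ at the end by $k$-bilinearity. That half is fine modulo those details.

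The forward direction, however, has a genuine gap. You try to derive the blockwise matching from the single global congruence $\Pf(\Phi_1,\Phi_2)^{\hat{\alpha}}\equiv\Pf(\Lambda_1,\Lambda_2)$ by unique factorization in $k[x,y]$. Unique factorization only tells you that the two sides have the same multiset of irreducible factors \emph{with total multiplicities}; it does not recover how those multiplicities are partitioned among blocks. Concretely, if $p(x,y)$ is the irreducible homogeneous polynomial attached to an irreducible $a(x)$, one side could consist of two blocks with Pfaffians $p^{2}$ and $p^{3}$ (i.e. $\{I,C(a^{2})\}$ and $\{I,C(a^{3})\}$) and the other of blocks with Pfaffians $p^{4}$ and $p$, or even a single block with Pfaffian $p^{5}$: the products agree, $s=t$ fails or the blockwise congruence~(\ref{eqn:det-method}) fails, and your argument cannot detect this. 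What rules such configurations out for genuinely pseudo-isometric pairs is not polynomial arithmetic but the uniqueness of fully-refined orthogonal decompositions: a pseudo-isometry carries one fully-refined decomposition to another, and by Theorem~\ref{thm:transitive} an isometry then matches the indecomposable blocks themselves, giving $s=t$ and the permutation $\sigma$ with $\left(\Phi_j^{(i)}\right)^{\hat{\alpha}}\equiv\Lambda_j^{(i\sigma)}$, after which the Pfaffian congruence is immediate. This transitivity theorem is the essential input in the paper's forward direction, and your proof never invokes it.
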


\begin{proof}
We begin with the forward direction.  
Assume $(\alpha,\hat{\alpha})\in\GammaL(2n,k)\times \GammaL(2,k)$ 
is a pseudo-isometry from $\circ$ to $\bullet$.   
The transitivity result in Theorem~\ref{thm:transitive} 
may be recast in the language of orthogonal decompositions for the
associated bimaps. In particular, there is an
isometry -- the image of $C_{\Aut(G)}(G')$ -- carrying the basis of the fully refined orthogonal
decomposition of $\{\alpha\Phi_1\alpha^{{\rm tr}},\alpha\Phi_2\alpha^{{\rm tr}}\}$
to that of $\{\Lambda_1,\Lambda_2\}$. As the former has $s$ terms, and the latter $t$ terms, it follows that
$s=t$. Furthermore, there is a permutation $\sigma$ of $\{1,\ldots,s\}$ such that for all $i\in \{1,\dots,s\}$ and
$j\in \{1,2\}$,  $\left(\Phi_j^{(i)}\right)^{\hat{\alpha}}\equiv \Lambda_j^{(i\sigma)}$.  Finally, observe that 
$\Pf\left(\Phi_1^{(i)},\Phi_2^{(i)}\right)^{\hat{\alpha}}\equiv \Pf\left(\left(\Phi_1^{(i)}\right)^{\hat{\alpha}},\left(\Phi_2^{(i)}\right)^{\hat{\alpha}}\right)$ 
so the claim follows.  (All equivalences are modulo $k^{\times}$).
\medskip

Now we consider the converse.  
Let $\hat{\alpha}=(\hat{\mu},\tau)\in\GL(2,k)\rtimes\Gal(k)$ satisfy (\ref{eqn:det-method}).
It suffices to find, for each $i$, an independent $\alpha_i$ such that 
$(\alpha_i,\hat{\alpha})$ is a pseudo-isometry from $\left\{\Phi_1^{(i)},\Phi_2^{(i)}\right\}$ 
to $\left\{\Lambda_1^{(i\sigma)},\Lambda_2^{(i\sigma)}\right\}$.  In particular 
we can assume $\{\Phi_1,\Phi_2\}$ and $\{\Lambda_1,\Lambda_2\}$ 
are indecomposable.  Following Proposition~\ref{prop:slope-flat} 
we may assume that there are bases relative to which
\begin{align*}
\Pf(\Phi_1,\Phi_2)= \det(xI+yC) & & \Pf(\Lambda_1,\Lambda_2)=\det(xI+yD).
\end{align*}
We produce a suitable lift of $\hat{\alpha}$ 
using an LUP-decomposition of $\hat{\mu}$.
\smallskip

First, suppose that $\hat{\mu}$ fixes an indeterminant  of $k[x,y]$.  Then, either 
$ \Pf(\Phi_1,\Phi_2)^{\hat{\mu}}\equiv \det( xI + yM )$ for some $M$,  or  
$\Pf(\Phi_1,\Phi_2)^{\hat{\mu}} \equiv \det(xN + yC)$ for some $N$.
Note that either option is equivalent to $\Pf(\Lambda_1,\Lambda_2)=\det(Ix+Dy)$ 
since $\Pf(\Phi_1,\Phi_2)^{\hat{\mu}}\equiv \Pf(\Lambda_1,\Lambda_2)$.
As $\{\Phi_1,\Phi_2\}$ and 
$\{\Lambda_1,\Lambda_2\}$ represent
 indecomposable bimaps, so $\{I,M\}$ (or $\{N,C\}$ as the case may be)
 and $\{I,D\}$
 are indecomposable pairs of matrices.
 Hence, by the Kronecker-Dieudonn\'e theorem, there are matrices $X$ and $Y$ such that 
$X\{I,D\}Y=\{I,C\}$.  If $\mu=\diag(X,Y^{\tr})$, then $(\mu,\hat{\mu})$ is
a pseudo-isometry from $\{\Phi_1,\Phi_2\}$ to $\{\Lambda_1,\Lambda_2\}$,
so any lower or upper triangular $\hat{\mu}$ can be lifted.

Next, suppose $\hat{\mu}$ interchanges $x$ and $y$.   Thus, 
\begin{align*}
	\det(Ix+Dy) \equiv \Pf(\Lambda_1,\Lambda_2) \equiv \Pf(\Phi_1,\Phi_2)^{\hat{\mu}} \equiv \det( xC + yI ).
\end{align*}
Arguing as before, there are matrices $X$ and $Y$ such that 
$X\{I,D\}Y=\{C,I\}$ and if $\mu=\diag(X,Y^{\tr})$, then $(\mu,\hat{\mu})$ is
a pseudo-isometry from $\{\Phi_1,\Phi_2\}$ to $\{\Lambda_1,\Lambda_2\}$.

In the general case, $\hat{\mu}$ is the product of 
$\hat{\beta}\hat{\gamma}\hat{\delta}$  where $\hat{\beta}$ and $\hat{\gamma}$ 
fix $x$ or $y$, and $\hat{\delta}$ transposes or fixes them
(an LUP-decomposition). Here, we lift $\hat{\mu}$ with three iterations using
the two special cases already treated.

%Individually lift  $\hat{\mu}$ by assigning intermediate pairs of forms, 
%i.e. carrying $\{I,C\}$ to $\{I,M\}$ or $\{N,C\}$
%as necessary.  Do likewise with $\hat{\beta}$ and $\hat{\gamma}$ 
%then compose the three lifts, so in the general case, $(\mu,\hat{\mu})$ is a pseudo-isometry.

Let $\alpha=(\mu,\tau)\in\GL(2,k)\rtimes\Gal(k)$. Since both $\circ$ and $\bullet$ are $k$-bilinear, it follows that 
for all $u,v\in V$, 
\[ 
u^\tau\circ v^\tau = (u\circ v)^\tau.
\]
Therefore, with $\alpha=(\mu,\tau)$, the pair $(\alpha,\hat{\alpha})$ is a pseudo-isometry from $\circ$ to $\bullet$, and so the theorem follows.
\end{proof}

%=============================================================================================

%============================================================================================

\section{The Adjoint-Tensor Method}
\label{sec:adjten}
Having developed the necessary foundation 
we turn now to our isomorphism tests. 
To emphasize that our
algorithms apply only to finite groups and fields we 
shall henceforth with $\mathbb{F}_q$ in place of $k$,
where $\Bbb{F}_q$ is an extension of $\Bbb{Z}_p$.
Via Proposition~\ref{prop:autoclinism}
questions of isomorphism between finite $p$-groups of class 2 are reduced to ones of pseudo-isometry
between $\mathbb{F}_q$-bimaps. 
Details of this reduction -- and the isomorphism tests
it leads to -- are given in Section~\ref{sec:iso-auto}. Our focus for
the time being is the following problem.
\bigskip

\begin{minipage}{0.9\textwidth}
\noindent {\sc PseudoIsometry}$\;(~\circ~,~\bullet~)$
\begin{description}
\item[Given] alternating $\mathbb{F}_q$-bimaps $\circ,\bullet\colon\mathbb{F}_q^d\times\mathbb{F}_q^d\bmto \mathbb{F}_q^e$
\item[Return]  a pseudo-isometry from $\circ$ to $\bullet$, if such exists.
\end{description}
\end{minipage}
\bigskip

We can, in principle, return {\em all} pseudo-isometries
from $\circ$ to $\bullet$ as a coset of the group $\pseudo(\circ)$. That group is often the focus
of attention, in fact, because it relates directly to the automorphism group of a $p$-group.
We shall concentrate here on testing for pseudo-isometry and explain how to adapt our methods
to finding generators for $\pseudo(\circ)$ in Section~\ref{sec:pseudo-group}.
\medskip

The question of testing alternating bimaps for pseudo-isometry is one that arises also
in the generic method for group isomorphism -- the {\em nilpotent quotient algorithm} -- 
though framed in cosmetically different terms. The basic approach is as follows.
As both bimaps are alternating, they factor through the alternating tensor 
bimap $\wedge\colon\mathbb{F}_q^d\times\mathbb{F}_q^d\bmto  \mathbb{F}_q^d\wedge \mathbb{F}_q^d$
with induced maps $\hat{\circ},\hat{\bullet}\colon \mathbb{F}_q^d\wedge \mathbb{F}_q^d\to\mathbb{F}_q^e$.
The general linear group, $\GL(d,\mathbb{F}_q)$, acts naturally on $\mathbb{F}_q^d\wedge\mathbb{F}_q^d$,
and $\circ$ and $\bullet$ are pseudo-isometric if, and only if, an element of $\GL(d,\mathbb{F}_q)$
maps $\ker\hat{\circ}$ to $\ker\hat{\bullet}$. 
Thus, to determine pseudo-isometry, we must solve a {\em subspace transporter problem},
which is notoriously difficult even for ``well understood" actions like the exterior square representation.
In practice, it is possible to proceed by a direct orbit calculation only
for quite modest values of $d$ and $q$.

On the other hand, if $\circ$ and $\bullet$ have a constrained structure -- 
such as the ones arising in~\cite{LW} -- specialized techniques may be developed to compute orbits
efficiently. Inspired by the need to bridge the gap between slow, generic methods,
and very fast, highly specialized ones, in~\cite{BW:autotopism} the first and third authors
proposed a new general technique called the {\em adjoint-tensor method}.
The method, which we outline in general below, 
is particularly well-suited
to the alternating bimaps of genus 2. 
Most of remaining content of the paper is concerned with the particular
application of adjoint-tensor to that case.
\medskip

\begin{figure}[!htbp]
\noindent {\sc PseudoIsometry}$\;(~\circ,\bullet\colon \mathbb{F}_q^d\times\mathbb{F}_q^d\bmto\mathbb{F}_q^e~)$\hfill
\smallskip

\begin{tabular}{ll}
1. & Compute $A=A(\circ)$ and $A(\bullet)$. \\

2. & Test if there exists $\rho\in\Gamma{\rm L}(d,\mathbb{F}_q)$ with $A(\bullet)^{\rho}=A$; if not, return {\tt false}. \\

 & Replace $\bullet$ with an isometric bimap, $\star$, so that $A=A(\circ)=A(\star)$. \\

3. & Compute kernels for the induced maps $\hat{\circ},\hat{\star} \colon \mathbb{F}_q^d\wedge\mathbb{F}_q^d\to\mathbb{F}_q^2$. \\

4. & Construct generators for $\pseudo(\wedge_A)$. \\

5. & Find $(\phi,\hat{\phi})\in\pseudo(\wedge_A)$ with $(\ker\hat{\circ})\hat{\phi}=\ker\hat{\star}$; \\
     & compute and return the pseudo-isometry $(\rho\phi,\hat{\phi})$ from $\circ\to\bullet$.
\end{tabular}
\caption{The adjoint-tensor approach to solving {\sc PseudoIsometry}.}\label{fig:code}
\end{figure}

In step 1 we are computing the adjoint rings.  
This is no worse than solving a system of $ed^2$ equations in $2d^2$ variables.  
In certain situations -- notably
sloped genus 2 bimaps -- one can extend the practical range by avoiding these
large linear systems~\cite{BW:slope}.

Step $2$ does not in general have a known polynomial-time solution.  
It asks whether subalgebras of $\mathbb{M}_d(\mathbb{F}_q)$ are conjugate
which, beyond just being isomorphic, requires that they are
identically represented on $\mathbb{F}_q^d$. This leads to
the notion of {\em module similarity}, a problem which was shown
in~\cite{BW:mod-iso} to be as hard as graph isomorphism.
If we are able to find such a $\rho$, however, we replace
$\bullet$ with the bimap $u\star v = u\rho\bullet v\rho$,
so that $A(\star)=A(\circ)=A$.  

To understand step $3$, recall from Section~\ref{sec:decomps-genus2} 
that the adjoint ring $A=A(\circ)$ 
is the largest ring, $B$, such that $\circ$ factors through the tensor product
$\mathbb{F}_q^d\otimes_{B}\mathbb{F}_q^d$. 
Since, for us, the bimap $\circ$ is alternating, it additionally factors
through the exterior product 
$\mathbb{F}_q^d\wedge_A\mathbb{F}_q^d$ (cf.~Theorem~\ref{thm:tensor-bimap}),
so there is an induced map $\hat{\circ}\colon \mathbb{F}_q^d\wedge_A\mathbb{F}_q^d\to\mathbb{F}_q^e$
such that
\begin{align}
\label{eq:induce}
(\forall u,v\in V) & & u\circ v=(u\wedge v)^{\hat{\circ}}.
\end{align}
Computing $\ker \hat{\circ}$ and $\ker \hat{\star}$ 
amounts to solving a system of $O(d^2)$ linear equations.

Step 4 builds the group that acts on $V\wedge_A V$.  As we noted earlier, $\GL(V)$ acts naturally
on the components of the traditional exterior square $V\wedge V$.  
To respect the tensor over $A$ we must instead use the group $\pseudo(\wedge_A)$,
the structure of which is described in \cite{BW:autotopism}*{Theorem~4.5}.
The description requires one to compute the normalizer of $A$,
and the complexity of this problem depends critically on structural properties
of $A$ and on its representation.

The final component (step 5) is the same as the conclusion of the nilpotent quotient algorithm described earlier. 
Once again $\circ$ and $\star$ are pseudo-isometric if,
and only if, $\ker\hat{\circ}$ and $\ker\hat{\star}$ are in the same orbit, this time 
under the action of $\pseudo(\wedge_A)$. Hence, we must solve the subspace
transporter problem for the representation of $\pseudo(\wedge_A)$
on $\mathbb{F}_q^d\wedge_A\mathbb{F}_q^d$.
\smallskip

In summary,
steps 2 (module similarity), 4 (normalizers of matrix rings), and 5 (subspace transporters) are each known 
to be at least as hard as graph isomorphism 
\citelist{\cite{BW:mod-iso}*{Theorem~1.2}\cite{BW:autotopism}*{Theorem~4.5(iii)}\cite{LM:normalizer}}. It seems, then, that
adjoint-tensor merely turns one difficult problem into three!  
The idea, though, is that each new ``hard problem" is either smaller in size, 
or has a controlled structure that admits a more efficient solution.  
{\em This is exactly the case for groups of genus $2$.}

%=============================================================================================

%============================================================================================

\section{Indecomposable Bimaps of Genus 2}
\label{sec:iso-ind-genus2}
We now restrict to bimaps of genus 2 and develop an effective algorithm
for {\sc PseudoIsometry} in this case. We start in this section
by further restricting to (orthogonally) indecomposable bimaps.
Our goal is the following result.

\begin{thm}
\label{thm:indecomp-bimap-alg}
There is a polynomial-time algorithm that, given 
indecomposable, alternating bimaps
$\circ,\bullet\colon \mathbb{F}_q^d\times\mathbb{F}_q^d\bmto
\mathbb{F}_q^2$
of genus 2, decides if the bimaps are pseudo-isometric and, if so, 
constructs 
a pseudo-isometry, namely $(\phi,\hat{\phi})\in \Gamma{\rm L}(d,\mathbb{F}_q)\times\Gamma{\rm L}(2,\mathbb{F}_q)$ 
such that $u\phi\bullet v\phi= (u\circ v)\hat{\phi}$ for all $u,v\in\mathbb{F}_q^d$.
The algorithm is deterministic if $p$ is bounded and Las Vegas otherwise.
\end{thm}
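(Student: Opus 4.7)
The plan is to follow the adjoint-tensor outline of Figure~\ref{fig:code}, exploiting indecomposability and the canonical forms from Proposition~\ref{prop:slope-flat} and Theorem~\ref{thm:indecomps} to reduce every hard step either to polynomial-time linear algebra or to a single generalized Pfaffian equivalence test supplied by Theorem~\ref{thm:det-method}.

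First I would compute $A(\circ)$ and $A(\bullet)$ by solving linear systems, and use Corollary~\ref{cor:types} to determine whether each bimap is sloped or flat. If the types disagree, return \texttt{false}. In the flat case, Theorem~\ref{thm:indecomps}(ii) says that, over $\F$, the indecomposable flat genus~2 bimap in each odd dimension is unique up to pseudo-isometry, so the test reduces to comparing dimensions; an explicit pseudo-isometry is read off by transforming each bimap to the canonical form extracted from $H^\flat$. In the sloped case, Theorem~\ref{thm:cent-hyp-alg}(ii) produces a hyperbolic pair, putting $\circ$ in the hyperbolic shape \eqref{eq:slope-pair} with corner block $\Psi$. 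By Lemma~\ref{lem:centralizer} the centroid $Z(A(\circ))$ is $\F[\sigma]$ with $\sigma=\Phi_2\Phi_1^{-1}$, and the minimal polynomial of $\sigma$ has the form $m(x)=a(x)^c$ with $a(x)$ irreducible. A further basis change puts $\Psi$ in rational canonical form $C(m(x))$; doing the same for $\bullet$ completes steps~2 and~3 of the adjoint-tensor scheme, since the two adjoint rings are now identically embedded in $\M_d(\F)$ and the kernels of the induced maps $\hat\circ, \hat\bullet$ on the rank-one cyclic module $\F^d\wedge_A\F^d\cong \F[x]/(m(x))$ (Theorem~\ref{thm:tensor-bimap}) are obtained by elementary linear computation.

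Rather than building $\pseudo(\wedge_A)$ in full for step~4, I would now invoke Theorem~\ref{thm:det-method} with $s=t=1$: the bimaps $\circ$ and $\bullet$ are pseudo-isometric iff some $\hat\alpha\in\GammaL(2,\F)$ satisfies
\[
\Pf\bigl(\Phi_1^\circ,\Phi_2^\circ\bigr)^{\hat\alpha}\,\equiv\,\Pf\bigl(\Phi_1^\bullet,\Phi_2^\bullet\bigr)\pmod{\F^\times}.
\]
After the above normalization both Pfaffians are the homogenizations of $m(x)$ and $m'(x)$, each a power of an irreducible polynomial in $\F[x]$. Deciding $\GammaL(2,\F)$-equivalence of these homogeneous forms, and producing the matching $\hat\alpha$, is done by comparing Galois orbits of the roots in an algebraic extension of $\F$ under the induced M\"obius action, using the LUP-decomposition argument at the end of the proof of Theorem~\ref{thm:det-method}. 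That same argument then lifts $\hat\alpha$ to the required $\phi\in\GammaL(d,\F)$ through the explicit Kronecker--Dieudonn\'e matching of indecomposable matrix pairs (Theorem~\ref{thm:Dieudonne}).

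The main obstacle is the polynomial factorization needed to perform the Pfaffian equivalence test; this is the sole source of the Las Vegas qualification for unbounded $p$ and of the deterministic bound for bounded $p$ (cf.\ Proposition~\ref{prop:basic-algs} and the remark at the end of Section~\ref{sec:bimaps}). Every other ingredient -- computing adjoints, hyperbolic pairs, rational canonical forms, and the explicit basis changes inside the proof of Theorem~\ref{thm:det-method} -- is standard polynomial-time linear algebra.
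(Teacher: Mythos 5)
Your treatment of the flat case coincides with the paper's (Section~\ref{subsec:flat-indec}); the real divergence is in the sloped case, where you take a genuinely different and, once one step is made precise, workable route. The paper runs the full adjoint-tensor scheme: it conjugates $A(\bullet)$ onto $A(\circ)$ via the cyclic-algebra conjugacy algorithm (Theorem~\ref{thm:conjugacy}), builds $\pseudo(\wedge_A)$ through the exact sequence (\ref{eq:pseudo-tensor}) -- including the Nottingham-type subgroup $\Sigma$ of $\Aut(\mathbb{F}_q[x]/(m(x)))$ -- and then solves the subspace-transporter problem for $\ker\hat{\circ}$ and $\ker\hat{\star}$ using Lemmas~\ref{lem:Ronyai} and~\ref{lem:unipotent}. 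You instead specialize Theorem~\ref{thm:det-method} to $s=t=1$, reducing everything to whether the two Pfaffians, which up to a $\GL(2,\F)$-translate are the homogenizations of $a(x)^c$ and $b(x)^{c'}$, lie in one $\GammaL(2,\F)$-orbit modulo scalars. Be aware that the paper deliberately avoids this route for large $q$: its realization of the Pfaffian test (Section~\ref{subsec:small}) searches exhaustively over $\GL(2,\F)$ at cost $\Theta(q^3)$, which is exponential in the input length. Your proposal survives only because of the extra observation -- which you assert but do not justify -- that for $c$-th powers of a single irreducible of degree $e$ the search collapses to matching the two Galois orbits of roots: fix the canonical root $\gamma$ of $b$ in $\mathbb{F}_{q^e}=\F[t]/(b(t))$, find the roots of $a^{\tau}$ there for each $\tau\in\Gal(\F)$ (Las Vegas root-finding; deterministic for bounded $p$), and for each candidate root $\beta$ solve the $\F$-linear condition $r\gamma+s=\beta(u\gamma+v)$ on the entries $r,s,u,v$ of a candidate $\hat{\mu}$ and test the resulting subspace of $\M_2(\F)$ for an invertible element. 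That subroutine is polynomial time and is the crux of your argument; note that the LUP-decomposition passage you cite from the proof of Theorem~\ref{thm:det-method} only \emph{lifts} a given $\hat{\alpha}$, it does not \emph{find} one, so as written your plan leaves its central step unproved. What the paper's heavier machinery buys is reuse: the same conjugacy, normalizer, and transporter/stabilizer routines extend verbatim to decomposable bimaps (Section~\ref{sec:iso-genus2}) and, run as stabilizer computations, return generators for all of $\pseudo(\circ)$ (Section~\ref{sec:pseudo-group}), which later results require; your route produces a single pseudo-isometry more directly and elementarily, but would need to be supplemented to deliver the group.
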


The qualification of Las Vegas versus deterministic in Theorem~\ref{thm:indecomp-bimap-alg}
(and in later theorems) arises only from the
need to factor polynomials.

%%%%%
\subsection{Standard indecomposable pairs of matrices}
\label{subsec:standard}
To prove Theorem~\ref{thm:indecomp-bimap-alg} we apply the ``flat-sloped" dichotomy of
Theorem~\ref{thm:indecomps} to the associated pairs $\{\Phi_1,\Phi_2\}$ of alternating forms. 
It will be helpful to select a basis relative to which
\begin{align}
\label{eq:pair}
\Phi_1=\begin{bmatrix} 0 & \Psi_1 \\ -\Psi_1^{{\rm tr}} & 0 \end{bmatrix}~~\mbox{and}~~
\Phi_2 = \begin{bmatrix} 0 & \Psi_2 \\ -\Psi_2^{{\rm tr}} & 0 \end{bmatrix}, 
\end{align}
and $\{\Psi_1,\Psi_2\}$ is given by the appropriate part of Theorem~\ref{thm:Dieudonne}.
We begin by finding a totally isotropic decomposition for the pair (see Lemma~\ref{lem:isotropics}). 
This is done in polynomial time by finding a hyperbolic pair of idempotents in the adjoint ring of the pair,
as we did in the proof of Theorem~\ref{thm:cent-hyp-alg}(ii).
By changing to a basis that respects this totally isotropic decomposition, we obtain a pair of
forms as in (\ref{eq:pair}) with $\{\Psi_1,\Psi_2\}$ an arbitrary indecomposable pair of matrices.
It remains to find matrices $X,Y$ such that $\{X\Psi_1Y,X\Psi_2Y\}$ has the desired form.

The sloped case, namely Theorem~\ref{thm:Dieudonne}(i), has been discussed from
the point of view of algorithms in several recent papers; see~\citelist{\cite{GG}\cite{BW:slope}}
for example. The conversion depends only on the sloped aspect, and so works for decomposable pairs.

Suppose, then, that $\Psi_1,\Psi_2\in\mathbb{M}_{n,n+1}(\mathbb{F}_q)$ is an indecomposable pair, where $d=2n+1$.
Compute $X\in \mathbb{M}_{n}(\mathbb{F}_q), Y\in \mathbb{M}_{n+1}(\mathbb{F}_q)$ such that
$X\Psi_1Y=[I_n|0]$, the standard matrix for $\Psi_1$, using Gaussian elimination.  We now
modify $X$ and $Y$ so that $X\Psi_1Y=[I_n|0]$ and $X\Psi_2Y=[0|I_n]$.
We do this by successive approximations.

First, write $X\Psi_2Y=[U|u^{{\rm tr}}]$, and find $B\in\mathbb{GL}_n(\mathbb{F}_q)$ such that $BUB^{-1}=R$ is
in generalized Jordan normal form. 
Reassign $X:=BX$ and $Y:=Y\left[\begin{smallmatrix} B^{-1} &  0 \\ 0 & 1 \end{smallmatrix}\right]$.
As the pair is indecomposable, $R$ is a single companion matrix, 
say $R=\left[ \begin{smallmatrix} 0 & I_{n-1} \\ \alpha & v' \end{smallmatrix} \right]$.
Secondly, write $X\Psi_2Y=[R|v^{{\rm tr}}]$ and 
find $T$ in the cyclic algebra generated by $R$ sending $v^{{\rm tr}}$ to 
$(0\ldots 0 1)^{{\rm tr}}$. Reassign $X:=TX$.
Finally, write $X\Psi_2Y=\left[ \begin{smallmatrix} 0 & I_{n-1} & 0 \\ \beta & b' & 1 \end{smallmatrix} \right]$,
put $b:=(\beta~b')\in\mathbb{F}_q^n$, and reassign $Y:=Y\left[ \begin{smallmatrix} I_n & 0 \\ -b & 1 \end{smallmatrix} \right]$.

%%%%%
\subsection{The flat case}
\label{subsec:flat-indec}
It is immediate from our discussion of this case in the preceding section that
 two flat, indecomposable bimaps of genus 2 are isometric, and 
Theorem~\ref{thm:indecomp-bimap-alg} holds in this case.
Recall, however, we shall eventually require generators for $\pseudo(\circ)$.
We address this problem for general bimaps later in Section~\ref{sec:pseudo-group}, but we
can resolve the matter now for flat,
indecomposable bimaps of genus 2.

\begin{prop}
\label{prop:pseudo-flat}
If $\circ\colon \colon \mathbb{F}_q^d\times\mathbb{F}_q^d\bmto
\mathbb{F}_q^2$ is a flat, indecomposable bimap of genus 2,
then there is an epimorphism $\pseudo(\circ)\to\Gamma{\rm L}(2,\mathbb{F}_q)$
with kernel $\isom(\circ)$.
\end{prop}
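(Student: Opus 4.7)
The natural projection $\pi\colon\pseudo(\circ)\to\GammaL(2,\mathbb{F}_q)$ defined by $(\phi,\hat\phi)\mapsto\hat\phi$ is a homomorphism whose kernel, directly from the definitions (\ref{eq:pseudo-isometry-group}) and (\ref{eq:isometry-group}), is precisely $\isom(\circ)$. The entire content of the proposition is therefore the surjectivity of $\pi$, and my plan is to construct, for each $\hat\alpha\in\GammaL(2,\mathbb{F}_q)$, an explicit pseudo-isometry $(\phi,\hat\alpha)$.

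I begin by fixing a basis of $V=\mathbb{F}_q^d$ relative to which $\circ$ is given by the standard flat pair $\{\Phi_1,\Phi_2\}$ of Proposition~\ref{prop:slope-flat}, with $\Psi_1=[I_n\,|\,0]$, $\Psi_2=[0\,|\,I_n]$, and $d=2n+1$; the reduction in Section~\ref{subsec:standard} shows this can be arranged algorithmically. For $\mu=\left[\begin{smallmatrix}a&b\\c&d\end{smallmatrix}\right]\in\GL(2,\mathbb{F}_q)$, write $\Psi'_1=a\Psi_1+c\Psi_2$ and $\Psi'_2=b\Psi_1+d\Psi_2$. The key observation is that $\{\Psi'_1,\Psi'_2\}$ spans the same $2$-dimensional subspace of $\M_{n,n+1}(\mathbb{F}_q)$ as $\{\Psi_1,\Psi_2\}$, and decomposability under the left-right action $\{\Psi_1,\Psi_2\}\mapsto\{X\Psi_1Y,X\Psi_2Y\}$ depends only on this span: if some basis of the span is simultaneously block-diagonal in a pair of bases, so is any other basis of the span, as a linear combination. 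Hence $\{\Psi'_1,\Psi'_2\}$ is still indecomposable and, having shape $n\times(n+1)$, falls in case (ii) of Theorem~\ref{thm:Dieudonne}. That theorem then produces $X\in\GL(n,\mathbb{F}_q)$ and $Y\in\GL(n+1,\mathbb{F}_q)$ with $X\Psi_iY=\Psi'_i$; setting $\phi_\mu=\diag(X,Y^{{\rm tr}})$, a direct computation using the block form of $\Phi_i$ gives $\phi_\mu\Phi_1\phi_\mu^{{\rm tr}}=a\Phi_1+c\Phi_2$ and $\phi_\mu\Phi_2\phi_\mu^{{\rm tr}}=b\Phi_1+d\Phi_2$, which is exactly the condition $(\phi_\mu,\mu)\in\pseudo(\circ)$.

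It remains to lift the Galois part, and here I would exploit the fact that all entries of $\Phi_1$ and $\Phi_2$ lie in the prime subfield $\mathbb{F}_p$. Applying any $\tau\in\Gal(\mathbb{F}_q/\mathbb{F}_p)$ coordinatewise on both $V$ and $W$ therefore gives $u^\tau\circ v^\tau=(u\circ v)^\tau$, so $(\phi_\tau,\tau)\in\pseudo(\circ)$ with $\phi_\tau$ the Frobenius on $V$. Composing $(\phi_\mu,\mu)$ with $(\phi_\tau,\tau)$ yields a preimage under $\pi$ of an arbitrary $\hat\alpha=\mu\tau\in\GammaL(2,\mathbb{F}_q)$, completing the surjectivity argument. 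The only real subtlety in the plan is the invariance of indecomposability under the $\GL(2)$-action used to change basis of the $2$-space of matrices, but once phrased in terms of the span this is immediate.
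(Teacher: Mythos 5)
Your proof is correct, but it takes a genuinely different route from the paper's. You argue element-by-element: for each $\mu\in\GL(2,\mathbb{F}_q)$ the transformed pair $\{a\Psi_1+c\Psi_2,\,b\Psi_1+d\Psi_2\}$ spans the same $2$-space of $n\times(n+1)$ matrices, hence remains indecomposable (your observation that decomposability is a property of the span is the right justification), and Theorem~\ref{thm:Dieudonne}(ii) then supplies a transporter $(X,Y)$, which you assemble into $\phi_\mu=\diag(X,Y^{\mathrm{tr}})$; the Galois part is handled by coordinatewise Frobenius since the standard forms have prime-field entries. This is sound, and in spirit it mirrors the converse direction of the paper's Theorem~\ref{thm:det-method}. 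The paper instead identifies the flat bimap with the matrix-multiplication bimap $E\times\M_{(n+1)\times 1}\bmto\M_{2\times 1}$, identifies $E$ with the binary forms of degree $n$ in $\mathbb{F}_q[x,y]$, and lifts $g\in\GammaL(2,\mathbb{F}_q)$ explicitly as $(\lambda_g,g\rho;g)$ with $\rho$ the (symmetric-power) action on those forms. What the paper's construction buys is a uniform, closed-form lift — a map $g\mapsto(\lambda_g,g\rho;g)$ that directly yields generators of $\pseudo(\circ)$ for the algorithm in Section~\ref{sec:pseudo-group} — whereas your argument is existential for each $\mu$ and only becomes constructive by rerunning the standardization procedure of Section~\ref{subsec:standard} on each generator of $\GammaL(2,\mathbb{F}_q)$. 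Both establish surjectivity of $\pi$, and the identification of $\ker\pi$ with $\isom(\circ)$ is immediate from the definitions in either treatment.
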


\begin{proof}
For $e=(e_1,\ldots,e_n)\in\Bbb{F}_q^n$, define $M=M(e)=\left[\begin{smallmatrix}
	e_1 &  \cdots & e_n & 0 \\
	0 & e_1 & \cdots & e_n 
	\end{smallmatrix}\right]$
and set 
\begin{align*}
E &= \{M(e)\colon  e\in \mathbb{F}_q^n\}\leq \M_{2\times (n+1)}(\mathbb{F}_q).
\end{align*}
Then, the usual matrix multiplication 
\begin{align*}
\times \colon & E\times \mathbb{M}_{(n+1)\times 1}(\mathbb{F}_q)\bmto \mathbb{M}_{2\times 1}(\mathbb{F}_q)
\end{align*} 
is described by the system of forms
$\Psi=\{[I_n |0 ],[0|I_n]\}$.  As $\circ$ is given a pair of forms $\Phi_i = 
\left[\begin{smallmatrix} 0 & \Psi_i \\ -\Psi_i^{tr} & 0 \end{smallmatrix}\right]$,
it follows that every isotopism $(\alpha,\beta;\gamma)$ of $\times$ 
induces a pseudo-isometry $(\alpha\oplus \beta,\gamma)$ of $\circ$,
so it suffices to show that $\Gamma{\rm L}(2,\mathbb{F}_q)$ lifts to autotopisms of $\times$ 
acting faithfully on $\M_{2\times 1}(\Bbb{F}_q)$.

For $e=(e_1,\ldots,e_n)\in\Bbb{F}_q^n$, 
define $f_{e}(x,y)=e_1 x^{n}+\cdots + e_i x^{n-i}y^i +\cdots + e_n y^n\in \Bbb{F}_q[x,y]$.  
Then $M_e\mapsto f_e(x,y)$ is a linear bijection from 
$E=\{M(e) \colon e\in \Bbb{F}_q^n\}$ to the set of homogeneous polynomials
in $\Bbb{F}_q[x,y]$ of degree $n$.  Let
$\rho\colon \Gamma{\rm L}(2,\mathbb{F}_q)\to \Gamma{\rm L}(n+1,\Bbb{F}_q)$
denote the faithful representation arising from the natural action of $\Gamma{\rm L}(2,\Bbb{F}_q)$
on the latter.  
For $g\in \Gamma{\rm L}(2,\mathbb{F}_q)$, define
\begin{align*}
	M(e)\lambda_g & := g M(e) (g^{-1}\rho).
\end{align*}
Then $(\lambda_g,g\rho;g)$ is an isotopism of $\bullet$, and the result follows.
\end{proof}

%%%%%
\subsection{The sloped case}
\label{subsec:sloped-indec}
Recall that an alternating bimap $\circ\colon \mathbb{F}_q^d\times\mathbb{F}_q^d\bmto \mathbb{F}_q^2$
of genus 2 is sloped if we can represent it by a pair $\{\Phi_1,\Phi_2\}$ with $\Phi_1$
nondegenerate. Our goal is to complete the proof of Theorem~\ref{thm:indecomp-bimap-alg}
by presenting a test for pseudo-isometry between two sloped, indecomposable
bimaps $\circ,\bullet\colon \mathbb{F}_q^d\times\mathbb{F}_q^d\bmto
\mathbb{F}_q^2$.
\medskip

We will use the adjoint-tensor method of Section~\ref{sec:adjten}, referring to the pseudo-code 
in Figure~\ref{fig:code}.  Recall that we must resolve three problems:
\begin{enumerate}
\item[\underline{Line 2}] Given adjoint algebras $A(\circ)$ and $A(\bullet)$, find $\rho\in\Gamma{\rm L}(d,\mathbb{F}_q)$
with $A(\bullet)^{\rho}=A(\circ)$ (if such exists).
\item[\underline{Line 4}] Given $A=A(\circ)$, build generators for $\pseudo(\wedge_A)$.
\item[\underline{Line 5}] Solve the ``transporter problem": 
given subspaces $U,V$ of $\mathbb{F}_q^d\otimes_A\mathbb{F}_q^d$
find $(\phi,\hat{\phi})\in \pseudo(\wedge_A)$
sending $U$ to $V$, or prove that no such $(\phi,\hat{\phi})$ exists.
\end{enumerate}
We consider each problem in turn.

\subsubsection{Conjugating the adjoint algebras}
\label{subsec:conj}
As we noted in Section~\ref{sec:adjten}, conjugacy of algebras is very hard in general,
but an efficient solution exists in our setting.
This relies on the special nature of adjoint algebras for sloped bimaps 
of genus $2$.  
Any such a bimap $\circ$ is represented by a pair $\{\Phi_1,\Phi_2\}$ of alternating
forms with $\Phi_1$ invertible, and slope $\sigma=\Phi_2\Phi_1^{-1}$ 
is invariant under basis change in $\mathbb{F}_q^d$  -- that is, invariant in $\isom(\circ)$.  By 
Lemma~\ref{lem:centralizer}, $A(\circ) = C_{\mathbb{M}_d(\mathbb{F}_q)}(\sigma)$, so
\[
Z(A(\circ))=\mathbb{F}_q[\sigma]\cong \mathbb{F}_q[x]/(m(x)),
\]
where $m(x)$ is the minimum polynomial of 
$\sigma$.  The conjugacy problem for cyclic algebras has an efficient solution.

\begin{thm}[\cite{BW:mod-iso}*{Theorem~1.3}]
\label{thm:conj-cyclic}
There is a polynomial-time algorithm that, given cyclic algebras $A=\mathbb{F}_q[\alpha]$, $B=\mathbb{F}_q[\beta]$,
for $\alpha,\beta\in\mathbb{M}_d(\mathbb{F}_q)$,
finds $\rho\in\Gamma{\rm L}(d,\mathbb{F}_q)$ with $A^{\rho}=B$, or decides that no such $\rho$ exists.
\end{thm}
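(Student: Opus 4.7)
The plan is to reduce the algebra conjugacy problem to standard computations with invariant factors (rational canonical form), handling the semilinear part by a short Galois enumeration.

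First I would separate the Galois twist. Any $\rho \in \Gamma{\rm L}(d,\mathbb{F}_q)$ factors as $\rho = \sigma\tau$ with $\sigma \in \GL(d,\mathbb{F}_q)$ and $\tau \in \Gal(\mathbb{F}_q/\mathbb{F}_p)$, and since $|\Gal(\mathbb{F}_q/\mathbb{F}_p)| = \log_p q$ is polynomial in the input size, I enumerate over $\tau$. For each fixed $\tau$, after replacing $A$ by $\tau(A) = \mathbb{F}_q[\tau(\alpha)]$ the remaining task is to find $\sigma \in \GL(d,\mathbb{F}_q)$ with $\sigma^{-1}\tau(A)\sigma = B$. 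Observe that $\sigma^{-1}A\sigma = B$ iff $\sigma^{-1}\alpha\sigma$ lies in $B$ and generates it as an $\mathbb{F}_q$-algebra, so the problem reduces to deciding whether some generator $\beta'$ of $B$ is $\GL_d$-conjugate to $\alpha$ and, if so, explicitly exhibiting the conjugator.

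Next I would use the classical fact that two matrices are $\GL_d$-conjugate iff they share the same invariant factors. Polynomial factorization (Las Vegas when $p$ is unbounded, deterministic otherwise) produces primary decompositions $A = \bigoplus_i A_i$ with each $A_i \cong \mathbb{F}_q[x]/(p_i^{e_i})$ a local principal ideal ring, and $B = \bigoplus_j B_j$ similarly, together with the induced primary decompositions $\mathbb{F}_q^d = \bigoplus_i M_i$ as an $A$-module and $\mathbb{F}_q^d = \bigoplus_j N_j$ as a $B$-module. A necessary first condition for conjugacy is a bijection $\pi$ between the index sets that matches local factors as abstract $\mathbb{F}_q$-algebras, i.e., equates the multi-sets of pairs $(\deg p_i, e_i)$. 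Each $M_i$ is a finitely generated module over the principal ideal ring $A_i$ and is classified by its invariant factors there; the desired $\sigma$ exists iff $\pi$ can be chosen so that, under the identification of $A_i$ with $B_{\pi(i)}$ as abstract algebras, the invariant factors of $M_i$ over $A_i$ agree with those of $N_{\pi(i)}$ over $B_{\pi(i)}$.

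Once a compatible matching is in hand, the conjugator is assembled block by block: rational-canonical-form routines produce bases of each $M_i$ and $N_{\pi(i)}$ compatible with the invariant-factor decompositions, the block linear maps sending one basis to the other combine into $\sigma \in \GL(d,\mathbb{F}_q)$ with $\sigma^{-1}A\sigma = B$, and $\rho = \sigma\tau$ is returned. All steps -- polynomial factorization, primary decomposition, invariant-factor computation, and basis construction -- run in polynomial time. The main obstacle is the matching step: abstract $\mathbb{F}_q$-algebra isomorphism of primary factors is strictly weaker than matrix conjugacy, and the correct lifting condition must be formulated in terms of the module-theoretic invariants on each primary component. Once that condition is in place, everything else is standard linear algebra.
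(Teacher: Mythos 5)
The paper does not actually prove this statement; it is imported verbatim as \cite{BW:mod-iso}*{Theorem~1.3}, and your argument is a correct reconstruction along the same lines as the cited proof: quotient out the (polynomially many) Galois twists, then reduce $\GL(d,\mathbb{F}_q)$-conjugacy of the two cyclic algebras to matching their primary decompositions together with the elementary divisors of $\mathbb{F}_q^d$ over each local factor. The one point you flag but do not close --- that comparing invariant factors of $M_i$ and $N_{\pi(i)}$ ``under the identification of $A_i$ with $B_{\pi(i)}$'' must not depend on which identification is chosen --- does hold: every $\mathbb{F}_q$-algebra isomorphism between local factors carries maximal ideal to maximal ideal, so the dimensions of the layers $\mathfrak{m}^k N/\mathfrak{m}^{k+1}N$, equivalently the multiset of elementary-divisor exponents, are intrinsic and your matching condition is well defined. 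Granting that, sufficiency is cleanest via $\beta'=\phi(\alpha)$ for the assembled isomorphism $\phi=\oplus_i\phi_i$: the matching forces $\alpha$ and $\beta'$ to have the same rational canonical form, a standard RCF routine yields the conjugator $\sigma$, and $\sigma^{-1}A\sigma=\mathbb{F}_q[\beta']=B$, which is exactly what your block-by-block assembly amounts to. No genuine gap.
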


This leads to a resolution of our first problem. 

\begin{thm}
\label{thm:conjugacy}
There is a polynomial-time algorithm that, given sloped bimaps 
$\circ,\bullet\colon \mathbb{F}_q^d\times\mathbb{F}_q^d\bmto \mathbb{F}_q^2$ of genus $2$, 
finds $\rho\in\Gamma{\rm L}(d,\mathbb{F}_q)$ such that $A(\circ)^{\rho}=A(\bullet)$,
or decides that no such $\rho$ exists.  
\end{thm}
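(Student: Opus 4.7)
The plan is to reduce the conjugacy of the adjoint algebras to the conjugacy of their centers, which are cyclic, and then invoke Theorem~\ref{thm:conj-cyclic}. The reduction rests on the double-centralizer theorem applied to $A(\circ)$ and $A(\bullet)$ inside $\mathbb{M}_d(\mathbb{F}_q)$.

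First I would set up the slopes. Since both bimaps are sloped, I can select systems of forms $\{\Phi_1,\Phi_2\}$ and $\{\Lambda_1,\Lambda_2\}$ representing $\circ$ and $\bullet$ with $\Phi_1$ and $\Lambda_1$ invertible; if the input representation does not have this property, any change of basis in $W=\mathbb{F}_q^2$ that exhibits a nondegenerate form in the pencil will do, and such a choice is found by scanning a constant number of $\mathbb{F}_q$-lines for an invertible representative (or by a single rank check after Las Vegas factoring of $\det(x\Phi_1+y\Phi_2)$ when $q$ is large). Set $\sigma_\circ=\Phi_2\Phi_1^{-1}$ and $\sigma_\bullet=\Lambda_2\Lambda_1^{-1}$. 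By Lemma~\ref{lem:centralizer},
\[
A(\circ)=C_{\mathbb{M}_d(\mathbb{F}_q)}(\sigma_\circ),\qquad Z(A(\circ))=\mathbb{F}_q[\sigma_\circ],
\]
and analogously for $\bullet$. A direct computation (as in Remark~\ref{rem:general-slope}) shows that changing basis in $W$ replaces $\sigma_\circ$ by $(c+d\sigma_\circ)(a+b\sigma_\circ)^{-1}$ for some $\left[\begin{smallmatrix}a&b\\c&d\end{smallmatrix}\right]\in\GL(2,\mathbb{F}_q)$, which leaves the cyclic algebra $\mathbb{F}_q[\sigma_\circ]$ unchanged; changing basis in $V$ merely conjugates $\sigma_\circ$ in $\mathbb{M}_d(\mathbb{F}_q)$. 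Hence the subalgebra $\mathbb{F}_q[\sigma_\circ]\leq \mathbb{M}_d(\mathbb{F}_q)$ is an invariant of the conjugacy class of $A(\circ)$.

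Next I would argue that finding $\rho$ with $A(\circ)^\rho=A(\bullet)$ is equivalent to finding $\rho$ with $\mathbb{F}_q[\sigma_\circ]^\rho=\mathbb{F}_q[\sigma_\bullet]$. One direction is immediate: conjugation is a ring automorphism of $\mathbb{M}_d(\mathbb{F}_q)$, so it sends centers to centers, and $Z(A(\circ))^\rho=Z(A(\circ)^\rho)=Z(A(\bullet))$. For the converse, apply the double centralizer theorem inside the central simple algebra $\mathbb{M}_d(\mathbb{F}_q)$: the centralizer of $\mathbb{F}_q[\sigma_\circ]$ is precisely $A(\circ)$, so $\mathbb{F}_q[\sigma_\circ]^\rho=\mathbb{F}_q[\sigma_\bullet]$ implies $A(\circ)^\rho=C_{\mathbb{M}_d(\mathbb{F}_q)}(\mathbb{F}_q[\sigma_\circ]^\rho)=C_{\mathbb{M}_d(\mathbb{F}_q)}(\mathbb{F}_q[\sigma_\bullet])=A(\bullet)$.

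Having reduced to the cyclic case, I would invoke Theorem~\ref{thm:conj-cyclic} on the input $(\mathbb{F}_q[\sigma_\circ],\mathbb{F}_q[\sigma_\bullet])$ to obtain, in polynomial time, either an element $\rho\in\Gamma\mathrm{L}(d,\mathbb{F}_q)$ conjugating the two or a certificate of non-conjugacy; by the equivalence above, this same $\rho$ is the required conjugator of adjoint algebras. All auxiliary steps -- producing the systems of forms, exhibiting an invertible member of the pencil, inverting a $d\times d$ matrix, and computing the minimum polynomial of $\sigma$ -- are standard polynomial-time linear-algebra tasks, so the overall complexity is dominated by Theorem~\ref{thm:conj-cyclic}. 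The only step that demands care is the double centralizer argument, but since $\mathbb{M}_d(\mathbb{F}_q)$ is central simple and $\mathbb{F}_q[\sigma_\circ]$ is the full center of $A(\circ)$, the theorem applies without qualification; the main obstacle would otherwise be verifying that $C_{\mathbb{M}_d(\mathbb{F}_q)}(\mathbb{F}_q[\sigma_\circ])=A(\circ)$, which is exactly what Lemma~\ref{lem:centralizer} provides.
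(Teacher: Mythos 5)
Your proposal is correct and follows the paper's proof exactly: both reduce conjugacy of the adjoint algebras to conjugacy of their cyclic centers via Lemma~\ref{lem:centralizer} and then invoke Theorem~\ref{thm:conj-cyclic}; you merely make explicit the step the paper leaves implicit, namely that $A(\circ)=C_{\mathbb{M}_d(\mathbb{F}_q)}(\mathbb{F}_q[\sigma_\circ])$ so that conjugating centers conjugates the algebras. (One small remark: you do not need the double centralizer theorem here, only the single identity $C_{\mathbb{M}_d(\mathbb{F}_q)}(\mathbb{F}_q[\sigma_\circ])=C_{\mathbb{M}_d(\mathbb{F}_q)}(\sigma_\circ)=A(\circ)$, which is what you in fact use.)
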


\begin{proof}
By Lemma~\ref{lem:centralizer}, $A(\circ)$ and $A(\bullet)$ are centralizers of 
slopes $\sigma_{\circ}$ and $\sigma_{\bullet}$, respectively.  
Furthermore, $A(\circ)$ and $A(\bullet)$ are conjugate if, and only, if their
centers are conjugate. The result now follows from Theorem~\ref{thm:conj-cyclic}.
\end{proof}

\subsubsection{The properties of $\wedge_A$}
\label{subsec:tensor}
To describe $\pseudo(\wedge_A)$ we need the following result.

\begin{thm}[\cite{BW:autotopism}*{Theorem~1.5}]
If $\circ\colon V\times V\bmto W$ is an alternating bimap with adjoint ring $A=A(\circ)$, then
$\pseudo(\wedge_A)$ is faithfully represented on $V$ as  
\begin{align*}
 N^*(A) &= \{g\in\GL(d,\F)\colon A^g=A~\mbox{and}~(x^g)^*=(x^*)^g~\mbox{for all}~x\in A\}.
\end{align*}
\end{thm}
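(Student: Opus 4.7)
The plan is to exhibit the isomorphism $\pseudo(\wedge_A)\to N^*(A)$ via the assignment $(\phi,\hat\phi)\mapsto\phi$, and then to verify both the injectivity and the description of the image from the universal property of $V\wedge_A V$.

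First, I would present $V\wedge_A V$ as the quotient of the ordinary $k$-exterior square $V\wedge_k V$ by the $k$-subspace $I_A$ spanned by all elements $uL_\mu\wedge v - u\wedge vR_\mu$ for $\mu\in A$ and $u,v\in V$, where $L_\mu$ and $R_\mu$ denote the left- and right-actions of $\mu$ on $V$. With the projection $\pi\colon V\wedge_k V\to V\wedge_A V$, a given $\phi\in\GL(V)$ yields an element $(\phi,\hat\phi)\in\pseudo(\wedge_A)$ for some $\hat\phi$ if and only if $\phi\wedge\phi$ descends along $\pi$, that is, preserves $I_A$; when it does, the map $\hat\phi$ is forced on the generating set of pure wedges.

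Second, I would track how $\phi\wedge\phi$ acts on a typical generator of $I_A$. Using $uL_\mu\phi=(u\phi)(\phi^{-1}L_\mu\phi)$ and the analogous identity for $R_\mu$, one computes
\begin{align*}
(\phi\wedge\phi)\bigl(uL_\mu\wedge v - u\wedge vR_\mu\bigr) = (u\phi)(\phi^{-1}L_\mu\phi)\wedge v\phi \;-\; u\phi\wedge (v\phi)(\phi^{-1}R_\mu\phi).
\end{align*}
For this to lie in $I_A$ for every $\mu\in A$ and every $u,v\in V$, there must exist $\nu=\mu^\phi\in A$ with $\phi^{-1}L_\mu\phi=L_\nu$ and $\phi^{-1}R_\mu\phi=R_\nu$. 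Quantified over all $\mu$, the first condition is exactly $A^\phi=A$ inside $\End(V)$; the second, combined with the $*$-compatibility $R_\mu=L_{\mu^*}$ built into $A$, translates to $(\mu^\phi)^*=(\mu^*)^\phi$. Hence $\phi\wedge\phi$ preserves $I_A$ precisely when $\phi\in N^*(A)$.

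Third, for faithfulness I would note that if $(\phi,\hat\phi)\in\pseudo(\wedge_A)$ with $\phi=1_V$, then $u\wedge_A v=(u\wedge_A v)\hat\phi$ for all $u,v$, and since pure wedges span $V\wedge_A V$ this forces $\hat\phi=1$. The anticipated obstacle is keeping the left/right $A$-action bookkeeping clean; the cleanest fix is to view $A$ inside $\End(V)$ once via $\mu\mapsto L_\mu$, translate the right-action condition to a left-action condition using the involution, and observe that the compatibility $(x^*)^\phi=(x^\phi)^*$ is exactly what guarantees that the element $\nu$ determined by the left-action agrees with the one determined by the right-action.
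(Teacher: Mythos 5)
First, a point of orientation: this paper never proves the statement — it is imported verbatim from \cite{BW:autotopism}*{Theorem~1.5} and used as a black box in the discussion of $\pseudo(\wedge_A)$. So there is no in-text proof to compare yours against; I can only assess your argument on its own terms.

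Your strategy — realize $V\wedge_A V$ as $(V\wedge_k V)/I_A$, observe that $\phi$ lifts to a pseudo-isometry of $\wedge_A$ exactly when $\phi\wedge\phi$ preserves $I_A$, and get faithfulness because pure wedges span — is sound, and the ``if'' direction (every $\phi\in N^*(A)$ preserves $I_A$) is complete. The gap is in the ``only if'' direction. You assert that if $(u\phi)(\phi^{-1}L_\mu\phi)\wedge v\phi - u\phi\wedge(v\phi)(\phi^{-1}R_\mu\phi)$ lies in $I_A$ for all $u,v$, then there \emph{must} exist $\nu\in A$ with $\phi^{-1}L_\mu\phi=L_\nu$ and $\phi^{-1}R_\mu\phi=R_\nu$. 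Writing $X=\phi^{-1}L_\mu\phi$ and $Y=\phi^{-1}R_\mu\phi$, what the hypothesis actually gives is that the pair $(X,Y)$ is an adjoint of the bimap $\wedge_A$, i.e.\ $(X,Y)\in A(\wedge_A)$; to conclude $(X,Y)=(L_\nu,R_\nu)$ for some $\nu\in A$ you need the closure identity $A(\wedge_A)=A$. This is precisely where the hypothesis that $A$ is the adjoint ring of an alternating bimap, rather than an arbitrary $*$-subring $B$ of $\End(V)$, is used — for a non-closed $B$ the image of the representation is $N^*$ of the larger ring $A(\wedge_B)$, which need not equal $N^*(B)$. The fix is short but should be said: $A\subseteq A(\wedge_A)$ by construction of the tensor product, and since $\circ$ is alternating it factors through $\wedge_A$, so every adjoint of $\wedge_A$ is an adjoint of $\circ$, giving $A(\wedge_A)\subseteq A(\circ)=A$. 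You should also justify the identity $R_\mu=L_{\mu^*}$ that your bookkeeping relies on — it requires $\circ$ to be nondegenerate so that $A$ is faithfully represented by both its left and right actions on $V$ — since your translation of the condition on $Y$ into $(\mu^\phi)^*=(\mu^*)^\phi$ depends on it.
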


Using the general structure of $N^*(A)$ laid out in~\cite{BW:autotopism}*{Theorem 4.5} 
together with Theorem~\ref{thm:tensor-bimap}, we gain a very detailed understanding of 
$\pseudo(\wedge_A)$ when $\circ\colon \mathbb{F}_q^d\times \mathbb{F}_q^d\bmto \mathbb{F}_q^2$ is 
a sloped, indecomposable bimap of genus 2.
Put 
\begin{align}
\label{eq:def-K}
	K=\mathbb{F}_q[x]/(m(x)),
\end{align}
where $m(x)$, the minimal polynomial of the slope of $\circ$, is a power
of an irreducible polynomial. Hence,
$K\cong L[t]/(t^e)$ where $L/\mathbb{F}_q$ is an algebraic field extension,
 and
\begin{align}
\label{eq:pseudo-tensor}
1 \longrightarrow \isom(\wedge_A) \longrightarrow \pseudo(\wedge_A) \longrightarrow
 \Gamma {\rm L}(1,K) \longrightarrow 1
\end{align}
is a short exact sequence,
where $\isom(\wedge_A)$ is the kernel of the action of $\pseudo(\wedge_A)$ on 
$V\wedge_A V$.  The algorithms to find generators and further structure of isometry groups were 
given in \citelist{\cite{BW:isom}\cite{BW:slope}}.  Hence, the group we must understand is
\begin{align*}
	\pseudo(\wedge_A)/\isom(\wedge_A) & \cong\Gamma{\rm L}(1,K)=K^{\times}\rtimes {\rm Aut}(K).
\end{align*}
In doing so we shall obtain
an alternative factorization of this group that we will need for the remaining computations.
\medskip

The group $\Aut(K)=\Aut(L[t]/(t^e))$ satsifies 
\begin{align*}
	1\to \Sigma \to \Aut(K)\to {\rm \Gamma L}(1,L)\to 1,
\end{align*}
where $\Sigma=C_{\Aut(K)}((t)/(t^2))$ is a quotient of the notorious {\em Nottingham group}, 
a well-studied pro $p$-group~\cite{LGMc}*{Section 12.4}. 
In particular generators for ${\rm \Gamma L}(1,L)=L^{\times}\rtimes {\rm Gal}(L/\mathbb{Z}_p)$ are
known.
The group $\Sigma$ consists of {\em substitution automorphisms},
\begin{align*}
	\Lambda_{a(t)} \colon p(t)  \mapsto p(a(t)),
\end{align*}
where $a(t)=t+a_2 t^2+\cdots$, and
is generated by $\{\Lambda_{t+t^2},\Lambda_{t+t^3}\}$.

\subsubsection{Solving the transporter problem}
\label{subsec:transporter}
Our final concern is to solve for the transporter problem: 
{\em given subspaces $U,V$ of $\Bbb{F}_q^d\wedge_A\Bbb{F}_q^d$,
find $(\alpha,\hat{\alpha})\in \pseudo(\wedge_A)$
sending $U$ to $V$, or prove that no such $(\alpha,\hat{\alpha})$ exists.}

Our algorithm handles the Galois group of $L$ by trial and error.
We  work with the remaining part -- namely with $G:=  \Sigma K^{\times}$ -- in a 
more refined manner. To facilitate our computations, we choose generators for $G$
that produce a convenient factorization.
First, as a consequence of Wedderburn's principal theorem, $K^{\times}$ factorizes as 
$Q_1\rtimes G_1$, with $Q_1$ unipotent, and $G_1$
isomorphic to the multiplicative group of a field. Secondly, as we saw above, there is an
analogous factorization, $Q_2\rtimes G_2$, of $\Sigma$. Put $Q:=Q_1Q_2$, and $J:=J(K)$, the Jacobson radical of $K$.
The crucial properties of the factorization $QG_1G_2$ for our purpose are as follows:
\smallskip

(i) $Q$ is a unipotent group;

(ii) there are fields $L_1,L_2$ such that $G_1=L_1^{\times}$ and $G_2=L_2^{\times}$; and

(iii) $G_1$ acts faithfully on the $k$-space $K/J$, and $G_2$ acts faithfully on $J/J^2$.
\smallskip

Before proceeding further, we require two different ``transporter" algorithms that
will solve our problem in special cases. The proof of the following result
generalizes an earlier algorithm of L. R\'{o}nyai developed for the case of fields; see \cite{LW}*{Lemma~4.8}.

\begin{lem}
\label{lem:Ronyai}
Let $R$ be a $k$-algebra of matrices, $k$ a finite field. 
Given $k$-subspaces $X,Y$ of $R$ with $\dim_k X=\dim_k Y$, in polynomial time one can find $r\in R^{\times}$ with
$Xr=Y$, or decide that no such $r$ exists.
\end{lem}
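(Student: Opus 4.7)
The plan is to reduce the search for $r \in R^\times$ with $Xr = Y$ to detecting an invertible element in a linear subspace of $R$, and to carry out that detection via the regular representation. First I would compute the $k$-subspace $T = \{r \in R : Xr \subseteq Y\}$. Fixing a $k$-basis $x_1, \dots, x_m$ of $X$ and extending a $k$-basis of $Y$ to one of $R$, the conditions $x_i r \in Y$ ($1 \leq i \leq m$) form a homogeneous $k$-linear system in the coordinates of $r$; solving this system in polynomial time yields a basis $t_1, \dots, t_s$ of $T$. Crucially, the dimension equality $\dim_k X = \dim_k Y$ forces $Xr = Y$ for any $r \in T \cap R^\times$, since $\dim_k(Xr) = \dim_k X = \dim_k Y$ and $Xr \subseteq Y$. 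The task therefore reduces to finding an element of $T \cap R^\times$, or certifying emptiness.

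For the unit-search I exploit that $R$ is a $k$-algebra of matrices, so $R \hookrightarrow \M_n(k)$ for some $n$. In a finite-dimensional $k$-algebra, $r \in R$ is a unit if and only if the left-multiplication endomorphism $L_r \colon R \to R$ is $k$-linearly bijective; equivalently the regular norm $N(r) := \det(L_r) \in k$ is nonzero. Parametrising $r(\xi) := \xi_1 t_1 + \cdots + \xi_s t_s$, the map $\xi \mapsto N(r(\xi))$ is a polynomial over $k$ of degree at most $\dim_k R$, evaluable at any $\xi \in k^s$ in polynomial time via a single determinant computation.

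It remains to decide whether $N(r(\xi))$ vanishes identically as a function on $k^s$, and, when it does not, to exhibit a witness $\xi$. This is an instance of polynomial identity testing. When $|k|$ exceeds $\dim_k R$ a Schwartz--Zippel argument gives a Las Vegas polynomial-time procedure: a random $\xi$ is a non-root with probability at least $1 - \dim_k R / |k|$. When $|k|$ is small, one passes to an extension $k'/k$ of moderate degree where Schwartz--Zippel again applies, which suffices to decide whether the reduction of $N(r(\xi))$ modulo $\langle \xi_i^{|k|} - \xi_i \rangle$ is zero, after which one can search $k^s$ deterministically. Any $\xi \in k^s$ with $N(r(\xi)) \neq 0$ yields the required unit $r \in T \cap R^\times$. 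The main obstacle is precisely this final detection-of-a-unit step---an instance of Edmonds' problem---over small fields, where the extension-field device is the classical workaround.
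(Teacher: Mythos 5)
Your first half matches the paper: compute the transporter space $\mathcal{S}=\{r\in R: Xr\subseteq Y\}$ by linear algebra, and note that the hypothesis $\dim_k X=\dim_k Y$ forces $Xr=Y$ for any invertible $r\in\mathcal{S}$. The gap is in the unit-finding step. Deciding whether an arbitrary $k$-subspace of matrices contains an invertible element is exactly Edmonds' problem, and your workaround for small $k$ does not work. Passing to an extension $k'/k$ and applying Schwartz--Zippel only decides whether $N(r(\xi))$ is the \emph{zero polynomial}; it says nothing about whether $N$ vanishes as a \emph{function} on $k^s$, and these genuinely differ. For instance, over $k=\mathbb{Z}_2$ with $A=\diag(1,0,1)$, $B=\diag(0,1,1)$, one has $\det(\xi_1A+\xi_2B)=\xi_1\xi_2(\xi_1+\xi_2)$, which is nonzero as a polynomial (and has non-roots over $\mathbb{F}_4$) yet vanishes on all of $\mathbb{Z}_2^2$; so the span of $A,B$ contains no unit over $k$ even though its scalar extension does. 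A witness over $k'$ therefore does not yield the required $r\in R^{\times}$, and your fallback of searching $k^s$ exhaustively is exponential in $s$. Since the intended applications in the paper include small fields (indeed $q$ can be as small as $p$), this step cannot be left to generic polynomial identity testing.

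The paper sidesteps Edmonds' problem by exploiting the two-sided structure of the problem. It also computes the reverse transporter $\mathcal{T}=\{b\in R: Yb\subseteq X\}$, forms the product set $\mathcal{S}\mathcal{T}\subseteq\End(X)$, and then searches for an invertible element not in the linear span but in the \emph{ring generated by} $\mathcal{S}\mathcal{T}$, using the module-isomorphism machinery of Brooksbank--Luks. Finding a unit in a finite matrix algebra given by generators is deterministic polynomial time via structure theory (radical plus semisimple quotient), unlike the linear-space version. The unit is produced as a word $z=s_1t_1\cdots s_nt_n$ with $s_i\in\mathcal{S}$, $t_i\in\mathcal{T}$; invertibility of $z$ on $X$ forces $s_1$ to be injective on $X$, and the dimension count then gives $Xs_1=Y$. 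So the element transporting $X$ to $Y$ is extracted as the first factor of the word, and the whole procedure is deterministic for every field size.
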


\begin{proof}
Assume that $X$ and $Y$ have equal dimension $t$.
First, find a basis for the $k$-space $\mathcal{S}=\{a\in R\colon Xa\subseteq Y\}$ as follows.
Let $b_1,\ldots,b_n$ be a $k$-basis for $R$. Fix bases 
for $X$ and $Y$. Let $y_1,\ldots,y_t$ be the basis for $Y$,
and write $y_q=\sum_{p=1}^n\gamma_{pq}b_p$ for $1\leq q\leq t$.
Now, for each basis element $x=\sum_{i=1}^n\alpha_ib_i$ of $X$,
we want all scalars $z_1,\ldots,z_n,w_1,\ldots,w_t$ such that
\begin{align*}
\left(\sum_{i=1}^n \alpha_ib_i\right)\left(\sum_{j=1}^nz_jb_j\right)
=\sum_{i,j=1}^n\alpha_iz_jb_ib_j &= \sum_{p=1}^t\sum_{q=1}^n\gamma_{pq}w_pb_q.
\end{align*}
Writing each $b_ib_j$ as a linear combination of $b_1,\ldots,b_n$ (these are the
{\em structure constants} of $R$ relative to our chosen basis) , 
a basis for $\mathcal{S}$ is obtained as the solution of the resulting linear
system in the unknowns  $z_1,\ldots,z_n,w_1,\ldots,w_t$
by projecting onto the $z_i$ coordinates.

Evidently, if $\mathcal{S}=0$, no $r\in R$ exists with $Xr\subseteq Y$, so we may assume that $\mathcal{S}\neq 0$.
Now we must find an {\em invertible} element in $\mathcal{S}$, if such exists. We present
a deterministic method, but in practice such an element is found more efficiently by random search.

Compute $\mathcal{T}=\{b\in R\colon Yb\subseteq X\}$ as above (interchanging the roles of $X$ and $Y$).
Form the set $\mathcal{S}\mathcal{T}=\{st: s\in \mathcal{S},t\in\mathcal{T}\}\subset \End(X)$.
Then using the algorithm of \cite{BL:mod-iso}*{Theorem 2.4} we prove that there are no invertible
elements in $\mathcal{S}\mathcal{T}$, or we construct an invertible element
$z$ of the subring generated by $\mathcal{S}\mathcal{T}$ as a product $z=s_1t_1s_2t_2\cdots s_nt_n$,
$s_i\in \mathcal{S}$, $t_i\in \mathcal{T}$.  As $s_1$ is injective and
$\dim_k X=\dim_k Y$, $Xs_1$ is mapped injectively into $Y$. It follows that $Xs_1=Y$.
\end{proof}

\begin{remark}
\label{rem:dual-trick}
We intend to apply Lemma~\ref{lem:Ronyai} in the case when $X$ and $Y$ have codimension 2 in $R$.
By translating the problem to the dual space of $R$, we can solve the transporter problem instead for 
spaces of dimension 2. This reduces the complexity of computing the $k$-spaces 
$\mathcal{S}$ and $\mathcal{T}$ by a factor of $O(d)$.
\end{remark}

The following  algorithm is a small application of the deeper theorem of \cite{Luks:mat}*{Theorem 3.2(7)}.  
It is also known by many as the ``unipotent stabilizer algorithm'' (see also~\cite{Ruth2}).

\begin{lem}
\label{lem:unipotent}
Let $Q$ be a unipotent subgroup of $\GL_d(\Bbb{F}_q)$.
Given subspaces $X,Y$ of $\Bbb{F}_q^d$, 
in polynomial time one can find $u\in Q$ with
$Xu=Y$ if such $u$ exists.
\end{lem}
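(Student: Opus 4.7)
My plan is to present the result as an instance of Luks' polynomial-time machinery for orbits of unipotent groups on affine varieties \cite{Luks:mat}*{Theorem~3.2(7)}, specialized to the action on subspaces. The starting point is that $Q$, being unipotent, is a finite $p$-group with $p=\mathrm{char}\,\mathbb{F}_q$, and so its fixed subspace $V^Q=\{v\in V\colon vq=v\text{ for all }q\in Q\}$ is nonzero. Iterating fixed-space computations on successive quotients produces, in polynomial time, a complete $Q$-invariant flag
\[
V=V_0>V_1>\cdots>V_d=0,\qquad \dim V_{i-1}/V_i=1,
\]
with $Q$ acting trivially on each quotient. Fixing a basis compatible with the flag identifies $Q$ with a subgroup of the upper unitriangular matrix group $U_d(\mathbb{F}_q)$, and hence with a closed subvariety of the affine space $\mathbb{F}_q^{d(d-1)/2}$. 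In these coordinates the action of $Q$ on the Pl\"ucker coordinates of subspaces of $V$ is polynomial of controlled degree.

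With this set-up in hand, I induct on the derived length of $Q$, which is at most $d-1$ by the containment in $U_d(\mathbb{F}_q)$. The base case is $Q$ abelian: here the action of $Q$ on the Pl\"ucker coordinates of $X$ is by a commuting family of nilpotent linear endomorphisms, so the $Q$-orbit of $X$ is an affine coset whose direction is the image of the Lie-algebra map $q\mapsto q-I$. The transporter condition $Xu=Y$ then becomes an $\mathbb{F}_q$-linear system, solved by Gaussian elimination, which simultaneously yields generators for $\mathrm{Stab}_Q(X)$. For the inductive step I set $N=[Q,Q]$ and apply the inductive hypothesis to $N$, obtaining generators for $\mathrm{Stab}_N(X)$ together with a representative (if any) of the $N$-transporter from $X$ to $Y$. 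The residual problem for the abelian quotient $Q/N$ acting on $N$-orbits of subspaces then reduces to the abelian base case.

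The principal obstacle is ensuring that the recursion remains polynomial: a naive coset-by-coset approach through a composition series of order $|Q|$ would be exponential in $d\log q$. The key point, supplied by Luks' framework, is that at every level of the derived-series recursion the residual transporter problem still has the affine-variety structure needed to reduce to an abelian orbit computation, and abelian unipotent orbits are captured by a \emph{single} $\mathbb{F}_q$-linear system rather than by enumeration. Combining this with the polynomial-time construction of the flag and of the derived series yields the algorithm, with running time polynomial in $d$ and $\log q$.
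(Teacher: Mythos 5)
The paper does not actually prove this lemma: it simply cites Luks \cite{Luks:mat}*{Theorem 3.2(7)} and Schwingel's ``unipotent stabilizer algorithm'' \cite{Ruth2}, so your task was to reconstruct an argument, and the reconstruction has a genuine gap in its base case. For an abelian unipotent group $Q$ the orbit of a point (a vector, or the Pl\"ucker point of $X$) is \emph{not} an affine coset, and its direction is \emph{not} the image of $q\mapsto q-I$. Take $Q=\langle I+n\rangle$ with $n$ nilpotent of degree $3$ and $p\geq 3$: then $(I+n)^t=I+tn+\binom{t}{2}n^2$, so the orbit of $w$ is the ``parabola'' $\{w+twn+\binom{t}{2}wn^2\}$ of size $p$, which is neither a line nor the plane $w+\langle wn,wn^2\rangle$. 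Consequently the transporter condition for abelian unipotent $Q$ is not a single $\mathbb{F}_q$-linear system, and since the whole induction bottoms out there, the argument collapses. The inductive step is also doing real work that is left unexamined: passing to ``$N$-orbits of subspaces'' as points of a new space on which $Q/N$ acts affinely is precisely the content of Luks's theorem, not a formal reduction.

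The fix is to recurse not on the derived series but on the congruence filtration determined by the $Q$-invariant flag $V=V_0>V_1>\cdots>V_d=0$: set $Q_i=\{q\in Q: (V_j)(q-I)\subseteq V_{j+i}\ \text{for all } j\}$. Each $Q_i/Q_{i+1}$ is elementary abelian, and---this is the point your version misses---for $q,q'\in Q_i$ the product $(q-I)(q'-I)$ maps $V_j$ into $V_{j+2i}$, so \emph{modulo the next layer} the action really is linear in $q-I$ and orbits on the relevant subquotient data are cosets of subspaces computable by one linear system. One then lifts a solution layer by layer (at most $d$ layers), solving a linear system and carrying along generators of the stabilizer at each stage; that is the standard unipotent stabilizer/transporter algorithm the paper is invoking.
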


We can now complete the description of our algorithm. Recall that $U$ and $V$ are given 
$\mathbb{F}_q$-subspaces of $K$, $J$ is the Jacobson radical of $K$, 
and $G=L_1^{\times}L_2^{\times}Q$.
We wish to decide if there exists $g\in G$ such that $Ug=V$.
\smallskip

First, construct the representation of $L_1$ on $K/J$, and use Lemma~\ref{lem:Ronyai} to 
find $g_1\in L_1^{\times}$ such that $Ug_1\equiv V\;({\rm mod}\;J)$, if such exists.
Put $U_1=Ug_1$.
Next, construct the representation of $L_2$ on $J/J^2$, and use Lemma~\ref{lem:Ronyai} 
again to find $g_2\in L_2^{\times}$
such that $U_1Jg_2\equiv VJ\;({\rm mod}\;J^2)$, if such exists.
Put  $U_2=U_1g_2$.
Finally, use Lemma~\ref{lem:unipotent} to find $w\in Q$
with $U_2w=V$, if such exists. Return $g:=g_1g_2w$. 
Note, if we failed to construct any one of the elements $g_1,g_2,w$, then there is
no element $g\in G$ transporting $U$ to $V$. 

\subsection{Proof of Theorem~\ref{thm:indecomp-bimap-alg}.} The correctness
of the algorithms presented in Sections~\ref{subsec:flat-indec} and~\ref{subsec:sloped-indec} 
has already been established. It remains to analyze complexity.

The sloped case in Section~\ref{subsec:sloped-indec} requires more analysis, and we proceed
one subsection at a time. First, conjugating the algebra $A(\bullet)$ to $A(\circ)$ is done by Theorem~\ref{thm:conjugacy}.
Secondly, building the tensor product $\V\wedge_A\V$ and generators of $\pseudo(\wedge_A)$ is
done in polynomial time in Section~\ref{subsec:tensor}.  That leaves Section~\ref{subsec:transporter}, which requires
more care. For each $\gamma\in {\rm Gal}(L)$ we seek $g\in \Aut(K)$ with $(V\gamma)g=U$.
This uses two calls to Lemma~\ref{lem:Ronyai}, and one call to Lemma~\ref{lem:unipotent},
which are both polynomial time. The overall complexity is therefore polynomial, since
$|\Gamma|\leq\frac{d}{2}$. \hfill $\square$
%%%%

%=============================================================================================

%============================================================================================

\section{General Bimaps of Genus 2}
\label{sec:iso-genus2}
We now consider arbitrary alternating 
bimaps $\circ,\bullet\colon \mathbb{F}_q^d\times\mathbb{F}_q^d\bmto \mathbb{F}_q^2$
of genus 2.  
Much of the work has already been done in the indecomposable setting above,
but we must now combine the results of various indecomposables.  
Here, the theory becomes difficult.  As Example~\ref{subsec:special-ex} 
shows,  for instance, indecomposable factors may be glued together in different ways 
to produce bimaps that are not pseudo-isometric.  
In spite of these challenges we  prove the following
extension of Theorem~\ref{thm:indecomp-bimap-alg}.

\begin{thm}
\label{thm:bimap-alg}
There is an algorithm that, given  
alternating bimaps
$\circ,\bullet\colon \mathbb{F}_q^d\times\mathbb{F}_q^d\bmto
\mathbb{F}_q^2$
of genus 2, decides whether the bimaps are pseudo-isometric and, if so, 
constructs $(\phi,\hat{\phi})\in \GL(d,\mathbb{F}_q)\times\GL(2,\mathbb{F}_q)$ 
such that $u\phi\bullet v\phi= (u\circ v)\hat{\phi}$ for all $u,v\in\mathbb{F}_q^d$.
The algorithm is polynomial time if $q$ is bounded or
the number of pairwise pseudo-isometric indecomposable summands
of the input bimaps is bounded.  If $p$ is bounded the algorithms are deterministic,
otherwise they are Las Vegas. 
\end{thm}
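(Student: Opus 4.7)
The plan is to reduce pseudo-isometry of genus~2 bimaps to a matching problem on their generalized Pfaffians, using Theorem~\ref{thm:cent-hyp-alg}(i) to decompose each bimap into centrally indecomposable summands, Theorem~\ref{thm:det-method} to translate pseudo-isometry into compatibility of Pfaffians under $\Gamma L(2,\mathbb{F}_q)$, and Theorem~\ref{thm:indecomp-bimap-alg} to realize the matching on each indecomposable block.

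First I would compute fully-refined orthogonal decompositions $\circ=\circ^{(1)}\perp\cdots\perp\circ^{(s)}$ and $\bullet=\bullet^{(1)}\perp\cdots\perp\bullet^{(t)}$ in polynomial time via Theorem~\ref{thm:cent-hyp-alg}(i). If $s\ne t$, return false. For each summand I form its representing pair of alternating forms, classify it as sloped or flat according to Proposition~\ref{prop:slope-flat}, and (in the sloped case) compute its generalized Pfaffian $f_i(x,y)\in\mathbb{F}_q[x,y]/\mathbb{F}_q^{\times}$; flat summands are indexed only by dimension. By Theorem~\ref{thm:det-method}, a pseudo-isometry from $\circ$ to $\bullet$ with $W$-component $\hat\alpha\in\Gamma L(2,\mathbb{F}_q)$ exists if and only if there is a bijection $\sigma$ on indices (preserving sloped/flat type and dimension) such that $f_i^{\hat\alpha}\equiv g_{i\sigma}\pmod{\mathbb{F}_q^\times}$ for every sloped index $i$; by Proposition~\ref{prop:pseudo-flat} the flat summands impose no constraint on $\hat\alpha$ beyond dimension bookkeeping.

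Second I would search for admissible pairs $(\sigma,\hat\alpha)$. Sort the sloped indecomposables into types according to $\Gamma L(2,\mathbb{F}_q)$-orbits of Pfaffians, computed using Theorem~\ref{thm:indecomp-bimap-alg} as a subroutine; pseudo-isometry can hold only when the resulting type-multisets agree between $\circ$ and $\bullet$. When $q$ is bounded, enumerate all of $\Gamma L(2,\mathbb{F}_q)$ (a set of size $O(q^4\log q)$) and, for each candidate $\hat\alpha$, test whether the multisets $\{f_i^{\hat\alpha}\}$ and $\{g_j\}$ admit a bijective matching -- a polynomial-time check. When instead the maximum multiplicity $r$ of a type is bounded, pick representatives $i_1,\dots,i_\ell$ of the distinct types; matching $f_{i_1}^{\hat\alpha}$ to one of the at most $r$ candidates $g_{i_1\sigma}$ cuts $\Gamma L(2,\mathbb{F}_q)$ down to a coset of the $\Gamma L(2,\mathbb{F}_q)$-stabilizer of a degree-$n$ binary form, and iterating over further types intersects these stabilizers down to a set of size bounded independently of $q$, each member of which is tested directly. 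Once a valid $(\sigma,\hat\alpha)$ is found, Theorem~\ref{thm:indecomp-bimap-alg} applied to each pair $(\circ^{(i)},\bullet^{(i\sigma)})$ with the prescribed $\hat\alpha$ supplies a pseudo-isometry $\phi_i$, and $\phi=\bigoplus_i\phi_i$ assembled with $\hat\alpha$ gives the output.

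The main obstacle is controlling the number of candidate $\hat\alpha$ when many summands share a pseudo-isometry type, because within a type any permutation is a priori allowed and the associated Pfaffians no longer pin down $\hat\alpha$; this is precisely why the theorem hypothesizes either $q$ bounded or the per-type multiplicity bounded. A secondary technical point is the flat case, where distinct flat summands of the same dimension are isometric and carry no Pfaffian data: here I rely on Proposition~\ref{prop:pseudo-flat}, which guarantees that every $\hat\alpha\in\Gamma L(2,\mathbb{F}_q)$ lifts to a pseudo-isometry of any flat indecomposable, so that the flat blocks may simply be permuted compatibly with any $\hat\alpha$ already determined on the sloped part. Correctness in both directions is furnished by Theorem~\ref{thm:det-method} together with Theorem~\ref{thm:transitive}, which ensures that the particular fully-refined decomposition produced in the first step is as good as any other up to an isometric correction absorbed into $\phi$.
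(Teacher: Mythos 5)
Your reduction to Pfaffians and your treatment of the bounded-$q$ case coincide with the paper's ``Pfaffian isomorphism test for small fields'' (Section~\ref{subsec:small}): decompose via Theorem~\ref{thm:cent-hyp-alg}(i), invoke Theorem~\ref{thm:transitive} to justify the particular decomposition, discard the flat blocks using Proposition~\ref{prop:pseudo-flat}, and exhaustively search $\Gamma{\rm L}(2,\mathbb{F}_q)$ against the criterion of Theorem~\ref{thm:det-method}. That half is fine.

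The gap is in the second regime, where $q$ is large but the number of pairwise pseudo-isometric summands is bounded. Your plan is to pin down $\hat\alpha$ by matching one representative per type, observing that each match confines $\hat\alpha$ to a coset of the $\Gamma{\rm L}(2,\mathbb{F}_q)$-stabilizer of a degree-$n$ binary form, and asserting that intersecting these stabilizers leaves ``a set of size bounded independently of $q$'' whose members you test directly. That assertion is false: the projective stabilizer of a binary form such as $x^a y^b$ (mod $\mathbb{F}_q^{\times}$) contains the full diagonal torus, of order roughly $q$, and an intersection of such stabilizers can remain that large (take a single sloped type of multiplicity $2$ with Pfaffians $x^2y^3$ and $x^3y^2$ on both sides). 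Testing each member of a set of size $\Theta(q)$ is exponential in the input length $\Theta(d^2\log q)$ -- exactly the $O(q^3)$ bottleneck the paper identifies as ``unavoidable'' for the Pfaffian method. You also give no algorithm for computing these stabilizers or for intersecting their cosets; Theorem~\ref{thm:indecomp-bimap-alg} returns one pseudo-isometry, not the stabilizer.

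The paper escapes this by switching methods entirely for large $q$: it first reduces to globally sloped bimaps (here Proposition~\ref{coro:field-problem} is needed, since a non-sloped bimap with all sloped summands forces $q<d$ and is pushed back into the small-field case), then runs the adjoint-tensor method of Section~\ref{sec:adjten} on the \emph{whole} bimap. There the search over $\pseudo(\wedge_A)$ factors as a unipotent group, tori coming from fields, a product $\Gamma$ of Galois groups, and a permutation group $\Pi$ of isotypic components; the continuous parts are handled by linear algebra (Lemmas~\ref{lem:Ronyai} and~\ref{lem:unipotent}), and only $\Gamma\Pi$ is enumerated by brute force. It is $|\Gamma\Pi|$ -- not an intersection of binary-form stabilizers -- that the bounded-multiplicity hypothesis controls. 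To repair your proof you would need either to supply this adjoint-tensor argument or to prove a genuine structure/computation result for intersections of stabilizers of binary forms in $\Gamma{\rm L}(2,\mathbb{F}_q)$ that the paper does not claim and that, as stated, does not hold.
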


Not surprisingly, 
the first step is to find a fully-refined orthogonal decomposition of
the input bimaps, using Theorem~\ref{thm:cent-hyp-alg}(i).
By Theorem~\ref{thm:transitive}(i) the multiset of terms in such a
decomposition is unique up to pseudo-isometry.
Hence, if the terms in the two decompositions cannot be paired
up pseudo-isometrically, then the bimaps themselves are not pseudo-isometric.
In particular, if the multisets
of dimensions of indecomposables are different for the two bimaps,
then they are not pseudo-isometric. Furthermore, assuming the dimensions
of the flat indecomposables are compatible, $\circ$ and $\bullet$ are pseudo-isometric if,
and only if, their restrictions to the sum of the sloped parts are pseudo-isometric. 
Hence, we may assume that the indecomposable
factors of each bimap are sloped.

We reiterate that deciding pseudo-isometry of $\circ$ and $\bullet$ is not as straight-forward 
as matching up isomorphic sloped indecomposable factors -- more
subtlety is required. We present two rather different approaches. The first
is very effective when $|\mathbb{F}_q|$ is small, and is based directly on the theory
developed in Section~\ref{sec:genus2}. The second, which we use
for larger fields, is the adjoint-tensor method. 
Before proceeding we must first address a curiosity that 
can arise in our new setting. 

\subsection{A rare configuration}
\label{sec:small-field}
We are assuming that $\circ,\bullet\colon \mathbb{F}_q^d\times\mathbb{F}_q^d\bmto\mathbb{F}_q^2$
are nondegenerate bimaps whose indecomposable summands are sloped. 
We wish to assume that $\circ$ and $\bullet$ are sloped 
{\em globally}, meaning that we can choose
a pair $\{\Phi_1,\Phi_2\}$ of forms representing each one with $\Phi_1$ nondegenerate.
Clearly an initial choice $\{\Phi_1,\Phi_2\}$ can be made for which both forms are
degenerate. For example,
\begin{align*}
	\Phi_1 & = \left[\begin{array}{cc|cc}	& & 1 & \\ & & & 0\\ \hline -1 & & & \\ & 0 & & 	\end{array}\right]
&	\Phi_2 & = \left[\begin{array}{cc|cc}	& & 0 & \\ & & & 1\\ \hline 0 & & & \\ & -1 & & 	\end{array}\right],
\end{align*}
but here we can replace the bimap with the pseudo-isometric bimap represented by
$\{\Phi_1+\Phi_2,\Phi_2\}$ for which $\Phi_1+\Phi_2$ is nondegenerate.  When the field is sufficiently large, we can always
make such adjustments. In particular, the following holds.

\begin{prop}
Let $k$ be an infinite field, and $\circ\colon k^d\times k^d\bmto k^2$ a nondegenerate,
alternating $k$-bimap of genus 2. Then $\circ$ is sloped if, and only if, 
all of its indecomposable factors are sloped.
\end{prop}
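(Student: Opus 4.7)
The plan is to reduce everything to a single isotopism invariant: the homogeneous polynomial
\[
 P(\alpha,\beta) \;=\; \det(\alpha\Phi_1+\beta\Phi_2) \;\in\; k[\alpha,\beta]
\]
of degree $d$, attached to any pair $\{\Phi_1,\Phi_2\}$ of forms representing $\circ$. A change of basis in $V$ by $X\in\GL(d,k)$ rescales $P$ by $\det(X)^2$, while a change of basis in $W$ replaces $P(\alpha,\beta)$ by $P(a\alpha+c\beta,\,b\alpha+d\beta)$ for the corresponding matrix $\bigl(\begin{smallmatrix}a&b\\ c&d\end{smallmatrix}\bigr)\in\GL(2,k)$. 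In particular, the property ``$P$ is the zero polynomial'' depends only on the pseudo-isometry class of $\circ$. The elementary but crucial observation I will then exploit is that, unpacking Remark~\ref{rem:general-slope}, $\circ$ is sloped precisely when there exist $\alpha,\beta\in k$ with $P(\alpha,\beta)\neq 0$, and, because $k$ is infinite, this is in turn equivalent to $P\not\equiv 0$.

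Next I would fix a fully-refined orthogonal decomposition $V=V_1\oplus\cdots\oplus V_s$ of $\circ$ produced by Theorem~\ref{thm:cent-hyp-alg}(i) (whose existence uses only that $\circ$ is nondegenerate). With respect to this decomposition, $\Phi_1$ and $\Phi_2$ are simultaneously block-diagonal, giving a factorization
\[
 P(\alpha,\beta) \;=\; \prod_{i=1}^s P_i(\alpha,\beta), \qquad
 P_i(\alpha,\beta)=\det\!\bigl(\alpha\Phi_1^{(i)}+\beta\Phi_2^{(i)}\bigr).
\]
Hence $P\not\equiv 0$ if and only if every $P_i\not\equiv 0$. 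Combined with the observation above, both directions of the proposition will follow once I verify that each indecomposable summand is sloped iff its $P_i$ is nonzero.

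The remaining verification splits along the dichotomy of Proposition~\ref{prop:slope-flat}. If the $i$-th summand is flat, then $\dim V_i$ is odd by Proposition~\ref{prop:slope-flat}, so every $k$-linear combination $\alpha\Phi_1^{(i)}+\beta\Phi_2^{(i)}$ is an alternating form on an odd-dimensional space and therefore singular; hence $P_i\equiv 0$. Conversely, if the summand is sloped, then by Proposition~\ref{prop:slope-flat} (together with Theorem~\ref{thm:Dieudonne}(i)) there is a basis of $V_i$ in which $\Phi_1^{(i)}=\bigl(\begin{smallmatrix}0&I\\-I&0\end{smallmatrix}\bigr)$, so $P_i(1,0)=\det\Phi_1^{(i)}=\pm 1\neq 0$ and in particular $P_i\not\equiv 0$.

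Putting these pieces together proves both implications simultaneously. The only place where the hypothesis that $k$ is infinite enters is in the passage from ``$P\not\equiv 0$'' to ``$P$ has a nonzero value on $k\times k$''; this is the one genuine obstacle, and it is exactly the point at which the argument fails over small fields, motivating the separate treatment carried out in the rest of Section~\ref{sec:small-field}.
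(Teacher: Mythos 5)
Your proposal is correct and follows essentially the same route as the paper: the polynomial $P(\alpha,\beta)=\det(\alpha\Phi_1+\beta\Phi_2)$ is exactly the generalized discriminant $\disc(\Phi_1,\Phi_2)$ used there, and both arguments factor it over a fully-refined orthogonal decomposition and invoke the infinitude of $k$ to pass from $\disc\not\equiv 0$ to a nondegenerate linear combination. The only difference is cosmetic: you spell out the forward direction (flat summands have odd dimension, so their blocks contribute an identically zero factor), which the paper dismisses as clear.
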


\begin{proof}
The forward direction is clear.  
For the converse, suppose 
\begin{align*}
(i=1,2) & & \Phi_i=\diag(\Phi_i^{(1)},\ldots,\Phi_i^{(t)}),
\end{align*} 
represents $\circ$ and 
respects a fully-refined orthogonal decomposition, where
each $\{\Phi_1^{(j)},\Phi_2^{(j)}\}$ is sloped.
A linear combination of $\Phi_1,\Phi_2$ is nondegenerate if,
and only if, some evaluation of $\disc(\Phi_1,\Phi_2)\in k[x,y]$
does not vanish. 
By assumption, each $\disc(\Phi_1^{(i)},\Phi_2^{(i)})\neq 0$ (as a polynomial),
so $\disc(\Phi_1,\Phi_2)=\prod_i \disc(\Phi_1^{(i)},\Phi_2^{(i)})\neq 0$.
As $k$ is infinite,
there is a point not on the variety of $\disc(\Phi_1,\Phi_2)$.
\end{proof}

For finite fields, the situation is more delicate.

\begin{lemma}
For every finite field $\mathbb{F}_q$, there is an integer $d$ and an alternating bimap
$\circ\colon \mathbb{F}_q^d\times\mathbb{F}_q^d\bmto\mathbb{F}_q^2$,
all of whose indecomposable summands are sloped,
such that every pair $\{\Phi_1,\Phi_2\}$ of forms
representing $\circ$ consists of degenerate matrices.
\end{lemma}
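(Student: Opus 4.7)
The plan is to reformulate the conclusion as a vanishing statement for the Pfaffian, and then produce an explicit bimap. Two pairs of forms represent the same bimap $\circ$ precisely when they differ by a $\GL_2(\F)$-change of basis on the codomain (together with a $\GL(V)$-action on the domain, which preserves degeneracy). Hence every representing pair $\{\Phi_1,\Phi_2\}$ consists of degenerate matrices if and only if every nonzero linear combination $\alpha\Phi_1+\beta\Phi_2$ is degenerate, which is to say $\Pf(\Phi_1,\Phi_2)(\alpha,\beta)=0$ for every $(\alpha,\beta)\in\F^2\setminus\{0\}$. So the task is to build a bimap whose Pfaffian, while a nonzero polynomial in $\F[x,y]$, vanishes on every $\F$-rational point of $\mathbb{P}^1$.

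My construction takes $\circ=\bigoplus_{c\in\F}\circ_c\;\oplus\;\circ_\infty$ as a direct sum of $q+1$ four-dimensional sloped indecomposable bimaps of genus $2$. For each $c\in\F$, let $\circ_c$ be built via the hyperbolic construction of Proposition~\ref{prop:slope-flat} from the Kronecker pair $\{\Psi_1^{(c)},\Psi_2^{(c)}\}=\{I_2,\,C((t-c)^2)\}$; by Theorem~\ref{thm:Dieudonne}(i) this pair is indecomposable, so $\circ_c$ is a sloped indecomposable genus $2$ bimap with Pfaffian $\det(xI_2+yC((t-c)^2))=(x+cy)^2$. I also include one extra summand $\circ_\infty$, built from the \emph{swapped} pair $\{C(t^2),\,I_2\}$: interchanging the two components is a $\GL_2(\F)$-action on the codomain, so $\circ_\infty$ is isomorphic to the canonical $(a(t),e)=(t,2)$ sloped indecomposable genus $2$ bimap, but in the ambient $W$-basis of the direct sum its Pfaffian is $\det(xC(t^2)+yI_2)=y^2$. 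Since Pfaffians multiply over direct sums, the total Pfaffian is
\[
\Pf(\Phi_1,\Phi_2)(x,y)\;=\;y^2\prod_{c\in\F}(x+cy)^2,
\]
a nonzero polynomial in $\F[x,y]$ of degree $2(q+1)$.

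To finish one verifies that this Pfaffian vanishes on all of $\F^2\setminus\{0\}$: at $(a,0)$ the factor $y^2$ vanishes, and at $(a,b)$ with $b\ne 0$ the factor $(x+cy)^2$ with $c=-a/b\in\F$ vanishes. The only conceptual subtlety is the necessity of the extra summand $\circ_\infty$: using only the canonical-form summands $\circ_c$ fails, since each of them satisfies $P_c(1,0)=\det(I_2)=1$, and hence their product never vanishes at the projective point $(1{:}0)$. Including one summand whose first form $\Phi_1$ is degenerate — while remaining sloped indecomposable thanks to its invertible partner $\Psi_2=I_2$ — supplies the missing factor $y^2$ and closes the argument.
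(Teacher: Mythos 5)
Your proof is correct, and it follows the same basic strategy as the paper: reformulate the conclusion as the vanishing of the generalized Pfaffian at every point of the projective line over $\mathbb{F}_q$, then realize each required linear factor by an orthogonal summand that is a sloped indecomposable pair. The only differences of substance are cosmetic: you use $4$-dimensional summands built from $\{I_2, C((t-c)^2)\}$ where $2$-dimensional hyperbolic blocks $\left\{\left[\begin{smallmatrix}0&1\\-1&0\end{smallmatrix}\right],\left[\begin{smallmatrix}0&\omega\\-\omega&0\end{smallmatrix}\right]\right\}$ already suffice, so your $d$ is twice as large as necessary.

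One point in your favor is worth flagging explicitly. The paper's own proof takes the orthogonal sum only over the $q$ summands indexed by $\omega\in\mathbb{F}_q$, yielding Pfaffian $\prod_{\omega\in\mathbb{F}_q}(x-\omega y)$, which does \emph{not} vanish at the point $(1{:}0)$; indeed the first form of that sum is a nondegenerate symplectic form, so the construction as literally written fails the conclusion. Your insistence on the extra summand $\circ_\infty$ contributing the factor $y^2$ (equivalently, a block whose \emph{second} form is the invertible one) is exactly the correction needed, and your closing remark explaining why the affine summands alone cannot work identifies the issue precisely. So your argument is not merely an independent verification but a repaired version of the published one.
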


\begin{proof}
Consider a pair $\{\Phi_1,\Phi_2\}$ representing an alternating bimap $\circ\colon
\mathbb{F}_q^d\times\mathbb{F}_q^d\bmto\mathbb{F}_q^2$. If no 
nondegenerate linear combination of $\Phi_1,\Phi_2$ exists, then clearly
the Pfaffian $\Pf(\Phi_2,\Phi_2)$ vanishes on all of $PG(1,\mathbb{F}_q)$.  
This means that $\prod_{\omega\in \mathbb{F}_q} (x-\omega y)\in\mathbb{F}_q[x,y]$ 
divides $\Pf(\Phi_1,\Phi_2)$.
For each $\omega\in \mathbb{F}_q$, 
\begin{align*}
	\Pf\left(\begin{bmatrix}
	0 & 1 \\ -1 & 0 
	\end{bmatrix},
	\begin{bmatrix}
	0 & \omega \\ -\omega & 0 
	\end{bmatrix}\right) & = x-\omega y.
\end{align*}
The orthogonal sum of all such pairs yields a pair of forms whose
discriminant vanishes on $PG(1,\mathbb{F}_q)$, but whose indecomposable summands
are all sloped.
\end{proof}

Fortunately, our analysis comes to the rescue. The following scholium 
allows us to treat $\mathbb{F}_q$ as a ``small" field whenever 
such a configuration occurs.

\begin{prop}
\label{coro:field-problem}
If $\circ\colon \mathbb{F}_q^d\times\mathbb{F}_q^d\bmto\mathbb{F}_q^2$ is an alternating non-sloped
bimap, all of whose indecomposable summands are sloped, then $q<d$.
\end{prop}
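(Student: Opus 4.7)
The plan is to exploit the generalized Pfaffian introduced in Section~\ref{subsec:pfaffian}. Since every indecomposable summand of $\circ$ is sloped, each has even dimension by Proposition~\ref{prop:slope-flat}, so $d$ itself is even. Fix any pair $\{\Phi_1,\Phi_2\}$ of alternating forms representing $\circ$, and consider $\Pf(\Phi_1,\Phi_2)(x,y)\in\mathbb{F}_q[x,y]$, a homogeneous polynomial of degree $d/2$.

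First, I would verify that $\Pf(\Phi_1,\Phi_2)$ is a nonzero polynomial. Choose a basis compatible with a fully-refined orthogonal decomposition so that $\Phi_i=\diag\bigl(\Phi_i^{(1)},\ldots,\Phi_i^{(s)}\bigr)$, as in Theorem~\ref{thm:det-method}, where each $\{\Phi_1^{(j)},\Phi_2^{(j)}\}$ is an indecomposable sloped pair. Because each summand is sloped, some linear combination of $\Phi_1^{(j)}$ and $\Phi_2^{(j)}$ is nondegenerate, so each factor $\Pf\bigl(\Phi_1^{(j)},\Phi_2^{(j)}\bigr)$ is a nonzero polynomial. The Pfaffian of the orthogonal sum multiplies, giving
\[
\Pf(\Phi_1,\Phi_2)=\prod_{j=1}^{s}\Pf\bigl(\Phi_1^{(j)},\Phi_2^{(j)}\bigr)\neq 0.
\]

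Next, the hypothesis that $\circ$ is non-sloped means that $\lambda\Phi_1+\mu\Phi_2$ is degenerate for every $(\lambda,\mu)\in\mathbb{F}_q^2\setminus\{(0,0)\}$. In the hyperbolic block form afforded by Lemma~\ref{lem:isotropics} one has $\det(\lambda\Phi_1+\mu\Phi_2)=\Pf(\Phi_1,\Phi_2)(\lambda,\mu)^{2}$, so $\Pf(\Phi_1,\Phi_2)$ must vanish at every point of $PG(1,\mathbb{F}_q)$.

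Finally, I would apply a standard degree bound: a nonzero homogeneous polynomial of degree $n$ in $\mathbb{F}_q[x,y]$ splits into at most $n$ linear factors over $\overline{\mathbb{F}_q}$, and thus has at most $n$ zeros in $PG(1,\mathbb{F}_q)$. Since $\Pf(\Phi_1,\Phi_2)$ has degree $d/2$ yet vanishes on all $q+1$ points of $PG(1,\mathbb{F}_q)$, we conclude $q+1\leq d/2$, and in particular $q<d$. No step of this argument presents a genuine obstacle; the main insight is simply that the generalized Pfaffian packages the $q+1$ independent degeneracy conditions of a non-sloped bimap into a single low-degree polynomial constraint.
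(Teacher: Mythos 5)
Your proof is correct and follows exactly the route the paper intends: the paper leaves this proposition as a scholium of the preceding lemma, whose proof already observes that a non-sloped pair forces $\Pf(\Phi_1,\Phi_2)$ to vanish on all of $PG(1,\mathbb{F}_q)$, while slopedness of each summand makes the degree-$d/2$ Pfaffian nonzero. Your count $q+1\leq d/2$ is in fact a sharper bound than the stated $q<d$.
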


%%%%
\subsection{Pfaffian isomorphism test for small fields}
\label{subsec:small}
Let $\circ,\bullet\colon \mathbb{F}_q^d\times\mathbb{F}_q^d\bmto
\mathbb{F}_q^2$ be two given bimaps whose indecomposable
summands are all sloped. Write each bimap relative to
a fully-refined orthogonal decomposition, as in Theorem~\ref{thm:det-method},
represented by a pair $\{\Phi_1,\Phi_2\}$ with
$\Phi_i=\diag\left(\Phi_i^{(1)},\dots,\Phi_i^{(s)}\right)$.  To each pair we associate a collection 
$\{\Pf(\Phi_1,\Phi_2)\colon 1\leq i\leq s\}$ of homogeneous polynomials.
Then, using Theorem~\ref{thm:det-method}, we can test whether
or not $\circ$ and $\bullet$ are pseudo-isometric by exhaustively checking
every element $\hat{\alpha}\in\GL(2,\mathbb{F}_q)$ to see
if it yields an equivalence between the two collections.
Moreover, Theorem~\ref{thm:det-method} is constructive: for suitable $\hat{\alpha}$
we can compute $\alpha\in\GL(d,\mathbb{F}_q)$ 
such that $(\alpha,\hat{\alpha})$ is a pseudo-isometry from $\circ$
to $\bullet$.

The complexity of the algorithm outlined above
contains an unavoidable factor of $|\PGL(2,\mathbb{F}_q)|$
for the exhaustive search. In practice it works well
when $q$ is small. We remark that, for a pair of bimaps
of the sort described in Proposition~\ref{coro:field-problem},
$q<d$, and we regard $d$ as small. Thus, we assume
in the remainder of this section
that alternating bimaps of genus 2 over large fields are sloped.

%%%%
\subsection{Adjoint-tensor isomorphism test for large fields}
\label{subsec:large}
The shortcut isomorphism test described in the preceding section, while very effective in practical settings, has an unavoidable
factor of $O(q^3)$ in its complexity. Hence,
the performance of this technique deteriorates quickly as the size of $q$ increases, as the graphic in Figure~\ref{fig:ATvP}
clearly demonstrates. We therefore adapt the adjoint-tensor method to this more general setting.
\medskip

\begin{figure}[!htbp]
\input{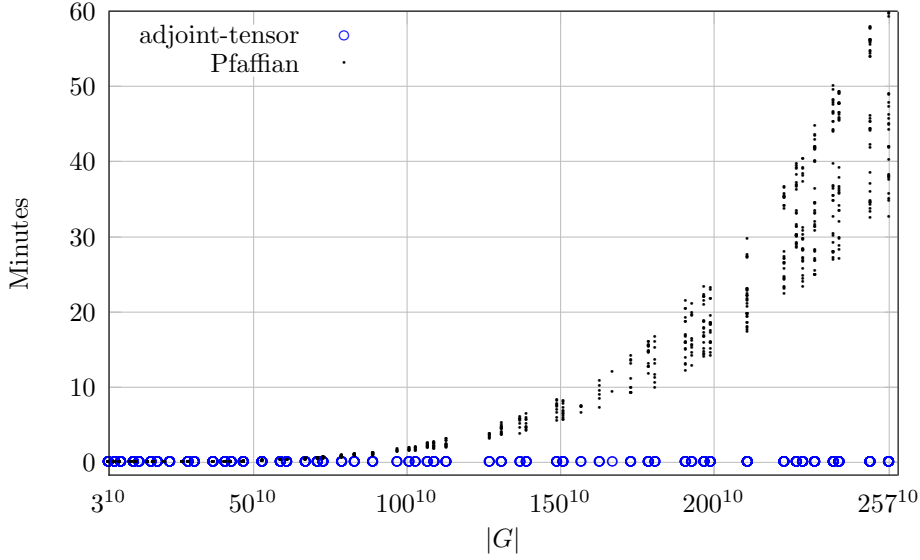}
\caption{Comparison of adjoint-tensor method with Pfaffian method for random $p$-groups of genus
2 as we let $p$ increase.}
\label{fig:ATvP}
\end{figure}
\medskip

The algorithm itself proceeds exactly as described in Section~\ref{subsec:sloped-indec}. 
Difficulties and complexity issues arise because the structure of 
$N^*(A)$ is more complex than in the indecomposable case, 
and because we handle that additional structure
by brute force. This gives rise to the rather less elegant complexity statement in 
Theorem~\ref{thm:bimap-alg}.
We now discuss all of the subtleties that arise in moving from indecomposable bimaps to 
general bimaps, and indicate
how we handle them.

The first subtlety occurs prior to the main algorithm, however. Recall, we have assumed that the 
indecomposable summands are all sloped. Our presentation of 
the adjoint-tensor method presumes that the given 
bimaps are sloped ``globally". This is the importance of Proposition~\ref{coro:field-problem}: 
if this happens not to be the case, then $q$ is small relative to $d$
and we use Section~\ref{subsec:small} instead.
\medskip

The crucial point suggested above is that, unlike the approach in 
Section~\ref{subsec:small}, we treat the input
bimaps globally (rather than working with indecomposable summands). The adjoint
algebras $A(\circ)$ and $A(\bullet)$ are still centralizers of single matrices,
and hence the conjugacy problem in Section~\ref{subsec:conj} goes through unchanged. 
\smallskip

Moving on to the tensor product $T=\Bbb{F}_q^d\wedge_A\Bbb{F}_q^d$, once again there is little
new to note. As observed in Remark~\ref{rem:general-slope}, Theorem~\ref{thm:tensor-bimap} applies to our new setting.
That is, if $\sigma=\Phi_2\Phi_1^{-1}$ is a slope of $\circ$, and $m(x)$ its minimal polynomial,
then $T\cong K=\Bbb{F}_q[x]/(m(x))$ as $\Bbb{F}_q$-modules, but now $K$ may have
multiple primary components and hence is not necessarily a local ring.
The proof of Theorem~\ref{thm:tensor-bimap}
is essentially the same except that we compute with each primary component separately.
\smallskip

We turn now to the structure of $\pseudo(\wedge_A)=N^*(A)$.
We refer, once again, to the general structure theorem in~\cite{BW:autotopism}*{Theorem 4.5}.

First, note that it's possible to have a subgroup of permutation matrices inside $N^*(A)$ arising from
the representation of $A$. More precisely, $N^*(A)$ permutes the isoptypic components
of the decomposition of $K$ into primary components. Recall that primary components
$V_i$ ($i=1,2$) are {\em isotypic} if they have minimal polynomial $p_i^n$, $p_i$
irreducible, with $\deg p_1=\deg p_2$ and where the components have identical Jordan block structures.
We denote this permutation subgroup of $N^*(A)$ by $\Pi$. It is possible, provided $q\geqslant \frac{d}{2}$,
for $|\Pi|$ to be as large as $\left(\frac{d}{2}\right)!$. 

Secondly, $K/J(K)$ is a product of fields
(as opposed to a single field). Therefore, the subgroup $\Gamma$ of
$\Gamma{\rm L}(1,K)$, which was previously a single Galois group, may
contain a direct product of Galois groups. Hence, $|\Gamma|$ may be
as large as $2^{\frac{d}{4}}$.
\smallskip

We now turn to the final step of the algorithm in Section~\ref{subsec:transporter}.
We proceed exactly as before, but instead of looping over $\Gamma$, we now
loop over $\Gamma\Pi$. Observe that both 
$|\Gamma|$ and $|\Pi|$ are bounded under the additional hypotheses 
of the last assertion in Theorem~\ref{thm:bimap-alg}, which is what yields polynomial time
in that case.

%%%%%%%%%%%%%
\subsection{The group of pseudo-isometries of a bimap of genus 2.}
\label{sec:pseudo-group}
Recall that our test for pseudo-isometry between given bimaps 
$\circ,\bullet\colon \mathbb{F}_q^d\times\mathbb{F}_q^d\bmto \mathbb{F}_q^2$
promises the set of {\em all} such pseudo-isometries (if such is needed).
It does so by additionally returning generators for the group
$\pseudo(\circ)$. In many applications of our methods, moreover,
this {\em is} the problem of interest. Thus we briefly describe how to
adapt the various tools for our pseudo-isometry test to solve
the following problem.
\bigskip

\begin{minipage}{0.9\textwidth}
\noindent {\sc PseudoIsometryGroup}$\;(~\circ~)$
\begin{description}
\item[Given] an alternating $\mathbb{F}_q$-bimap $\circ\colon\mathbb{F}_q^d\times\mathbb{F}_q^d\bmto \mathbb{F}_q^e$.
\item[Return]  (generators for) the group $\pseudo(\circ)$.
\end{description}
\end{minipage}
\bigskip

Again, we focus on genus 2, and
consider first the situation where $q$ is considered small, as in Section~\ref{subsec:small}.
Here, there is very little to be said. We proceed -- as though testing for pseudo-isometry between
$\circ$ and itself -- by listing all $\hat{\phi}\in\Gamma{\rm L}(2,\mathbb{F}_q)$ and testing
whether $\hat{\phi}$ lifts to a pseudo-isometry $(\phi,\hat{\phi})$ of $\circ$. When we have
exhausted the elements of $\Gamma{\rm L}(2,\mathbb{F}_q)$ we have the entire
group $\pseudo(\circ)/\isom(\circ)$.

Next, suppose that $q$ is large. Any pseudo-isometry of $\circ$ preserves a basic
decomposition of $\circ$ into its flat and sloped parts. 
We saw in Proposition~\ref{prop:pseudo-flat} that the pseudo-isometry group of
the flat part induces the full $\Gamma{\rm L}(2,\mathbb{F}_q)$ on $\mathbb{F}_q^2$,
and this result is constructive in that it provides a lift of any given $\hat{\phi}\in\Gamma{\rm L}(2,\mathbb{F}_q)$
to a pseudo-isometry of the flat part. Thus, in view of Section~\ref{sec:small-field}, 
it suffices to construct $\pseudo(\circ)$ when
$\circ\colon \mathbb{F}_q^d\times\mathbb{F}_q^d\bmto \mathbb{F}_q^2$ is sloped.

This, in fact, is somewhat easier than deciding pseudo-isometry because we need not concern ourselves 
with conjugating adjoint algebras. In fact, referring to the pseudo-code in Section~\ref{sec:adjten}, 
everything remains the same in an algorithm for {\sc PseudoIsometryGroup} until Line 5. 
Here, instead of seeking a single element $(\phi,\hat{\phi})\in\pseudo(\wedge_A)$ 
mapping $\ker\hat{\circ}$ to $\ker\hat{\star}$, we require the full stabilizer in $\pseudo(\wedge_A)$ of 
$\ker\hat{\circ}$, say. One solves such ``stabilizer'' problems using exactly the same machinery 
we used for the ``transporter" problems at no additional cost. 
\smallskip

In sum, we have proved the following.

\begin{thm}
\label{thm:pseudo-gp-alg}
There is a deterministic algorithm that, given an 
alternating bimap $\circ\colon \mathbb{F}_q^d\times\mathbb{F}_q^d\bmto
\mathbb{F}_q^2$ of genus 2, constructs generators for $\pseudo(\circ)$.
The algorithm is polynomial time if either $q$ is bounded, or
if the number of pairwise pseudo-isometric indecomposable summands
of the input bimaps is bounded.
\end{thm}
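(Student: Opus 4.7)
The plan is to mimic the architecture of the {\sc PseudoIsometry} algorithm developed in Sections~\ref{sec:iso-ind-genus2}--\ref{sec:iso-genus2}, but at every step where that algorithm seeks a \emph{single} transporter element, we instead solve the corresponding \emph{stabilizer} problem so as to accumulate generators for the full group $\pseudo(\circ)$.

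First I would dispose of the small-field case. If $q$ is bounded, loop over all $\hat{\phi}\in\GammaL(2,\mathbb{F}_q)$ and, for each $\hat{\phi}$, apply Theorem~\ref{thm:det-method} to the pair $(\circ,\circ)$ written relative to a fully-refined orthogonal decomposition (constructed via Theorem~\ref{thm:cent-hyp-alg}(i)) to decide whether $\hat{\phi}$ lifts to some $(\phi,\hat{\phi})\in\pseudo(\circ)$, and if so to construct such a $\phi$. Together with a generating set for $\isom(\circ)$ (computed as in \cite{BW:isom,BW:slope}), these lifts generate $\pseudo(\circ)$, using the exact sequence of Proposition~\ref{prop:autoclinism} projected onto $W$. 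The total cost is $O(|\PGL(2,\mathbb{F}_q)|)$ invocations of the Pfaffian test, which is polynomial when $q$ is bounded.

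For the large-field case, first isolate the flat and sloped parts of $\circ$ via a fully-refined orthogonal decomposition (again using Theorem~\ref{thm:cent-hyp-alg}(i)), then use Proposition~\ref{coro:field-problem} to assume $\circ$ is globally sloped after replacing the basis if necessary. Proposition~\ref{prop:pseudo-flat} supplies an explicit section $\GammaL(2,\mathbb{F}_q)\to\pseudo(\circ|_{\text{flat}})$, so it suffices to compute generators for the kernel of $\pseudo(\circ)\to\GammaL(2,\mathbb{F}_q)$ together with generators of $\pseudo$ of the sloped summand. For the sloped summand I would run through steps 1, 3 and 4 of the adjoint-tensor procedure in Figure~\ref{fig:code} exactly as in Section~\ref{subsec:sloped-indec} (there is no need for step 2, conjugating adjoint algebras, since we test $\circ$ against itself). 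The resulting generating set for $\pseudo(\wedge_A)=N^{*}(A)$ is obtained from the factorization $QG_1G_2\rtimes(\Pi\Gamma)$ described after equation~\eqref{eq:pseudo-tensor} and in Section~\ref{subsec:large}.

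At step 5, replace the transporter problem with the corresponding stabilizer problem: compute generators for $\{(\phi,\hat{\phi})\in\pseudo(\wedge_A)\colon (\ker\hat{\circ})\hat{\phi}=\ker\hat{\circ}\}$. This decomposes along the factorization $QG_1G_2$: use Lemma~\ref{lem:Ronyai}, applied as the stabilizer of $\ker\hat{\circ}\pmod{J}$ under $L_1^{\times}$ and of $\ker\hat{\circ}\cdot J\pmod{J^2}$ under $L_2^{\times}$, to produce generators of the stabilizers in $G_1$ and $G_2$; and invoke the unipotent stabilizer routine underlying Lemma~\ref{lem:unipotent} (i.e.\ \cite{Luks:mat}*{Theorem 3.2(7)}) to obtain generators for the stabilizer inside $Q$. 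Loop the whole stabilizer computation over $\Pi\Gamma$ by coset enumeration, pulling back each successful coset representative together with the kernel generators just found. The pullback of these generators via step 3 yields generators for $\pseudo(\circ|_{\text{sloped}})$. Combining with the flat-part section completes the construction.

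The main obstacle is the same one that appears in the proof of Theorem~\ref{thm:bimap-alg}: the factor $|\Pi\Gamma|$ coming from Galois symmetries and permutations of isotypic primary components can, in principle, be super-polynomial in $d$, since $|\Pi|$ can reach $(d/2)!$ and $|\Gamma|$ can reach $2^{d/4}$. Under the hypothesis that the number of pairwise pseudo-isometric indecomposable summands is bounded, however, $|\Pi|$ is bounded by a constant, and $|\Gamma|$ is polynomially bounded because each indecomposable sloped summand contributes a Galois group of size at most $\log_p q^{d/2}$. Each of the other steps -- computing $A(\circ)$, building $\mathbb{F}_q^d\wedge_A\mathbb{F}_q^d$, and running the three stabilizer subroutines -- is polynomial-time in $d$ and $\log q$, so the total cost is polynomial under either hypothesis of the theorem. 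Determinism follows because $p$ is bounded in that regime, so the polynomial factorization used to construct $K$ is itself deterministic.
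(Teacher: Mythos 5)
Your proposal follows essentially the same route as the paper: for bounded $q$, exhaust $\GammaL(2,\mathbb{F}_q)$ and lift via the Pfaffian criterion of Theorem~\ref{thm:det-method}; for large $q$, split off the flat part using Proposition~\ref{prop:pseudo-flat}, run the adjoint-tensor procedure on the sloped part with step 2 omitted, and replace the transporter computation in step 5 by the corresponding stabilizer computations along the factorization $QG_1G_2$ with a loop over $\Pi\Gamma$. Your write-up is, if anything, slightly more explicit than the paper's about how the stabilizer subroutines decompose, and the complexity analysis matches.
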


%=============================================================================================

%============================================================================================

\section{Group Isomorphism and Automorphism}
\label{sec:iso-auto}
Finally, we are able to tie together our work thus
far and present isomorphism tests for groups.
We shall work within the framework of the following
two problems.

\bigskip
\begin{minipage}{0.9\textwidth} % using minipage avoids it splitting across pages
\noindent  {\sc Isoclinism}$_p\;(~G~,~H~)$
\begin{description}
\item[Given] finite $p$-groups $G$ and $H$ of $p$-class 2.
\item[Return] an isoclinism $G\to H$, if such exists.
\end{description}
\end{minipage}
\bigskip

By Proposition~\ref{prop:isoclinic-isomorphic} this implies solving the following special case.

\bigskip
\begin{minipage}{0.9\textwidth}
\noindent  {\sc Isomorphism}$_p\;(~G~,~H~)$ %technical point p is constant.
\begin{description} % makes the indentation line up in width with itemize and enum, looks more cohesive (to james at least)
\item[Given] finite $p$-groups $G$ and $H$ of class 2 and exponent $p$.
\item[Return] an isomorphism $G\to H$, if such exists.
\end{description}
\end{minipage}
\bigskip

As with pseudo-isometries between bimaps, 
we can in principle produce {\em all} isoclinisms 
(or isomorphisms) between
$G$ and $H$ 
by returning the group
of all such for $G$. Hence, we
are also concerned with the following problems.

\bigskip
\begin{minipage}{0.9\textwidth} % using minipage avoids it splitting across pages
\noindent  {\sc AutoclinismGroup}$_p\;(~G~)$
\begin{description}
\item[Given] a finite $p$-group $G$ of $p$-class 2.
\item[Return] (generators for) the group of all autoclinisms of $G$.
\end{description}
\end{minipage}
\bigskip

\begin{minipage}{0.9\textwidth}
\noindent  {\sc AutomorphismGroup}$_p\;(~G~)$ %technical point p is constant.
\begin{description} % makes the indentation line up in width with itemize and enum, looks more cohesive (to james at least)
\item[Given] a finite $p$-group $G$ of class 2 and exponent $p$.
\item[Return] (generators for) the group of all automorphisms of $G$.
\end{description}
\end{minipage}
\bigskip

Recall that groups are given in a succinct representation
by generating sets or specialized presentations.
Let $G$ and $H$ denote the given $p$-groups of class 2.
Using Proposition~\ref{prop:basic-algs} we find generators for $Z(G)$ and $G'$,
and construct systems of forms over $\mathbb{Z}_p$ representing
$\circ_G$ and $\circ_H$. Theorem~\ref{thm:baer} 
states that $G$ and $H$ are isoclinic if, and only if,
$\circ_G$ and $\circ_H$ are pseudo-isometric.
\smallskip

Next, compute the centroid, $C(\circ_G)$, 
of $\circ_G$ as the solution of a system of linear equations.
In the same way compute $C(\circ_H)$. Evidently, $\circ_G$
and $\circ_H$ are pseudo-isometric only if the centroid rings
are isomorphic and are represented identically on the Frattini 
quotients of their associated groups. Recall that the centroid
may be used to obtain a decomposition of a group into direct 
factors~\cite{Wilson:RemakI}, so we may assume that the groups
are directly indecomposable.
Using~\cite{BO}*{Section 2.2}, compute $J(C(\circ_G))$ and
write $C(\circ_G)/J(C(\circ_G))$ as a field extension, $\mathbb{F}_q$, of $\mathbb{Z}_p$.
Do the same for $C(\circ_H)$. Thus, we may assume that 
$C(\circ_H)/J(C(\circ_H))\cong \mathbb{F}_q$, and hence
that we have two bimaps 
$\circ_G,\circ_H\colon \mathbb{F}_q^d\times\mathbb{F}_q^d\bmto\mathbb{F}_q^e$.
\smallskip

To solve {\sc Isoclinism} for $G$ and $H$, we must solve
{\sc PseudoIsometry} for $\circ_G$ and $\circ_H$.
The results in 
Sections~\ref{sec:iso-ind-genus2} and~\ref{sec:iso-genus2} apply, of course,
to the case where $e=2$, so we translate them now into the 
language of groups. The following isoclinism test for centrally
indecomposable groups of genus 2 follows immediately
from Theorem~\ref{thm:indecomp-bimap-alg}.

\begin{thm}
\label{thm:indecomp-gp-alg}
There is a deterministic, polynomial-time algorithm that, given centrally indecomposable
$p$-groups $G$ and $H$ of genus 2, decides whether $G$ and $H$ are isoclinic and, if so, 
constructs an isoclinism $G\to H$.  
\end{thm}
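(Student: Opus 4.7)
The plan is to translate the group-theoretic problem into the bimap-theoretic one via the Baer correspondence, invoke Theorem~\ref{thm:indecomp-bimap-alg} to produce a pseudo-isometry, and then lift that pseudo-isometry back to an isoclinism of groups.

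First, using Proposition~\ref{prop:basic-algs}, I would compute generators for $Z(G)$ and $G'$ (and similarly for $H$), and from these assemble systems of forms over $\mathbb{Z}_p$ realizing the commutation bimaps $\circ_G$ and $\circ_H$. By Theorem~\ref{thm:baer}, $G$ and $H$ are isoclinic if and only if these bimaps are pseudo-isometric. Next I would compute the centroids $C(\circ_G)$ and $C(\circ_H)$ as solutions of the linear systems defining them in Section~\ref{subsec:bimaps}. As $G$ and $H$ are centrally indecomposable, Theorem~\ref{thm:centroid-direct} tells us these centroids are local rings; their residue fields must agree (otherwise we return false), so after identifying both with a common extension $\mathbb{F}_q$ of $\mathbb{Z}_p$, we may view the commutation bimaps as $\mathbb{F}_q$-bilinear. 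Because both groups have genus $2$, $\dim_{\mathbb{F}_q} G' = \dim_{\mathbb{F}_q} H' = 2$, so we obtain indecomposable alternating bimaps
\[
\circ_G,\circ_H \colon \mathbb{F}_q^d \times \mathbb{F}_q^d \bmto \mathbb{F}_q^2
\]
of genus $2$, with the indecomposability of each bimap following from Lemma~\ref{lem:selfadj-idemp}(i).

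I would then invoke Theorem~\ref{thm:indecomp-bimap-alg} directly: it either confirms that no pseudo-isometry exists (so the groups are not isoclinic, and we return \texttt{false}) or produces an explicit pair $(\phi,\hat{\phi})\in\Gamma{\rm L}(d,\mathbb{F}_q)\times\Gamma{\rm L}(2,\mathbb{F}_q)$ satisfying $u\phi\circ_H v\phi = (u\circ_G v)\hat{\phi}$. Restricting scalars back to $\mathbb{Z}_p$, this pair is precisely isoclinism data: $\phi$ descends to a homomorphism $f\colon G/Z(G)\to H/Z(H)$ under the chosen bases, $\hat{\phi}$ descends to $\hat{f}\colon G'\to H'$, and the pseudo-isometry identity is exactly the compatibility condition that makes $(f,\hat{f})$ an isoclinism.

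The main technical nuance --- and the reason this theorem is stated as deterministic, whereas Theorem~\ref{thm:indecomp-bimap-alg} is only Las Vegas in general --- lies in polynomial factorization over $\mathbb{F}_q$, which is used inside Theorem~\ref{thm:indecomp-bimap-alg} to analyze the structure of the adjoint ring (notably to isolate the primary component of the slope's minimal polynomial). In our setting the groups are presented as quotients of permutation groups on $n$ letters, so the characteristic $p$ is bounded by $n$; the standard derandomization of finite field factorization when the characteristic is polynomially bounded (see Section~\ref{sec:models} and~\cite{vzG}) then removes all randomness while preserving polynomial running time.
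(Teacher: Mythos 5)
Your proposal is correct and follows essentially the same route as the paper, which derives this theorem directly from Theorem~\ref{thm:indecomp-bimap-alg} after the reduction via Proposition~\ref{prop:basic-algs}, Theorem~\ref{thm:baer}, and the centroid computation described at the start of Section~\ref{sec:iso-auto}. Your explicit justification of determinism---that $p$ is bounded by the input length in the permutation-quotient model, so polynomial factorization derandomizes---is a point the paper makes only in passing, and you have placed it correctly.
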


The following extension to arbitrary groups of genus 2 follows from Theorem~\ref{thm:bimap-alg}.

\begin{thm}
\label{thm:gp-alg}
There is a deterministic algorithm that, given $p$-groups $G$ and $H$ of genus 2, 
decides whether $G$ and $H$ are isoclinic and, if so, constructs an isoclinism $G\to H$. 
The algorithm is polynomial time if either $|C(\circ_G)/J(C(\circ_G))|$ is bounded, or
if the number of pairwise isoclinic centrally indecomposable factors of $G$ and $H$ is bounded.
\end{thm}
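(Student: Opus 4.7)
The plan is to reduce the group-theoretic problem to the bimap problem already solved in Theorem~\ref{thm:bimap-alg}, and then lift the resulting pseudo-isometry back to an isoclinism via the Baer correspondence. First, given $G$ and $H$ as quotients of permutation groups, I would apply Proposition~\ref{prop:basic-algs} to produce, in polynomial time, generators for $Z(G)$, $Z(H)$, $G'$, $H'$, together with systems of forms representing the commutation bimaps $\circ_G\colon G/Z(G)\times G/Z(G)\bmto G'$ and $\circ_H\colon H/Z(H)\times H/Z(H)\bmto H'$ over $\mathbb{Z}_p$. By Theorem~\ref{thm:baer}, $G$ and $H$ are isoclinic if and only if $\circ_G$ and $\circ_H$ are pseudo-isometric, so it suffices to solve {\sc PseudoIsometry} for this pair.

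Next, I would compute the centroids $C(\circ_G)$ and $C(\circ_H)$ by solving the defining linear systems. By Theorem~\ref{thm:centroid-direct}, any nontrivial decomposition of these centroids as a direct product of subrings corresponds to a central decomposition of the group as a direct product, so a necessary condition for isoclinism is that the centroids match as abstract rings; if they differ, we return {\tt false}. Otherwise, since $G$ has genus 2, the residue field $C(\circ_G)/J(C(\circ_G))$ is isomorphic to some $\mathbb{F}_q$ (and similarly for $H$), and by viewing $G'$ as a $C(\circ_G)/J(C(\circ_G))$-module we may re-present the data as $\mathbb{F}_q$-bimaps $\circ_G,\circ_H\colon \mathbb{F}_q^d\times\mathbb{F}_q^d\bmto \mathbb{F}_q^2$. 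All of this follows the blueprint already sketched in the paragraphs preceding Theorem~\ref{thm:indecomp-gp-alg}, and uses only linear algebra plus the centroid-decomposition machinery from~\cite{BO,Wilson:RemakI}.

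Once we are in the bimap setting, Theorem~\ref{thm:bimap-alg} furnishes an algorithm that either decides no pseudo-isometry exists or returns $(\phi,\hat{\phi})\in \GL(d,\mathbb{F}_q)\times \GL(2,\mathbb{F}_q)$ with $u\phi\circ_H v\phi = (u\circ_G v)\hat{\phi}$. The pair $(\phi,\hat{\phi})$ then defines, by construction, an isoclinism $G\to H$ in the sense of Section~2: $\phi$ is a homomorphism $G/Z(G)\to H/Z(H)$, $\hat{\phi}$ is a homomorphism $G'\to H'$, and the compatibility with the commutator word map is exactly the pseudo-isometry condition. Conversely, any isoclinism induces a pseudo-isometry between the bimaps, so the procedure is complete and correct.

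For complexity, every step outside the call to Theorem~\ref{thm:bimap-alg} is polynomial time in the input size, using Proposition~\ref{prop:basic-algs} and standard linear-algebra routines; the only potentially expensive step is the bimap pseudo-isometry test itself. Its cost is polynomial precisely when $q=|C(\circ_G)/J(C(\circ_G))|$ is bounded or when the number of pairwise pseudo-isometric indecomposable summands of $\circ_G$ is bounded. Since the multiset of centrally indecomposable factors of $G$ corresponds (via Theorem~\ref{thm:centroid-direct} and Theorem~\ref{thm:transitive}) to the orthogonal decomposition of $\circ_G$, these two hypotheses match exactly the hypotheses in the theorem statement. The main technical point to verify is this final correspondence between central decompositions of $G$ and orthogonal decompositions of $\circ_G$; fortunately, Lemma~\ref{lem:selfadj-idemp}(i) and Theorem~\ref{thm:cent-hyp-alg}(i) already supply the algorithmic content, so no new obstacle arises, and the theorem follows.
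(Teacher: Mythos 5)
Your proposal is correct and follows essentially the same route as the paper: reduce to bimaps via Proposition~\ref{prop:basic-algs} and the Baer correspondence (Theorem~\ref{thm:baer}), normalize the centroid to pass to $\mathbb{F}_q$-bimaps, invoke Theorem~\ref{thm:bimap-alg}, and observe that its two complexity hypotheses translate exactly (via Lemma~\ref{lem:selfadj-idemp}(i) and Theorem~\ref{thm:transitive}) into the two hypotheses of the theorem. The only nuance worth noting is that matching centroids ``as abstract rings'' is slightly weaker than the paper's requirement that they be \emph{identically represented} on the Frattini quotients, but this does not affect the substance of the argument.
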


We get our main theorem -- which, for convenience, we restate -- 
as a consequence of this result.

\begin{cor}[Theorem~\ref{thm:main1}]
There is a deterministic, polynomial-time algorithm that, given groups 
$G$ and $H$ as quotients of permutation groups, decides if $G$ is a $p$-group of 
exponent $p$-class $2$ with
commutator subgroup of order dividing $p^2$, and if so, decides if $G$ and $H$ are isoclinic.  
\end{cor}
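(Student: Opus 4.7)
The plan is to derive the corollary directly from Theorem~\ref{thm:gp-alg} by (a) verifying the hypotheses on $G$ can be checked and $\circ_G$ extracted, in polynomial time, from the permutation-quotient data, and (b) showing that the constraint $|G'| \mid p^2$ forces the hypotheses of Theorem~\ref{thm:gp-alg} to apply with its polynomial-time guarantee.

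First, I would apply Proposition~\ref{prop:basic-algs} to both $G$ and $H$: compute $|G|$, $Z(G)$ and $G'$; test whether $G$ is a $p$-group of nilpotency class $2$ with $G/Z(G)$ of exponent $p$ (i.e.\ exponent-$p$-class $2$) and $|G'| \mid p^2$; and, in the affirmative case, build a system of forms representing $\circ_G \colon G/Z(G) \times G/Z(G) \bmto G'$ over $\mathbb{Z}_p$. Each step is polynomial in the input size $\Theta((|X|+|R|)n)$. If $G$ fails the hypothesis tests we are done; if $G$ passes but $H$ fails, then $G$ and $H$ cannot be isoclinic and we return false.

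Next, I would observe that the hypothesis $|G'| \mid p^2$ forces $G$ to have genus at most $2$ in the sense of Definition~\ref{def:genus}. For any decomposition of $G$ into directly indecomposable factors $H_1, \ldots, H_s$ (available via the centroid by Theorem~\ref{thm:centroid-direct}), each commutator $[H_i, H_i]$ is a subgroup of $G'$, so has $\mathbb{Z}_p$-rank at most $2$; its rank as a $C(\circ_{H_i})$-module is then at most $2$ as well. Hence both $G$ and $H$ fall within the scope of Theorem~\ref{thm:gp-alg}, and via Theorem~\ref{thm:baer} deciding isoclinism reduces to deciding pseudo-isometry of $\circ_G$ and $\circ_H$.

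Finally, I would invoke Theorem~\ref{thm:gp-alg} to decide pseudo-isometry and construct one when present, then transfer the resulting pseudo-isometry back to an isoclinism of $G$ and $H$ using Theorem~\ref{thm:baer}. The main technical point is to check that the polynomial-time branch of Theorem~\ref{thm:gp-alg} truly applies: in the permutation-quotient model, every prime dividing $|G|$ is bounded by the number of permuted letters $n$, and hence polynomially by the input length; the residue field $\mathbb{F}_q := C(\circ_G)/J(C(\circ_G))$ acts faithfully on the $\mathbb{Z}_p$-module $G'$ of order at most $p^2$, so $|\mathbb{F}_q| \leq p^2$, again polynomially bounded. This is precisely the ``bounded $|C(\circ_G)/J(C(\circ_G))|$'' case of Theorem~\ref{thm:gp-alg}, which delivers the desired polynomial-time algorithm. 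The main obstacle is not structural (the genus bound is essentially immediate) but rather this complexity bookkeeping, i.e.\ confirming that the parameters controlling the inner loops of the adjoint-tensor and Pfaffian subroutines — the residue field size and the number of repeated centrally indecomposable factors — remain polynomial in the input length once $|G'| \leq p^2$ and the permutation-quotient model are imposed.
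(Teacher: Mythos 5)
Your overall architecture matches the paper's: check the hypotheses and extract $\circ_G$ via Proposition~\ref{prop:basic-algs}, observe that $|G'|\mid p^2$ bounds the genus, hand the problem to Theorem~\ref{thm:gp-alg}, and close the complexity bookkeeping by noting that in the permutation-quotient model $p$ (hence the residue field of the centroid) is polynomially bounded by the input length. That last observation is exactly the one the paper makes, and it is the right way to see that the ``bounded $|C(\circ_G)/J(C(\circ_G))|$'' branch of Theorem~\ref{thm:gp-alg} delivers polynomial time.

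There is, however, a genuine gap in the step ``hence both $G$ and $H$ fall within the scope of Theorem~\ref{thm:gp-alg}.'' You correctly note that the genus is \emph{at most} $2$, but Theorem~\ref{thm:gp-alg} is stated and proved only for groups of genus exactly $2$, i.e.\ for the case where, after rewriting $\circ_G$ over the residue field $\mathbb{F}_q$ of the centroid, one obtains a bimap into $\mathbb{F}_q^2$. When the genus is $1$ this rewriting produces a \emph{single} alternating form ($e=1$), and the entire genus-$2$ machinery (Kronecker pairs, Pfaffians, the adjoint-tensor method, and the transitivity and classification theorems they rest on) is not applicable as stated. This case really occurs under the corollary's hypotheses: $G'\cong\mathbb{Z}_p$ gives genus $1$ over $\mathbb{Z}_p$, and $G'\cong\mathbb{Z}_p^2$ with a two-dimensional local centroid ($\mathbb{F}_{p^2}$ or $\mathbb{Z}_p[x]/(x^2)$) gives genus $1$ over that ring --- e.g.\ Heisenberg groups over $\mathbb{F}_{p^2}$. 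The paper's proof therefore splits on the centroid: if $C(\circ_G)=\mathbb{Z}_p$ the group has genus $2$ and Theorem~\ref{thm:gp-alg} applies; otherwise the group has genus $1$ over its (local) centroid, $\circ_G$ is a single nondegenerate alternating form over that ring, and isoclinism is decided by the classical fact that such a form admits a symplectic basis unique up to pseudo-isometry. You need to add this case analysis (or otherwise justify that the genus-$2$ algorithms remain correct for genus-$1$ input, which the cited theorems do not provide).
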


\begin{proof}
A group of exponent $p$-class 2 and central commutator of order dividing $p^2$  
has genus at most $2$ over its centroid. 
The possible centroids are the $1$ and $2$-dimensional $\mathbb{Z}_p$-algebras: $\mathbb{Z}_p$, 
$\mathbb{Z}_p\oplus \mathbb{Z}_p\cong \mathbb{Z}_p[x]/(x(x-1))$, 
$\mathbb{F}_{p^2}\cong\mathbb{Z}_p[x]/(x^2+ax+b)$, or $\mathbb{Z}_p[x]/(x^2)$.

If  the centroid is $\mathbb{Z}_p$ then $G$ has genus $2$ and we apply Theorem~\ref{thm:gp-alg}.
In the other cases $G$ has genus $1$ over the centroid and so the associated bimap $\circ_G$ is represented
by a single nondegenerate form over the centroid.  For any $p$-group represented as
a permutation group, $p$ is bounded by the input length,
so the resulting algorithm is polynomial time in this setting.
In all other cases $\circ_G$ is an alternating nondegenerate form over a local ring with a
symplectic basis that is unique up to pseudo-isometry
(see, for example,~\cite{Milnor-Husemoller}*{Chapter I, Corollary 3.5}).
\end{proof}

In all of these results, if the input groups additionally have exponent $p$,
then ``isoclinism" may be replaced with ``isomorphism".
\medskip

We now turn to {\sc AutoclinismGroup} and {\sc AutomorphismGroup}. 
Let $G$ be a group of exponent $p$-class 2, and $\circ_G$ its associated
alternating bimap. We may again assume that $G$ is directly indecomposable,
and hence write $\circ_G\colon \mathbb{F}_q^d\times\mathbb{F}_q^d\bmto \mathbb{F}_q^e$,
where $C(\circ_G)/J(C(\circ_G))\cong\mathbb{F}_q$.
Then, by definition, the autoclinism group coincides with $\pseudo(\circ_G)$, so  
the results in Section~\ref{sec:pseudo-group} solve {\sc AutoclinismGroup}.  
In the case of a group of exponent $p$,
Proposition~\ref{prop:autoclinism}, gives an exact sequence
\begin{align*}
\xymatrix{
	1 \ar[r] &  C_{\Aut(G)}(Z(G)) \ar[r]^{\iota} & 	{\rm Aut}(G)\ar[r]^{\pi} & \pseudo(\circ_G).
}
\end{align*}
Generators for the $p$-group $C_{\Aut(G)}(Z(G))$ are defined by linear equations.
If $G$ is genus 2, then generators for $\pseudo(\circ_G)$ are
obtained in Section~\ref{sec:pseudo-group}. Hence, we have proved the following.

\begin{thm}
There is a deterministic algorithm that, given a
$p$-group $G$ 
of genus 2, 
constructs generators for the group of autoclinisms of $G$.
The algorithm is polynomial time if either $|C(\circ_G)/J(C(\circ_G))|$ is bounded, or
if the number of pairwise isoclinic centrally indecomposable factors
of $G$ is bounded. Again, ``autoclinism" may be replaced with ``automorphism" if $G$ has exponent $p$.
\end{thm}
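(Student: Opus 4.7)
The plan is to reduce the group-theoretic problem to the bimap-theoretic one we have already solved, then handle the ``kernel" of that reduction separately. First, using Proposition~\ref{prop:basic-algs} I would compute generators for $Z(G)$ and $G'$, produce a system of forms representing $\circ_G$ over $\mathbb{Z}_p$, and then compute the centroid $C(\circ_G)$ as the solution of a linear system. Via the structure of $C(\circ_G)$ together with Theorem~\ref{thm:centroid-direct}, I would reduce to the case that $G$ is directly indecomposable so that $C=C(\circ_G)$ is local; writing $C/J(C)\cong\mathbb{F}_q$ (computed as in~\cite{BO}*{Section 2.2}), $\circ_G$ becomes an alternating $\mathbb{F}_q$-bimap of genus $2$ of the form $\mathbb{F}_q^d\times\mathbb{F}_q^d\bmto\mathbb{F}_q^e$ with $e\leq 2$.

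Next, by the remark following the proof of Proposition~\ref{prop:autoclinism}, the group of autoclinisms of $G$ is precisely $\pseudo(\circ_G)$. Hence generators for the autoclinism group are obtained directly from Theorem~\ref{thm:pseudo-gp-alg}, and the complexity statement in that theorem transfers: the algorithm is polynomial time whenever $q=|C(\circ_G)/J(C(\circ_G))|$ is bounded, or when the number of pairwise pseudo-isometric indecomposable summands of $\circ_G$ is bounded. The latter condition on bimaps corresponds, via Theorem~\ref{thm:centroid-direct} together with the equivalence of central decompositions and fully-refined orthogonal decompositions of $\circ_G$, exactly to the number of pairwise isoclinic centrally indecomposable factors of $G$ being bounded.

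For the exponent-$p$ addendum, I would use the exact sequence
\[
1 \longrightarrow C_{\Aut(G)}(Z(G)) \longrightarrow \Aut(G) \xrightarrow{\pi} \pseudo(\circ_G) \longrightarrow 1
\]
from Proposition~\ref{prop:autoclinism}, noting that $\pi$ is surjective when $G$ has exponent $p$. To lift generators of $\pseudo(\circ_G)$ to $\Aut(G)$ I would, for each generator $(\phi,\hat\phi)$, solve a linear system over $\mathbb{F}_p$ expressing the condition that a map $x_i\mapsto x_i^{\phi}\cdot z_i$ (with $z_i\in Z(G)$) defines a homomorphism, using the structure constants of $\circ_G$. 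Finally I would compute generators for the kernel $C_{\Aut(G)}(Z(G))$: this is an abelian (hence $p$-group) subgroup defined by the linear condition that an element of $\hom(G/G',Z(G))$ vanish on $Z(G)/G'$ and commute with the action on $\circ_G$, so its generators are again obtained from a linear system.

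The main obstacle is not the conceptual reduction, which is essentially bookkeeping on top of Theorem~\ref{thm:pseudo-gp-alg}, but rather ensuring that the lift $\pseudo(\circ_G)\to\Aut(G)$ is carried out carefully enough that the asserted complexity bounds survive the reduction. In particular, one must verify that the linear systems defining both the lifts of generators and the central automorphism subgroup $C_{\Aut(G)}(Z(G))$ have size polynomial in the group-theoretic input length, and that no hidden blow-up occurs when one passes between the bimap-generator counts and the group-generator counts under the two boundedness hypotheses.
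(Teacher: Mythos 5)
Your proposal is correct and follows essentially the same route as the paper: reduce via the centroid to a directly indecomposable group, identify the autoclinism group with $\pseudo(\circ_G)$ and invoke Theorem~\ref{thm:pseudo-gp-alg}, and in the exponent-$p$ case use the exact sequence of Proposition~\ref{prop:autoclinism} together with linear algebra to obtain $C_{\Aut(G)}(Z(G))$ and lift generators. The extra detail you supply on lifting generators and matching the two boundedness hypotheses is sound bookkeeping that the paper leaves implicit.
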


%=============================================================================================

\section{Implementation and Performance}
\label{sec:imp}

As mentioned in Section~\ref{sec:imp-intro}
we have implemented the algorithms
presented in Sections~\ref{sec:adjten}--\ref{sec:iso-auto} in the 
computer algebra system~{\sc magma}.
The code is available upon request from the authors.
Our implementation makes essential use of the ``StarAlgebra" 
package implemented by the first and third authors \cite{BW:isom}.
Although there are  
areas where performance can be improved, the plots in
Figures~\ref{fig:plot} and~\ref{fig:ATvP} 
illustrate the efficacy of our implementation.
All tests were carried out
on an Intel Xeon E5-1620, 3.60 GHz processor, running {\sc magma} V2.21--4.

We now comment briefly on aspects of the experiment whose
results are depicted in Figure~\ref{fig:plot} (henceforth referred to as Experiment A), 
and also on  further experiments designed to probe the behavior of the implementation 
when provided with input groups that, for one reason or another, are not likely to be isomorphic. 
\medskip

In Experiment A, we constructed random pairs of groups of genus 2 as follows. First, 
for fixed $d$, we generated 
a pair $\{\Phi_1,\Phi_2\}$ of skew-symmetric $d\times d$ matrices
with entries in $\Bbb{F}_5$. 
Next, 
we built a 5-group, $G$, of genus 2 as a PC-group with commutator relations determined by
the entries of $\{\Phi_1,\Phi_2\}$.
We then chose a random element
$g\in\GL(d,\Bbb{F}_5)$ and $h\in\GL(2,\mathbb{F}_5)$, computed $\{\Psi_1,\Psi_2\}=\{g\Phi_1g^{{\rm tr}},g\Phi_2g^{{\rm tr}}\}^h$, and used these
matrices to define another PC-group, $H$ which, by construction, is isomorphic to $G$. 
Finally, we used our implementation to test for isomorphism between $G$ and $H$.
The test was repeated 10 times for each even $d$ between 4 and 254, and twice for each odd
$d$ between 3 and 135, for a total 1260 tests. Necessarily all tests for $d$ odd produced
flat groups; by construction all of the groups for even $d$ were sloped. 
\smallskip

We have shown that the asymptotic complexity of our algorithms is $O(d^{2\omega})$,
the complexity of solving systems of linear equations in $(1+o(1))d^2$ equations and variables, we call this ``$d^2$ linear algebra''. 
We mentioned in Section 1
that Figure~\ref{fig:plot} suggests our runtimes agree with theory in the case of sloped input,
and claimed that the same is true for flat groups. Considering each input type (sloped and flat) 
separately we computed the logarithm of the completion times for each test, and formed the ratio
with the logarithm of the completion time for a random $d^2$ linear algebra.
The results recorded in Figure~\ref{fig:logplot} show that indeed both trend towards 1
and so track with $d^2$ linear algebra.  The flat case trends much more slowly
to $1$ than does the sloped case leaving room for future improvement.
\medskip

\begin{figure}[!htbp]
\input{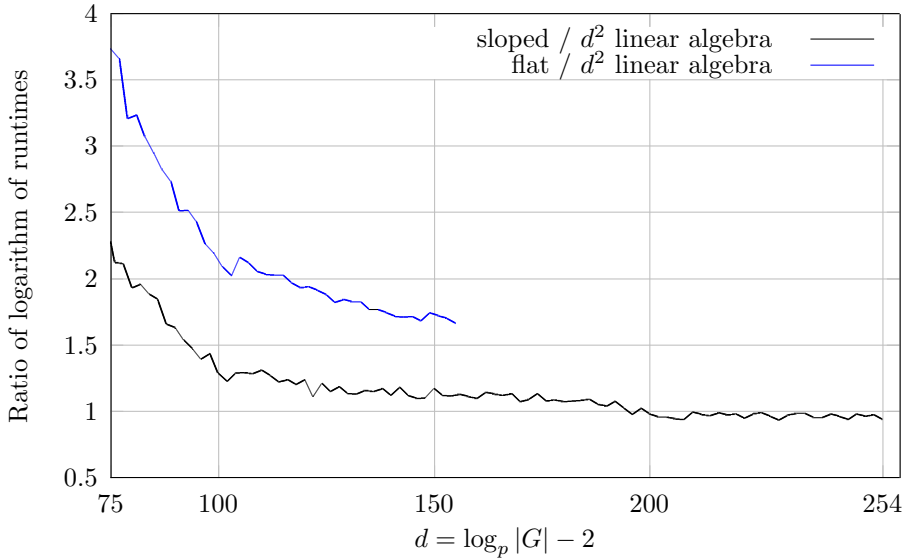}
\caption{Graph showing the ratio of the logarithm of our runtimes against the logarithm of
the time needed to solve $d^2\times d^2$ linear systems as $d=\log_p|G|-2$ increases.}
\label{fig:logplot}
\end{figure}

We comment briefly on the rather large variance in runtimes 
one clearly sees from the spikes in Figure~\ref{fig:plot} for the sloped
case (in contrast to the smooth graph for the flat groups).
An essential component for both types of groups is the construction
and manipulation of the adjoint algebra. In the case of a sloped group, $G$,
one constructs $A(\circ_G)$ very quickly using the methods of~\cite{BW:slope}.
For these groups, the completion time is affected if the Jacobson
radical of $A(\circ_G)$ is nontrivial, or if the natural module for $A(\circ_G)$
decomposes into many blocks. Information about the number of 
blocks and sizes of the largest blocks for the groups in our experiment are given
in Figures~\ref{fig:blockstructure} and~\ref{fig:blockstructure2}, respectively.
As one can see, for sloped groups there is considerable variability in the block structure.
With flat groups, on the other hand, there is usually just
a single indecomposable block, and the runtime is always dominated by the 
construction of the adjoint algebra.
This accounts for smooth graph for flat groups and spiky graph for sloped groups.
\medskip

\begin{figure}[!htbp]
\includegraphics{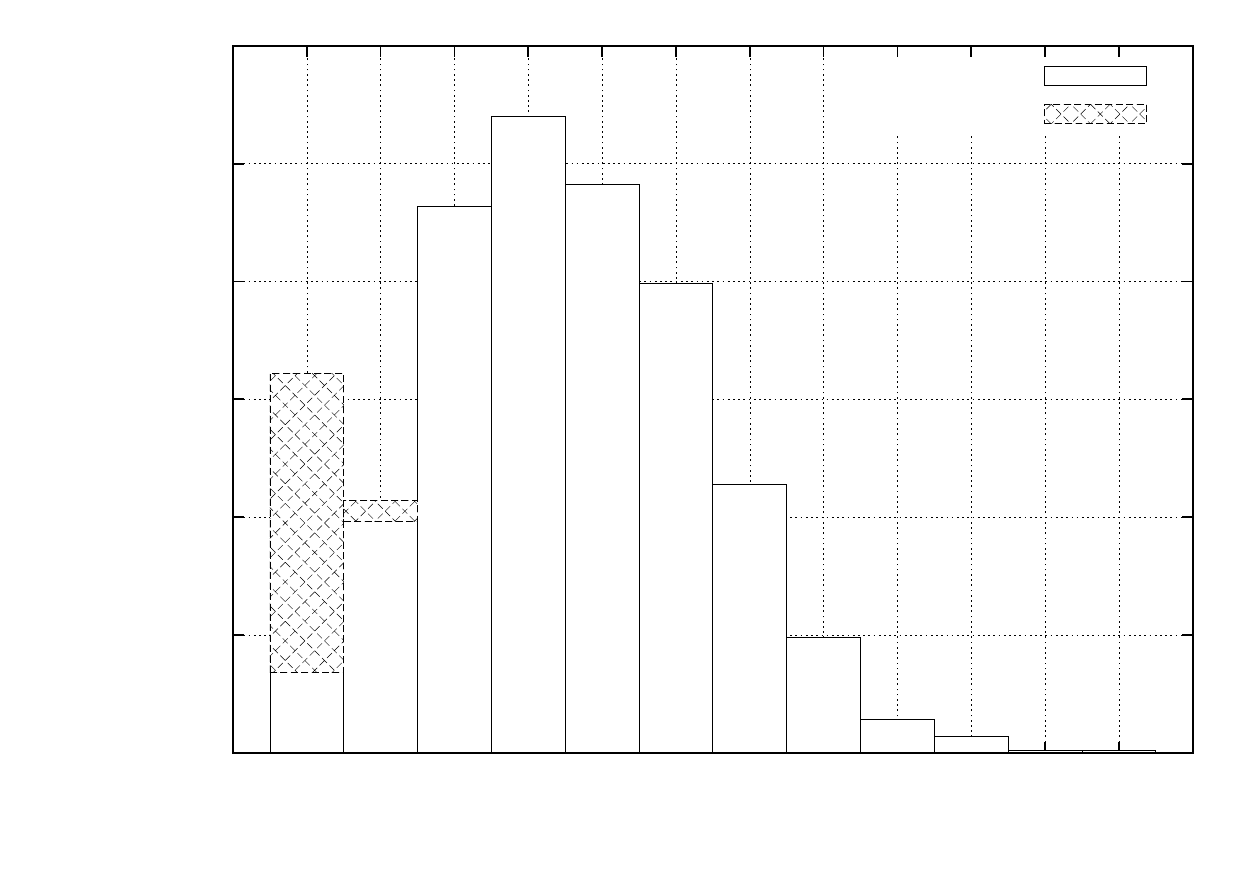}
\caption{Bar graph showing the number of indecomposable summands
of $A(\circ_G)$ for each group $G$ in Experiment A.}
\label{fig:blockstructure}
\end{figure}

\begin{figure}[!htbp]
\input{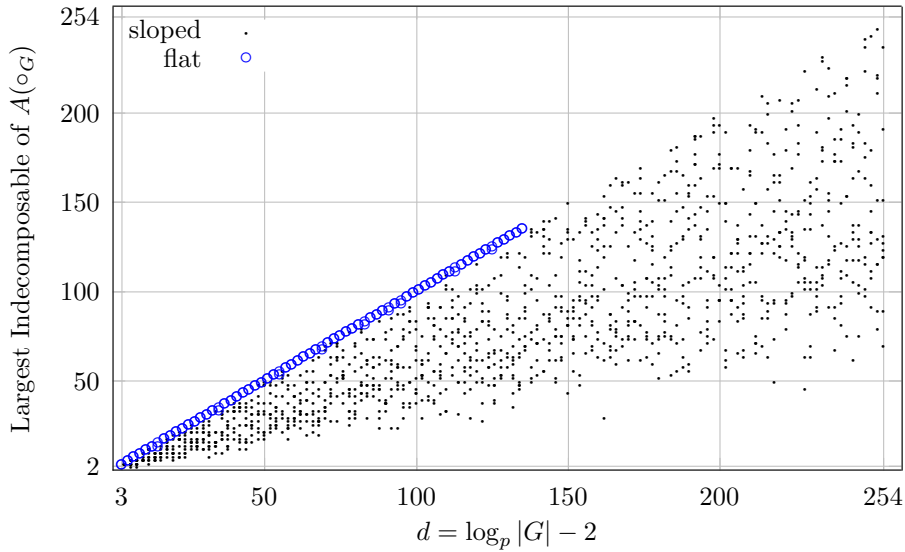}
\caption{Graph showing for each group $G$ in Experiment A
the dimension of the largest indecomposable summand for $A(\circ_G)$.}
\label{fig:blockstructure2}
\end{figure}

We conclude with some brief remarks concerning the
behavior of our implementation when given pairs of input groups that are unlikely
to be isomorphic. To explore this matter we conducted two additional experiments.
\smallskip

In Experiment B, we fixed $p=1021$ and even $d$ between $20$ and $40$,
and proceeded as in Experiment A to build a
$p$-group, $G$, of genus 2 with $d=\log_p|G|-2$ from a random pair of alternating forms.
We constructed a group $H$ from an 
independent pair of alternating forms. Unsurprisingly, none of the pairs of groups
in our experiment were isomorphic and our implementation quickly determined
this by showing that the adjoint algebras $A(\circ_G)$ and $A(\circ_H)$ 
were non-conjugate.

Experiment C was designed to produce non-isomorphic pairs
of groups having conjugate adjoint algebras. It proceeded as follows.
We built a random sloped $p$-group, $G$, of genus 2 as above using a pair
of alternating forms. (This time we used $p=3$ and let $d$ vary between 8 and 20.)
Next, we computed $A=A(\circ_G)$, formed the tensor product
$\Bbb{Z}_p^d\otimes_A\Bbb{Z}_p^d$, and produced another sloped pair
of forms by taking a random 2-dimensional projection of this tensor product.
By construction the group built from such a pair of forms has adjoint algebra $A$. 
Of the 21 pairs of groups
we built in this way, there were 6 isomorphic pairs. Of the remaining 15
non-isomorphic pairs, 9 were indistinguishable by any of the usual isomorphism invariants.
As the results in~\cite{LW} clearly demonstrate, though, we should not be surprised
by the failure of standard techniques to decide non-isomorphism in such a context.

%==============================================================================================

\section{Closing remarks}

We conclude with several observations about the results in this paper,
their connection to ongoing projects, and implications for possible future research.  
\bigskip

\noindent {\bf Counting.}~ We have expended considerable effort developing
theory for the groups of genus 2, and algorithms to decide isomorphism in
this class of groups. It seems natural to wonder, then, how many such groups there are.
The following result shows that the class is not polynomially bounded.

\begin{prop}
\label{prop:count}
The number of pairwise non-isoclinic groups of order $p^n$ that have genus 
$2$ over a field is $p^{n/2+\Theta(1)}$.
\end{prop}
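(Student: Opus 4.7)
The plan is to translate the counting problem, via Theorem~\ref{thm:det-method}, into counting $\PGammaL(2,\mathbb{F}_q)$-orbits of multisets of irreducible-power binary forms, and then to estimate both quantities directly.

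For the lower bound, for even $n$ set $m=(n-2)/2$ and, for each binary form $F(x,y)\in\mathbb{F}_p[x,y]$ of degree $m$, factor $F=\prod L_i^{e_i}$ into irreducible powers and apply Theorem~\ref{thm:indecomps}(i) to build a centrally indecomposable sloped bimap for each $L_i^{e_i}$; the central product gives a genus-2 group $G_F$ of order $p^n$ with $\Pf(\circ_{G_F})\equiv F\pmod{\mathbb{F}_p^{\times}}$. Concretely one may take $\circ_f$ represented by $\Phi_1=\left(\begin{smallmatrix}0&I_m\\-I_m&0\end{smallmatrix}\right)$, $\Phi_2=\left(\begin{smallmatrix}0&C_f\\-C_f^{\mathrm{tr}}&0\end{smallmatrix}\right)$ when $F$ is associated with the companion matrix of a monic polynomial $f$ of degree $m$. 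By Theorem~\ref{thm:det-method}, $G_F$ and $G_{F'}$ are isoclinic iff their multisets of Pfaffian factors are $\PGammaL(2,\mathbb{F}_p)$-equivalent. Since $|\PGammaL(2,\mathbb{F}_p)|=p(p-1)(p+1)<p^3$ and there are $\sim p^{m+1}$ binary forms of degree $m$, we get at least $p^{m-3}=p^{n/2-O(1)}$ non-isoclinic groups. The odd $n$ case is handled by absorbing one flat indecomposable of dimension $3$.

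For the upper bound, Theorem~\ref{thm:transitive}(a), Theorem~\ref{thm:det-method}, and Proposition~\ref{prop:pseudo-flat} together reduce the isoclinism invariant of a directly indecomposable genus-2 group over centroid residue field $\mathbb{F}_q=\mathbb{F}_{p^f}$ to the data of a $(s,t)$-decomposition into sloped and flat dimensions (with $(s+t+2)f=n$) together with a $\PGammaL(2,\mathbb{F}_q)$-orbit on multisets of irreducible-power binary forms of total degree $m=s/2$. The key estimate is that such multisets correspond to ``decorated divisors'' on $\mathbb{P}^1_{\mathbb{F}_q}$, whose generating function is
\[
F(z)=\prod_{x\in|\mathbb{P}^1_{\mathbb{F}_q}|}\prod_{k\geq1}\frac{1}{1-z^{k\deg x}}=\prod_{k\geq1}Z_{\mathbb{P}^1_{\mathbb{F}_q}}(z^k)=\prod_{k\geq1}\frac{1}{(1-z^k)(1-qz^k)}.
\]
Pole analysis at the dominant singularity $z=1/q$ gives $[z^m]F(z)\sim C(q)\,q^m$, where $C(q)=\prod_{k\geq 2}\frac{1}{(1-q^{-k})(1-q^{1-k})}\cdot\frac{1}{1-q^{-1}}$ depends only on $q$. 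Dividing by $|\PGammaL(2,\mathbb{F}_q)|=O(q^3\log q)$ and summing over admissible $(f,s,t)$, the dominant contribution is $f=1$, $s=n-2$, $t=0$, yielding $O(p^{n/2})$ total isoclinism classes.

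The main obstacle is the generating function estimate: a naive bound on multisets of irreducible-power forms picks up a sub-exponential factor $e^{O(\sqrt{m})}$ from integer partitions of multiplicities, which would only give $p^{n/2+O(\sqrt{n})}$ rather than $p^{n/2+\Theta(1)}$. The point is that this factor is absorbed into the constant $C(q)$ because the set of monic polynomials of degree $m$ with high-multiplicity irreducible factors has density vanishing like $1/q$ among all monic polynomials, so partition refinements contribute only a bounded multiplicative correction; verifying this via the product-of-zeta-values identity and its pole analysis at $z=1/q$ is the crux of the proof.
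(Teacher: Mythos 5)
Your proof is correct and takes essentially the same route as the paper: reduce via Theorem~\ref{thm:det-method} to counting pairs of alternating forms up to pseudo-isometry, split off the flat part, count canonical sloped pairs (equivalently, similarity classes of $\tfrac{s}{2}\times\tfrac{s}{2}$ matrices, equivalently your multisets of irreducible-power binary forms) modulo the $\GammaL(2,\mathbb{F}_q)$-action, and observe the maximum occurs at $q=p$ with everything sloped. The one substantive difference is that where the paper simply cites the Frobenius--Hall result that the number of similarity classes in $\M_{s/2}(\mathbb{F}_q)$ is $q^{s/2+o(1)}$, you reprove it via the product-of-zeta-values generating function and pole analysis at $z=1/q$; that is a valid, self-contained substitute and correctly shows the partition-multiplicity contribution is only a bounded constant $C(q)$. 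One small omission: for fixed flat dimension $t$ the isoclinism type also records a partition of $t$ into odd parts $>1$, so $(s,t)$ plus the sloped orbit is not quite a complete invariant; this contributes an extra factor of at most $2^{t/2}\leq p^{t/2}$, which is absorbed into the dominant term exactly as in the paper, so the stated bound is unaffected.
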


\begin{proof}
We have seen that a group of genus $2$ over a field $\mathbb{F}_q$ is determined, up
to isoclinism, by a pair $\{\Phi_1,\Phi_2\}$ of alternating forms of dimension $m$
over $\Bbb{F}_q$. For a group 
of order $p^n$ and genus $2$ 
it follows that
$n=(2m+2)\log_p q$. Furthermore, these can be written uniquely as an orthogonal 
sum sloped and flat components.  Let $m=s+f$ where $s$ is the dimension of 
the sloped factor.
 
To estimate the number of possibilities for flat portion we need only consider the dimensions of the flat 
indecomposable constituents, forming a partition of the total dimension, $f$.  
Furthermore, all flat indecomposables pairs of forms have odd dimension greater 
than $1$. so the number of flat indecomposables is at most the 
number of decompositions $f=\sum_i 2m_i+1$.  This is bounded by the number of 
partitions of $f/2$, and so is not more than $2^{f/2}$.

We now estimate the possibilities for the sloped portion.  We may assume
\[
\Psi_1=\begin{bmatrix} 0 & I_{s/2} \\ -I_{s/2} & 0 \end{bmatrix}~~\mbox{and}~~
\Psi_2=\begin{bmatrix} 0 & J \\ -J^{{\rm tr}} & 0 \end{bmatrix},
\]
where $J$ in generalized Jordan normal form.  
By a classical result of Frobenius and Hall, the number of Jordan forms  
(also the number of conjugacy classes in $\M_{s/2}(q)$) is $q^{s/2+o(1)}$.  
By Theorem~\ref{thm:det-method}
two sloped pairs determine isomorphic groups if, and only if, they 
are equivalent under the action of $\Gamma{\rm L}(2,\Bbb{F}_q)$. Hence, the total 
number of pairwise nonisomorphic sloped components is between $q^{s/2-4}$ and $q^{s/2}$.

The total number of pairwise nonisomorphic groups of genus $2$ and order $p^n$ is 
 maximized when $\Bbb{F}_q=\mathbb{Z}_p$
and $f\in O(1)$, resulting in the stated estimate.  
\end{proof}

We turn briefly to the degrees of permutation representations having groups of genus 
$2$ as a quotient. It transpires that if the centrally indecomposable terms have small 
degree representations as quotients then so does the entire group.

\begin{prop}
\label{prop:degree}
Let $G$ be a $p$-group of genus $2$ over a field $\mathbb{F}_{q}$ with fully refined central decomposition 
$\{G_1,\cdots,G_{\ell}\}$. Then $G$ has a faithful representation as a quotient of a permutation group in degree 
\begin{align*}
	\deg(G) & \leq \sum_{i=1}^{\ell} \deg(G_i), & \deg(G_i) & \leq \left\{\begin{array}{cc} 
	q^{2c_i \deg a_i(x)}, & H(\mathbb{F}_{q}[x]/(a_i(x)^{c_i}) )\twoheadrightarrow G_i;\\
	q^{2m+2}, & G_i\cong H_m^{\flat}(q).
	\end{array}\right.
\end{align*}
Furthermore, $|G|=q^{2s}\prod_{i=1}^{\ell} \deg (G_i)$, where $s$ the number of $G_i$ that are sloped.
\end{prop}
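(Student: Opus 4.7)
The plan is to reduce the degree bound to the centrally indecomposable case and then invoke Theorem~\ref{thm:indecomps} to build explicit permutation representations, treating the order identity by independent bookkeeping. For the reduction, since $\{G_1, \dots, G_\ell\}$ is a central decomposition the $G_i$ commute pairwise and jointly generate $G$, so the multiplication map $G_1 \times \cdots \times G_\ell \twoheadrightarrow G$ is surjective. If each $G_i$ is presented as a quotient of a permutation group acting on a set $X_i$ of cardinality $\deg(G_i)$, then the direct product of these permutation groups acts on $\bigsqcup_i X_i$ (of cardinality $\sum \deg(G_i)$) and maps onto $G$, giving the subadditivity $\deg(G) \le \sum \deg(G_i)$.

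For an indecomposable sloped $G_i$, Theorem~\ref{thm:indecomps}(i) together with Proposition~\ref{prop:isoclinic-isomorphic} identifies $G_i$ (under the running exponent-$p$ assumption) with a central quotient $H(R)/L$ of the Heisenberg group over $R = \mathbb{F}_q[x]/(a_i(x)^{c_i})$. The key construction is the identification $H(R) \cong R^2 \rtimes R$, where $R$ acts on $R^2$ by the unipotent shear $(e, w) \mapsto (e, w + ef)$ that realizes the Heisenberg commutator; this realizes $H(R)$ as a group of affine transformations of $R^2$ and hence, via the natural action on the set $R^2$, as a subgroup of $\mathrm{Sym}(R^2)$ of degree $|R|^2 = q^{2c_i \deg a_i}$. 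Composing with the quotient $H(R) \twoheadrightarrow G_i$ yields the stated bound. For flat $G_i \cong H_m^\flat(q)$, the defining matrix description acts faithfully on the column space $\mathbb{F}_q^{m+3}$, giving a permutation representation of degree $q^{m+3} \le q^{2m+2}$ for all $m \ge 1$ arising as nontrivial indecomposable summands.

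For the order identity, the decisive structural input is that $\dim_{\mathbb{F}_q}[G,G] = 2$ combined with $[G_i, G_j] = 1$ for $i \ne j$ forces $[G_i, G_i] = [G,G]$ for each $i$, so that $G/Z(G)$ decomposes as an $\mathbb{F}_q$-direct sum $\bigoplus_i G_i/Z(G_i)$. Using the orders supplied by Theorem~\ref{thm:indecomps}, namely $|G_i/Z(G_i)| = q^{2c_i \deg a_i}$ in the sloped case and $q^{2m_i + 1}$ in the flat case, together with $|Z(G)| = q^2$, one rearranges the resulting product to match the degree values $\deg(G_i) = q^{2c_i \deg a_i}$ (sloped) and $\deg(G_i) = q^{2m_i + 2}$ (flat), with the factor $q^{2s}$ absorbing the surplus contributions from the shared central overlaps. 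The main obstacle is precisely this last rearrangement: the combinatorics of overlaps in a fully-refined central decomposition must compose so that each sloped factor contributes a $q^2$ adjustment and each flat factor the correct complementary adjustment, which in my draft requires the deliberately generous flat degree bound $q^{2m+2}$ (rather than the tighter $q^{m+3}$ achievable by the matrix action) in order to make the product formula an equality rather than a mere inequality.
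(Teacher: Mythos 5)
Your reduction to the centrally indecomposable case and your treatment of the sloped factors coincide with the paper's argument: the affine action of $H(R)\cong R^2\rtimes R$ on the set $R^2$ is exactly right multiplication on the rows $(1,s,t)$, the stabilizers of $(1,0,0)$ and $(1,1,0)$ meet trivially, and the degree is $|R|^2=q^{2c_i\deg a_i(x)}$. (A small remark: the proposition builds the surjection $H(\mathbb{F}_q[x]/(a_i(x)^{c_i}))\twoheadrightarrow G_i$ into its hypothesis, so you need neither Proposition~\ref{prop:isoclinic-isomorphic} nor a ``running exponent-$p$ assumption''; no such assumption appears in the statement.)

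Two genuine gaps remain. First, the flat case: the matrices of $H^{\flat}_m$ are $(m+4)\times(m+4)$, so the natural module is $\mathbb{F}_q^{m+4}$, not $\mathbb{F}_q^{m+3}$, and acting on the whole column space gives degree $q^{m+4}$, which exceeds $q^{2m+2}$ when $m=1$. The repair is to act on a single orbit: the orbit of the last standard basis vector is $\{(z_1,z_2,f_0,\dots,f_m,1)^{\mathrm{tr}}\}$, of size $q^{m+3}$, and the point stabilizer $\langle M_1,\dots,M_m\rangle$ has trivial core (a nontrivial element $x$ of it fails to commute with some $N_b$ by nondegeneracy, and then $x^{N_b}=x[x,N_b]$ with $1\neq[x,N_b]\in\langle Z_1,Z_2\rangle$ cannot lie in $\langle M_1,\dots,M_m\rangle$), so this action is faithful and $q^{m+3}\le q^{2m+2}$ for every $m\ge 1$. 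Second, and more seriously, the order identity $|G|=q^{2s}\prod_i\deg(G_i)$ is not proved: your closing paragraph concedes that the ``rearrangement'' is the obstacle and then asserts that the factor $q^{2s}$ ``absorbs'' the central overlaps. That is not an argument. What is required is the explicit comparison of $|G|=|Z(G)|\cdot\prod_i|G_i/Z(G_i)|=q^{2}\prod_i q^{d_i}$, with $d_i=2c_i\deg a_i(x)$ in the sloped case and $d_i=2m_i+1$ in the flat case, against $q^{2s}\prod_i\deg(G_i)$ term by term; you must carry this computation out, since as written nothing in your draft establishes the identity. Relatedly, your claim that $\dim_{\mathbb{F}_q}[G,G]=2$ together with $[G_i,G_j]=1$ ``forces'' $[G_i,G_i]=[G,G]$ is not a valid deduction on its own; the correct reason each indecomposable factor has two-dimensional commutator is that its associated pair of forms is an indecomposable Kronecker pair of type (i) or (ii) and hence spans a two-dimensional space of forms.
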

\begin{proof}
If $H=H(K)$, $K$ a commutative ring, then the stabilizer of $(1,s,t)$ in $H$ is 
\[
\left\{\begin{bmatrix} 1 & 0 & fs \\ 0 & 1 & f\\ 0 & 0& 1\end{bmatrix} : f\in K\right\}.
\]
Hence, the stabilizers of $(1,0,0)$ and $(1,1,0)$ intersect trivially, so the permutation 
representation of $H$ on $\{(1,s,t): s,t\in K\}$ is
faithful and transitive of degree $2|K|$.  It follows, for each $1\leq i\leq\ell$, that
\[
\deg(H(\mathbb{F}_q[x]/(a_i(x)^{c_i})))\leq q^{2c_i \deg a_i(x)}.
\]
A similar estimate holds for flat indecomposables, but here the representation is regular.     
Since central products are quotients of direct product the claim follows.
\end{proof}
\bigskip

\noindent {\bf Groups of genus 2 over local rings.}~
We introduced the concept of genus over an
arbitrary Artinian commutative ring. Via Theorem~\ref{thm:centroid-direct} we obtained
an immediate reduction to Artinian local commutative rings. At that stage,
due largely to the considerable complications that already exist, we focused
on fields.  We note, however, that an alternative approach exists for arbitrary local rings.
If the centroid of a given group $G$ has a nontrivial radical we can use this to 
build a proper nontrivial characteristic subgroup of $G$.  This is done, for example, by the last author 
in \cite{Wilson:alpha}, thereby reducing to a smaller instance of isomorphism. Thus, there is no 
need to extend our methods to the local case.  Algorithms for just this task were developed 
by the second author for the computer algebra system {\sc Magma} in~\cite{Maglione:filter}, 
and their performance is on par with the performance of our algorithms here.
\bigskip

\noindent {\bf Groups of higher genus.}~ As suggested in Section~\ref{sec:adjten},
the adjoint-tensor method is designed to work in much greater 
generality than genus $2$. In~\cite{LW}, for example, a version of the algorithm handles isomorphism of all 
quotients of Heisenberg groups over fields in time $O((\log |G|)^6)$. What prevents 
us from saying more is that one cannot predict the complexity of group isomorphism 
for a class of groups by the
adjoint-tensor method without a priori knowledge of the associated adjoint rings.  In the case 
of quotients of Heisenberg groups $H_m(K)$, $K$ a local Artinian ring,  the adjoint rings
 are generically the same as the adjoint ring of $H_m(K)$, which is none other than 
$M_{2m}(K)$.  (This follows from  a Galois correspondence explained in \cite{BW:autotopism}).  
So long as this ring is manageable -- say, if the centroid has a decideable isomorphism problem and a 
bounded number of common weight spaces -- then some variation of our analysis still applies.  

Our point is that adjoint-tensor is a general technique that works exceedingly well when circumstances favor it,
and the computational effort required to decide when such circumstances exist is relatively inconsequential. 
Thus, it does no harm to attempt isomorphism by this method before resorting to the general 
techniques.  Our implementation includes some rudimentary estimation functions to predict the likelihood 
that our method will improve performance.  One might eventually analyze the complexity of adjoint-tensor
in the general case, but we caution it will become difficult if the centroid is an arbitrary local ring.

%%%%%
\section*{Acknowledgments}

The authors wish to thank R. Lipyanski and V. Serge{\u\i}{\v{c}}uk for their most helpful guidance
on wild and tame classification problems.

This work was partially supported by a grant from the Simons Foundation (\#281435 to Peter Brooksbank).

%------ BIBLIOGRAPHY ------%

\begin{bibdiv}
\begin{biblist}

\bib{BCGQ}{article}{
   author={Babai, L{\'a}szl{\'o}},
   author={Codenotti, Paolo},
   author={Grochow, Joshua A.},
   author={Qiao, Youming},
   title={Code equivalence and group isomorphism},
   conference={
      title={Proceedings of the Twenty-Second Annual ACM-SIAM Symposium on
      Discrete Algorithms},
   },
   book={
      publisher={SIAM, Philadelphia, PA},
   },
   date={2011},
   pages={1395--1408},
   review={\MR{2858409 (2012j:94191)}},
}
\bib{Baer:correspondence}{article}{
   author={Baer, Reinhold},
   title={Groups with abelian central quotient group},
   journal={Trans. Amer. Math. Soc.},
   volume={44},
   date={1938},
   number={3},
   pages={357--386},
   issn={0002-9947},
   review={\MR{1501972}},
  % doi={10.2307/1989886},
}

\bib{B-F}{article}{
  author = {Bayer-Fluckiger, Eva},
  title = {Principe de Hasse faible pour les syst\`{e}mes de formes quadratique},
  journal = {J. Reine Angew. Math.},
  date = {1987},
  number = {378},
  pages = {53--59},
  review={\MR{0895284 (88g:11015)}}
}

\bib{BDLST}{article}{
   author={Belitskii, Genrich},
   author={Dmytryshyn, Andrii R.},
   author={Lipyanski, Ruvim},
   author={Sergeichuk, Vladimir V.},
   author={Tsurkov, Arkady},
   title={Problems of classifying associative or Lie algebras over a field
   of characteristic not two and finite metabelian groups are wild},
   journal={Electron. J. Linear Algebra},
   volume={18},
   date={2009},
   pages={516--529},
   issn={1081-3810},
   review={\MR{2538621 (2010i:16024)}},
}
\bib{BLS}{article}{
   author={Belitskii, Genrich},
   author={Lipyanski, Ruvim},
   author={Sergeichuk, Vladimir},
   title={Problems of classifying associative or Lie algebras and triples of
   symmetric or skew-symmetric matrices are wild},
   journal={Linear Algebra Appl.},
   volume={407},
   date={2005},
   pages={249--262},
   issn={0024-3795},
   review={\MR{2161930 (2006i:17014)}},
   %doi={10.1016/j.laa.2005.05.007},
}
\bib{Blackburn}{article}{
   author={Blackburn, Simon R.},
   title={Groups of prime power order with derived subgroup of prime order},
   journal={J. Algebra},
   volume={219},
   date={1999},
   number={2},
   pages={625--657},
   issn={0021-8693},
   review={\MR{1706841 (2000i:20032)}},
   %doi={10.1006/jabr.1998.7909},
}
\bib{BNV:enum}{book}{
   author={Blackburn, Simon R.},
   author={Neumann, Peter M.},
   author={Venkataraman, Geetha},
   title={Enumeration of finite groups},
   series={Cambridge Tracts in Mathematics},
   volume={173},
   publisher={Cambridge University Press, Cambridge},
   date={2007},
   pages={xii+281},
   isbn={978-0-521-88217-0},
   review={\MR{2382539 (2009c:20041)}},
   %doi={10.1017/CBO9780511542756},
}
\bib{Bond}{article}{
   author={Bond, James},
   title={Lie algebras of genus one and genus two},
   journal={Pacific J. Math.},
   volume={37},
   date={1971},
   pages={591--616},
   issn={0030-8730},
   review={\MR{0308221 (46 \#7336)}},
}

\bib{magma}{article}{
 author = {Bosma, W.},
 author = {Cannon, J.},
 author = {Playoust, C.},
 title = {The Magma algebra system. I. The user language},
 journal = {J. Symbolic Comput.},
 volume = {24},
 number = {3-4},
 year = {1997},
 pages = {235--265},
 review = {\MR{1484478}}
}

\bib{BO}{article}{
   author={Brooksbank, Peter A.},
   author={O'Brien, Eamonn A.},
   title={Constructing the group preserving a system of forms},
   journal={Internat. J. Algebra Comput.},
   date = {2008},
   volume = {18},
   number = {2},
   pages = {227--241},
   review = {\MR{2403820 (2009g:20020)}}
} 

\bib{BOW}{article}{
   author={Brooksbank, Peter A.},
   author={O'Brien, Eamonn A.},
   author={Wilson, James B.},
   title={Testing isomorphism of $p$-groups},
   status={preprint},
}
   
\bib{BL:mod-iso}{article}{
   author={Brooksbank, Peter A.},
   author={Luks, Eugene M.},
   title={Testing isomorphism of modules},
   journal={J. Algebra},
   volume={320},
   date={2008},
   number={11},
   pages={4020--4029},
   issn={0021-8693},
   review={\MR{2464805 (2009h:16001)}},
%   doi={10.1016/j.jalgebra.2008.07.014},
}
   
\bib{BW:mod-iso}{article}{
   author={Brooksbank, Peter A.},
   author={Wilson, James B.},
   title={The module isomorphism problem reconsidered},
   journal={J. Algebra},
   number={421},
   pages={541--559},
   year={2015},
   review={\MR{3272396}}
}

\bib{BW:autotopism}{article}{
   author={Brooksbank, Peter A.},
   author={Wilson, James B.},
   title={Groups acting on tensor products},
   journal={J. Pure Appl. Algebra},
   volume={218},
   date={2014},
   number={3},
   pages={405--416},
   issn={0022-4049},
   review={\MR{3124207}},
%   doi={10.1016/j.jpaa.2013.06.011},
}

\bib{BW:slope}{article}{
   author={Brooksbank, Peter A.},
   author={Wilson, James B.},
   title={Intersecting two classical groups},
   journal={J. Algebra},
   volume={353},
   date={2012},
   pages={286--297},
   review={\MR{2872448}}
}

\bib{BW:isom}{article}{
   author={Brooksbank, Peter A.},
   author={Wilson, James B.},
   title={Computing isometry groups of Hermitian maps},
   journal={Trans. Amer. Math. Soc.},
   volume={364},
   date={2012},
   number={4},
   pages={1975--1996},
   issn={0002-9947},
   review={\MR{2869196}},
  % doi={10.1090/S0002-9947-2011-05388-2},
}

\bib{Camina}{article}{
  author={Camina, Rachel},
  title={The Nottingham group},
  journal={New horizons in pro-$p$ groups},
  series={Progr. Math.},
  number={184},
  pages={205--221},
  year={2000}
}

\bib{CH}{article}{
   author={Cannon, John J.},
   author={Holt, Derek F.},
   title={Automorphism group computation and isomorphism testing in finite
   groups},
   journal={J. Symbolic Comput.},
   volume={35},
   date={2003},
   number={3},
   pages={241--267},
   issn={0747-7171},
   review={\MR{1962794 (2004c:20035)}},
   %doi={10.1016/S0747-7171(02)00133-5},
}
\bib{CdGS}{article}{
   author={Cical{\`o}, Serena},
   author={de Graaf, Willem A.},
   author={Schneider, Csaba},
   title={Six-dimensional nilpotent Lie algebras},
   journal={Linear Algebra Appl.},
   volume={436},
   date={2012},
   number={1},
   pages={163--189},
   issn={0024-3795},
   review={\MR{2859920}},
  % doi={10.1016/j.laa.2011.06.037},
}
\bib{Dieudonne}{article}{
   author={Dieudonn{\'e}, Jean},
   title={Sur la r\'eduction canonique des couples de matrices},
%   language={French},
   journal={Bull. Soc. Math. France},
   volume={74},
   date={1946},
   pages={130--146},
   issn={0037-9484},
   review={\MR{0022826 (9,264f)}},
}
\bib{ELGOB}{article}{
   author={Eick, Bettina},
   author={Leedham-Green, C. R.},
   author={O'Brien, E. A.},
   title={Constructing automorphism groups of $p$-groups},
   journal={Comm. Algebra},
   volume={30},
   date={2002},
   number={5},
   pages={2271--2295},
   issn={0092-7872},
   review={\MR{1904637 (2003d:20027)}},
   %doi={10.1081/AGB-120003468},
}
\bib{FV}{article}{
   author={Falcone, Giovanni},
   author={Vaccaro, M. Alessandra},
   title={Kronecker modules and reductions of a pair of bilinear forms},
   journal={Acta Univ. Palack. Olomuc. Fac. Rerum Natur. Math.},
   volume={43},
   date={2004},
   pages={55--60},
   issn={0231-9721},
   review={\MR{2124602 (2006f:11039)}},
}
\bib{Fin}{article}{
   author={Finogenov, A. A.},
   title={On finite $p$-groups with a cyclic commutator group and cyclic
   center},
%   language={Russian, with Russian summary},
   journal={Mat. Zametki},
   volume={63},
   date={1998},
   number={6},
   pages={911--922},
   issn={0025-567X},
   translation={
      journal={Math. Notes},
      volume={63},
      date={1998},
      number={5-6},
      pages={802--812},
      issn={0001-4346},
   },
   review={\MR{1679224 (2000a:20044)}},
 %  doi={10.1007/BF02312775},
}
\bib{Gabriel}{article}{
   author={Gabriel, Peter},
   title={Appendix: degenerate bilinear forms},
   journal={J. Algebra},
   volume={31},
   date={1974},
   pages={67--72},
   issn={0021-8693},
   review={\MR{0347868 (50 \#369)}},
}
	
\bib{GG}{article}{
   author={Goldstein, Daniel},
   author={Guralnick, Robert M.},
   title={Alternating forms and self-adjoint operators},
   journal={J. Algebra},
   volume={308},
   date={2007},
   number={1},
   pages={330--349},
   review={\MR{2290925 (2008b:20050)}}
}
\bib{HL}{article}{
   author={Heineken, Hermann},
   author={Liebeck, Hans},
   title={The occurrence of finite groups in the automorphism group of
   nilpotent groups of class $2$},
   journal={Arch. Math. (Basel)},
   volume={25},
   date={1974},
   pages={8--16},
   issn={0003-889X},
   review={\MR{0349844 (50 \#2337)}},
}

\bib{HoltEO}{book}{
  author = {Holt, Derek F.},
  author = {Eick, Bettina},
  author = {O'Brien, Eamonn A.},
  title = {Handbook of computational group theory},
  series = {Discrete Mathematics and its Applications},
  publisher = {Chapman \& Hall/CRC, Boca Raton},
  date = {2005},
  isbn = {1-58488-372-3},
  pages = {xvi+514},
  review = {\MR{2129747 (2006f:20001)}},
}

\bib{KL:quo}{article}{
 author = {Kantor, W. M.},
 author={Luks, E. M.},
 title = {Computing in Quotient Groups},
 booktitle = {Proceedings of the Twenty-second Annual ACM Symposium on Theory of Computing},
 series = {STOC '90},
 year = {1990},
 %location = {Baltimore, Maryland, USA},
 pages = {524--534},
 %numpages = {11},
 publisher = {ACM},
 address = {New York, NY, USA},
} 

\bib{Knebelman:genus}{article}{
   author={Knebelman, M. S.},
   title={Classification of Lie algebras},
   journal={Ann. of Math. (2)},
   volume={36},
   date={1935},
   number={1},
   pages={46--56},
   issn={0003-486X},
   review={\MR{1503207}},
   %doi={10.2307/1968663},
}

\bib{LGMc}{book}{
   author={Leedham-Green, C. R.},
   author={McKay, S.},
   title={The structure of groups of prime power order},
   series={London Mathematical Society Monographs. New Series},
   volume={27},
   note={Oxford Science Publications},
   publisher={Oxford University Press, Oxford},
   date={2002},
   pages={xii+334},
   isbn={0-19-853548-1},
   review={\MR{1918951 (2003f:20028)}},
}

\bib{collection}{article}{
	author={Leedham-Green, C. R.},
	author={Soicher, Leonard H.},
	title={Collection from the left and other strategies},
	series={Computational group theory, Part 1.},
	journal={J. Symbolic Comput.},
	number={9},
	year={1990},
	pages={5--6,665--675},
	review={\MR{1075430 (92b:20021)}},
}

\bib{deep-thought}{article}{
	author={Leedham-Green, C. R.},
	author={Soicher, Leonard H.},
	title={Symbolic collection using Deep Thought},
	publisher={London Mathematical Society},
	journal={LMS J. Comput. Math.},
	number={1},
	pages={9--24},
	year={1998},
	review={\MR{1635719 (99f:20002)}},
}

\bib{Leong}{article}{
   author={Leong, Y. K.},
   title={Odd order nilpotent groups of class two with cyclic centre},
   note={Collection of articles dedicated to the memory of Hanna Neumann,
   VI},
   journal={J. Austral. Math. Soc.},
   volume={17},
   date={1974},
   pages={142--153},
   issn={0263-6115},
   review={\MR{0347972 (50 \#470)}},
}
	
\bib{LW}{article}{
   author={Lewis, Mark L.},
   author={Wilson, James B.},
   title={Isomorphism in expanding families of indistinguishable groups},
   journal={Groups Complex. Cryptol.},
   volume={4},
   date={2012},
   number={1},
   pages={73--110},
   issn={1867-1144},
   review={\MR{2921156}},
   %doi={10.1515/gcc-2012-0008},
}

\bib{LV}{article}{
   author={Lipyanski, Ruvim},
   author={Vanetik, Natalia},
   title={On Borel complexity of the isomorphism problems for graph related
   classes of Lie algebras and finite p-groups},
   journal={J. Algebra Appl.},
   volume={14},
   date={2015},
   number={5},
   pages={1550078 (15 pages)},
   issn={0219-4988},
   review={\MR{3323339}},
  % doi={10.1142/S0219498815500784},
}

\bib{Luks:mat}{article}{
	author={Luks, E. M.},
	title={Computing in solvable matrix groups},
	 journal={33rd Annual Symposium on Foundations of Computer Science, Pittsburgh, Oct. 24?27},
	 year={1992}, 
	 address={IEEE Computer Soc. Press, Los Alamitos, Calif.}, 
	 pages={111--120},
}

\bib{LM:normalizer}{article}{
   author={Luks, E.M.},
   author={Miyazaki, T.},
   title={Polynomial-time normalizers},
   journal={Discrete Math. Theor. Comput. Sci.},
   number={13},
   date={2011},
   number={4},
   pages={61--96},
   review={\MR{2862561} (20010)}
}

\bib{Maglione:auto}{article}{
	author={Maglione, Joshua},
	title={Automorphisms of small prime power groups},
	note={(arXiv:1505.03881)},
	year={2015},
}

\bib{Maglione:filter}{article}{
	author={Maglione, Joshua},
	title={Longer nilpotent series for classical unipotent groups},
        journal = {J. Grp. Theory},
        volume={18},
        number={4},
	year={2015},
	pages={569--585},
}

\bib{Miller}{article}{
title = {Graph isomorphism, general remarks },
journal = {Journal of Computer and System Sciences},
volume = {18},
number = {2},
pages = {128 - 142},
year = {1979},
author = {Miller, Gary L.},
}
\bib{Milnor-Husemoller}{book}{
   author={Milnor, John},
   author={Husemoller, Dale},
   title={Symmetric bilinear forms},
   note={Ergebnisse der Mathematik und ihrer Grenzgebiete, Band 73},
   publisher={Springer-Verlag, New York-Heidelberg},
   date={1973},
   pages={viii+147},
   review={\MR{0506372 (58 \#22129)}},
}
\bib{Myasnikov}{article}{
   author={Myasnikov, A. G.},
   title={The theory of models of bilinear mappings},
%   language={Russian},
   journal={Sibirsk. Mat. Zh.},
   volume={31},
   date={1990},
   number={3},
   pages={94--108, 217},
   issn={0037-4474},
   translation={
      journal={Siberian Math. J.},
      volume={31},
      date={1990},
      number={3},
      pages={439--451 (1991)},
      issn={0037-4466},
   },
   review={\MR{1084765 (92g:03060)}},
  % doi={10.1007/BF00970351},
}

\bib{Morozov}{article}{
   author={Morozov, V. V.},
   title={Classification of nilpotent Lie algebras of sixth order},
%   language={Russian},
   journal={Izv. Vys\v s. U\v cebn. Zaved. Matematika},
   volume={1958},
   date={1958},
   number={4 (5)},
   pages={161--171},
   issn={0021-3446},
   review={\MR{0130326 (24 \#A190)}},
}
\bib{OBrien}{article}{
   author={O'Brien, E. A.},
   title={Isomorphism testing for $p$-groups},
   journal={J. Symbolic Comput.},
   volume={17},
   date={1994},
   number={2},
   pages={131, 133--147},
   issn={0747-7171},
   review={\MR{1283739 (95f:20040b)}},
   %doi={10.1006/jsco.1994.1007},
}
\bib{Rabin}{article}{
   author={Rabin, Michael O.},
   title={Recursive unsolvability of group theoretic problems},
   journal={Ann. of Math. (2)},
   volume={67},
   date={1958},
   pages={172--194},
   issn={0003-486X},
   review={\MR{0110743 (22 \#1611)}},
}
\bib{Scharlau}{article}{
   author={Scharlau, Rudolf},
   title={Paare alternierender Formen},
   journal={Math. Z.},
   volume={147},
   date={1976},
   number={1},
   pages={13--19},
   issn={0025-5874},
   review={\MR{0419484 (54 \#7505)}},
}

\bib{Ruth2}{thesis}{
  author={Schwingel, Ruth},
  title = {Two matrix group algorithms with applications to computing the
  automorphism group of a finite $p$-group},
  school={PhD thesis, Queen Mary, University of London},
  year={2000}
  }

\bib{Sergeichuk}{article}{
   author={Serge{\u\i}{\v{c}}uk, V. V.},
   title={The classification of metabelian $p$-groups},
%   language={Russian},
   conference={
      title={Matrix problems (Russian)},
   },
   book={
      publisher={Akad. Nauk Ukrain. SSR Inst. Mat., Kiev},
   },
   date={1977},
   pages={150--161},
   review={\MR{0491938 (58 \#11109)}},
}
\bib{Seress}{book}{
   author={Seress, {\'A}kos},
   title={Permutation group algorithms},
   series={Cambridge Tracts in Mathematics},
   volume={152},
   publisher={Cambridge University Press, Cambridge},
   date={2003},
   pages={x+264},
   isbn={0-521-66103-X},
   review={\MR{1970241 (2004c:20008)}},
   %doi={10.1017/CBO9780511546549},
}

\bib{vzG}{book}{
   author={von zur Gathen, Joachim},
   author={Gerhard, J{\"u}rgen},
   title={Modern computer algebra},
   edition={2},
   publisher={Cambridge University Press, Cambridge},
   date={2003},
   pages={xiv+785},
   isbn={0-521-82646-2},
   review={\MR{2001757 (2004g:68202)}},
}

\bib{Vish:1}{article}{
   author={Vishnevetski{\u\i}, A. L.},
   title={Groups of class $2$ and exponent $p$ with commutant of order
   $p^{2}$},
%   language={Russian, with English summary},
   journal={Dokl. Akad. Nauk Ukrain. SSR Ser. A},
   date={1980},
   number={9},
   pages={9--11, 103},
   issn={0201-8446},
   review={\MR{593560 (82d:20026)}},
}
\bib{Vish:2}{article}{
   author={Vishnevetski{\u\i}, A. L.},
   title={A system of invariants of certain groups of class $2$ with
   commutator subgroup of rank two},
%   language={Russian},
   journal={Ukrain. Mat. Zh.},
   volume={37},
   date={1985},
   number={3},
   pages={294--300, 403},
   issn={0041-6053},
   review={\MR{795568 (86k:20033)}},
}
\bib{Wilson:unique-cent}{article}{
   author={Wilson, James B.},
   title={Decomposing $p$-groups via Jordan algebras},
   journal={J. Algebra},
   volume={322},
   date={2009},
   number={8},
   pages={2642--2679},
   issn={0021-8693},
   review={\MR{2559855 (2010i:20016)}},
   %doi={10.1016/j.jalgebra.2009.07.029},
}

\bib{Wilson:division}{article}{
   author={Wilson, James B.},
   title={Division, adjoints, and dualities of bilinear maps},
   journal={Comm. Algebra},
   volume={41},
   date={2013},
   number={11},
   pages={3989--4008},
   issn={0092-7872},
   review={\MR{3169502}},
 %  doi={10.1080/00927872.2012.660668},
}

\bib{Wilson:RemakI}{article}{
   author={Wilson, James B.},
   title={Existence, algorithms, and asymptotics of direct product
   decompositions, I},
   journal={Groups Complex. Cryptol.},
   volume={4},
   date={2012},
   number={1},
   pages={33--72},
   issn={1867-1144},
   review={\MR{2921155}},
  % doi={10.1515/gcc-2012-0007},
}

\bib{Wilson:find-central}{article}{
   author={Wilson, James B.},
   title={Finding central decompositions of $p$-groups},
   journal={J. Group Theory},
   volume={12},
   date={2009},
   number={6},
   pages={813--830},
   issn={1433-5883},
   review={\MR{2582050 (2011a:20044)}},
   %doi={10.1515/JGT.2009.015},
}

\bib{Wilson:alpha}{article}{
   author={Wilson, James B.},
   title={More characteristic subgroups, Lie rings, and isomorphism tests
   for $p$-groups},
   journal={J. Group Theory},
   volume={16},
   date={2013},
   number={6},
   pages={875--897},
   issn={1433-5883},
   review={\MR{3198722}},
   %doi={10.1515/jgt-2013-0026},
}

\end{biblist}
\end{bibdiv}

\end{document}